\newcommand{\rev}[1]{{\color{black}#1}}
\newcommand{\re}{\mathbb{R}}
\newcommand{\mR}{\mathbb{R}}
\newcommand{\mC}{\mathbb{C}}
\newcommand{\cpx}{\mathbb{C}}
\newcommand{\N}{\mathbb{N}}
\newcommand{\diag}{\mbox{diag}}
\newcommand{\nn}{\nonumber}
\def\af{\alpha}
\def\rank{\mbox{rank}}
\newcommand{\sig}{\sigma}
\newcommand{\Sig}{\Sigma}
\newcommand{\reff}[1]{(\ref{#1})}
\newcommand{\mc}[1]{\mathcal{#1}}
\newcommand{\supp}[1]{\mbox{supp}(#1)}
\newcommand{\qmod}[1]{\mbox{QM}[#1]}
\newcommand{\ideal}[1]{\mbox{Ideal}[#1]}
\newcommand{\st}{\mathit{s.t.}}
\newcommand{\bdes}{\begin{description}}
	\newcommand{\edes}{\end{description}}
\newcommand{\bal}{\begin{align}}
	\newcommand{\eal}{\end{align}}
\newcommand{\bnum}{\begin{enumerate}}
	\newcommand{\enum}{\end{enumerate}}
\newcommand{\bit}{\begin{itemize}}
	\newcommand{\eit}{\end{itemize}}
\newcommand{\bea}{\begin{eqnarray}}
	\newcommand{\eea}{\end{eqnarray}}
\newcommand{\be}{\begin{equation}}
	\newcommand{\ee}{\end{equation}}
\newcommand{\baray}{\begin{array}}
	\newcommand{\earay}{\end{array}}
\newcommand{\bsry}{\begin{subarray}}
	\newcommand{\esry}{\end{subarray}}
\newcommand{\bca}{\begin{cases}}
	\newcommand{\eca}{\end{cases}}
\newcommand{\bcen}{\begin{center}}
	\newcommand{\ecen}{\end{center}}
\newcommand{\bbm}{\begin{bmatrix}}
	\newcommand{\ebm}{\end{bmatrix}}
\newcommand{\bmx}{\begin{matrix}}
	\newcommand{\emx}{\end{matrix}}
\newcommand{\bpm}{\begin{pmatrix}}
	\newcommand{\epm}{\end{pmatrix}}
\newcommand{\btab}{\begin{tabular}}
	\newcommand{\etab}{\end{tabular}}
\newcommand{\ter}[1]{\textcolor{red}{#1}}
\theoremstyle{plain}
\newtheorem{theorem}{Theorem}[section]
\newtheorem{prop}[theorem]{Proposition}
\newtheorem{lem}[theorem]{Lemma}
\newtheorem*{claim*}{Claim}
\newtheorem{thm}[theorem]{Theorem}
\theoremstyle{definition}
\newtheorem{exm}[theorem]{Example}
\numberwithin{equation}{section}
\numberwithin{table}{section}
\begin{document}
%\title[Finite convergence of the Moment-SOS relaxations...]
%{Finite convergence of the Moment-SOS relaxations for generalized moment problem}

\title[Finite convergence of the matrix Moment-SOS hierarchy]
{Finite convergence of the Moment-SOS hierarchy for polynomial matrix optimization}

\author[Lei Huang]{Lei Huang}
\address{Lei Huang,Department of Mathematics,
	University of California San Diego,
	9500 Gilman Drive, La Jolla, CA, USA, 92093.}
\email{huanglei@lsec.cc.ac.cn}

\author[Jiawang Nie]{Jiawang~Nie}
\address{Jiawang Nie,  Department of Mathematics,
University of California San Diego,
9500 Gilman Drive, La Jolla, CA, USA, 92093.}
\email{njw@math.ucsd.edu}

%\author[Ya-Xiang Yuan]{Ya-Xiang Yuan}
%\address{Ya-Xiang Yuan,
 %Institute of Computational Mathematics and Scientific/Engineering Computing, Academy of Mathematics and Systems Science, Chinese Academy of Sciences, Beijing, China, 100049.}
%\email{yyx@lsec.cc.ac.cn}

\subjclass[2020]{65K05,, 90C22,90C23 90C26}

\keywords{polynomial optimization, homogenization,
Moment-SOS relaxations, optimality conditions}

\maketitle

%\bit
%\item Corresponding Author: Lei Huang
%\item Affiliation:   Institute of Computational Mathematics and Scientific/Engineering Computing, Academy of Mathematics and Systems Science, Chinese Academy of Sciences,  and School of Mathematical Sciences, University of Chinese Academy of Sciences, Beijing, China, 100049
%\item E-mail Address: huanglei@lsec.cc.ac.cn

%\eit

\begin{abstract}
This paper studies the matrix Moment-SOS hierarchy for
solving polynomial matrix optimization.
Our first result is to show the   finite convergence of this hierarchy,
if the nondegeneracy condition, strict complementarity condition and
second order sufficient condition hold at every minimizer,
under the  \rev{Archimedean property}.
A useful criterion for detecting the finite convergence is the flat truncation.
Our second result is to show that every minimizer of the moment relaxation
must have a flat truncation when the relaxation order is big enough,
under the above mentioned optimality conditions.
These results give connections between
nonlinear semidefinite optimization theory
and Moment-SOS methods for solving polynomial matrix optimization.

\end{abstract}

\section{Introduction}

We consider the polynomial matrix optimization (PMO)
\be  \label{nsdp}
\left\{ \baray{rl}
\min & f(x)  \\
\st &  	G(x)\succeq 0, \\
\earay \right.
\ee
where $f(x)$ is a polynomial in $x:=(x_1,\dots,x_n)$
with real coefficients and $G(x)$ is an $m\times m$ symmetric  polynomial matrix in $x$.
%\ter{Let $S(G)$ be the feasible set of \reff{nsdp}}.
The PMO contains a broad class of important optimization problems
\cite{dehe,glss2,glss,HHP15,hdlb,hdlb2,HDLJ,klms,schhol,yzgf}.
%
%we refer to the survey \cite{cgcon} for more details.
%

A standard approach for solving \reff{nsdp} globally is the \rev{matrix}
Moment-sum-of-squares (SOS) hierarchy introduced in \cite{hdlb,schhol}.
\rev{It consists of a sequence of nested semidefinite programming (SDP) relaxations, whose definition requires introducing some notation.}

A polynomial $p$ is said to be an SOS if
$p=p_1^2+\dots+p_s^2$ for $p_1,\dots,p_s \in \mathbb{R}[x]$ \rev{and  some $s \in \mathbb{N}$.}
Denote by $\Sigma[x]$ the set of all SOS polynomials.
The quadratic module of scalar polynomials generated
by $G(x)$ is the set 
\rev{\be \label{qm1}
	\qmod{G}  \coloneqq  \Big \{
	\sigma+\sum_{t=1}^r v_t^TGv_t \mid
	\sigma \in \Sigma[x],~v_t \in \mathbb{R}[x]^m,~r\in \N
	\Big \} \subseteq \mR[x].
	\ee}
\rev{The quadratic module $\qmod{G}$ is said to be \rev{Archimedean}, if  there exists
	$R >0$ such that $R-\|x\|^2 \in \qmod{G}$.}	For an integer $k$, its  $2k$th degree truncation $\mathrm{QM}[G]_{2 k} \subseteq \mathbb{R}[x]$
is the set of all polynomials that can be represented as above, with $\deg(\sigma) \leq 2 k$
and $\deg(v_t^T G v_t)\leq 2k$ $(t=1,\dots,r)$.
The $k$th order SOS relaxation of \reff{nsdp} is
\be \label{sos}
\left\{\begin{array}{cl}
	\max & \gamma \\
	\text { s.t. } & f-\gamma \in \mathrm{QM}[G]_{ 2k} .
\end{array}\right.
\ee
The optimal value of \reff{sos} is the SOS bound of order $k$,
which we denote by $f_{k,sos}$.

Denote the power set
$
\mathbb{N}_{2k}^n  \coloneqq \left\{\alpha \in \mathbb{N}^n :
|\af| \leq 2k\right\}
$
(see Section~\ref{sec:notation} for details of the notation).
For a truncated multi-sequence $y \in \re^{ \mathbb{N}_{2k}^n }$,
it defines the Riesz functional
$\mathscr{L}_{y}$ acting on \rev{$\mathbb{R}[x]_{ 2k}$} as
\begin{equation} \nonumber
	\mathscr{L}_{y}(\sum_{\alpha \in \mathbb{N}_{2k}^n} p_{\alpha} x^{\alpha})
	\,  \coloneqq  \, \sum_{\alpha \in \mathbb{N}_{2k}^n} p_{\alpha} y_{\alpha}.
\end{equation}
The dual optimization of \reff{sos} is the $k$th order moment relaxation
\be  \label{mom}
\left\{ \baray{cl}
\min\limits_{y \in \mathbb{R}^{ \mathbb{N}_{2k}^n }} &  \mathscr{L}_{y}(f)  \\
\st  &  \mathscr{L}_{y}(1)=1,  \\
&  \mathscr{L}_{y}(p)\geq 0,~\forall p\in \qmod{G}_{2k}.
\earay \right.
\ee
We denote by $f_{k,mom}$  the optimal value of \reff{mom}, which is the moment bound of order $k$. 
For $k=1,2, \ldots$, the sequence of relaxations \reff{sos}-\reff{mom}
is called the matrix Moment-SOS hierarchy.
We refer to Section~\ref{sec:momrel} for more details about the above notation.

\rev{Let $S(G)=\left\{x \in \mathbb{R}^n: G(x) \succeq 0\right\}$ denote the feasible set of $(\ref{nsdp})$}, and
let $f_{\min}$ denote the optimal value of \reff{nsdp}.
Then, it holds that $f_{k,sos}\leq f_{k,mom}$ by weak duality and the following monotonicity relation
\[
\cdots \le f_{k,sos} \leq f_{k+1,sos} \leq \cdots \leq f_{\min} .
\]
The hierarchy \reff{sos}-\reff{mom} is said to have  {\it asymptotic convergence} if
$f_{k,sos}  \rightarrow  f_{\min}$ as $k\rightarrow \infty$, and it has {\it finite convergence}
if there \rev{exists} an order $N>0$ such that $f_{k,sos}=f_{\min}$ for all $k \ge N$, \rev{which of course implies $f_{k,mom}=f_{\min }$.}

When the polynomial matrix $G(x)$ is diagonal, the problem \reff{nsdp}
reduces to the  \rev{scalar} polynomial optimization problem
and the hierarchy of \reff{sos}-\reff{mom} becomes
the Moment-SOS hierarchy introduced by Lasserre \cite{Las01}.
For classical polynomial optimization, there are rich convergence results
for the Moment-SOS hierarchy. Under the \rev{Archimedean property},
%(i.e., there exists an positive  integer $N$ such that $N-\|x\|^2\in \qmod{G}  $),
Lasserre \cite{Las01} proved the asymptotic convergence for the Moment-SOS hierarchy.
Later, Nie \cite{nieopcd} proved that
the Moment-SOS hierarchy has finite convergence
if further some optimality conditions
(i.e., the linear independence constraint qualification,
strict complementarity and second order sufficient conditions)
hold at every minimizer\footnote{Throughout this paper, a minimizer means a global minimizer,
	unless its meaning is otherwise specified.}.
\rev{Moreover}, these optimality conditions are shown to hold
for generic polynomials \cite{huang2023,nieopcd}.
\rev{When the polynomial constraints contain some polynomial equations with a finite set of common solutions},
the Moment-SOS hierarchy  also  has finite convergence,
as shown in \cite{LLR08,Lau07,Nie13}.
For convex polynomial optimization,
the Moment-SOS hierarchy has finite convergence under the strict convexity
or sos-convexity assumption \cite{dKlLau11,Las09}.
%For homogeneous polynomial optimization with sphere constraints,
%the finite convergence holds under certain assumptions \cite{hl}.
For polynomial optimization \rev{where the  feasible set is  unbounded}
(the \rev{Archimedean property} fails to hold for such cases),
the Moment-SOS hierarchy based on homogenization
also has finite convergence under similar
optimality condition assumptions \cite{hny,hny3,hny2}.
%
%We refer to \cite{mar06,mar09} for more results on finite convergence.
%

Despite the rich work for scalar  polynomial optimization,
there is relatively less work for the matrix Moment-SOS hierarchy.
Under the \rev{Archimedean property}, this  hierarchy
has asymptotic convergence, as shown in \cite{hdlb,schhol}.
However, very little is known on the finite convergence  of the matrix Moment-SOS hierarchy
of \reff{sos}-\reff{mom}, to the best of the authors' knowledge.

This paper studies the finite convergence theory of
the  Moment-SOS hierarchy of \reff{sos}-\reff{mom},
for solving the  polynomial matrix optimization \reff{nsdp}.
To this end, we first introduce the nondegeneracy condition
in nonlinear semidefinite optimization.

%\subsection{Nondegeneracy condition in nonlinear semidefinite optimization}
\subsection{Nondegeneracy condition}
\label{seccq}

For an $m$-by-$m$ symmetric polynomial matrix
%matrix-valued function
$
G  = ( G_{st}(x))_{s,t=1,\ldots,m},
$
the derivative of $G$ at $x$ is the linear mapping
$\nabla G(x): \mathbb{R}^n \rightarrow \mathcal{S}^m$
such that
\[
d \, \coloneqq \, \left(d_1, \ldots, d_n\right) \, \mapsto \,
\nabla G(x)[d] \, \coloneqq \, \sum_{i=1}^n d_i \nabla_{x_i} G(x),
\]
where $\mathcal{S}^{m}$ denotes
the set of all $m$-by-$m$ real symmetric matrices and
$\nabla_{x_i} G(x) $ denotes the partial derivative of $G$
with respect to $x_i$, i.e.,
\[
\nabla_{x_i} G(x) \, = \, \left(\frac{\partial G_{st}}{\partial x_i}\right)_{s,t=1,\dots,m}.
\]
The adjoint of $\nabla G(x)$ is the linear mapping
$\nabla G(x)^*: \mathcal{S}^m \rightarrow \mathbb{R}^n$
such that for $(d, X) \in \mathbb{R}^n \times \mathcal{S}^m$, the following holds
\[
\left\langle d, \nabla G(x)^*[X]\right\rangle \, = \,
\langle\nabla G(x)[d], X\rangle .
\]
Equivalently, for all $X \in \mathcal{S}^m$, we have the expression
\[
\nabla G(x)^*[X] = \bbm \langle\nabla_{x_1} G(x), X\rangle
& \cdots  & \langle\nabla_{x_n} G(x), X\rangle \ebm^T.
\]
%\left[\begin{array}{c}
	%	\left\langle\nabla_{x_1} G(x), M\right\rangle \\
	%	\vdots \\
	%	\left\langle\nabla_{x_n} G(x), M\right\rangle
	%\end{array}\right]=\sum_{i, j=1}^m M_{i j} \nabla G_{i j}(x)
	% where $\nabla G_{i j}(x)$ denotes the gradient of the $(i, j)$-th entry of $G$ as a function of $x$.
	% Similarly, we shall denote the gradient of any real-valued function $F: \mathbb{R}^n \rightarrow \mathbb{R}$ at a point $x \in \mathbb{R}^n$ by $\nabla F(x)$.
	At a point $u \in \mR^n$ with $G(u) \succeq 0$,
	the (Bouligand) tangent cone to the positive semidefinite cone of all
	$m$-by-$m$ symmetric matrices $\mathcal{S}_{+}^m$
	at $G(u)$ is
	\[
	T_{\mathcal{S}_{+}^m}(G(u))  = \left\{N \in \mathcal{S}^m: E^T N E \succeq 0\right\},
	\]
	where $E$ is a matrix whose column vectors form a basis of the null space
	(i.e., the kernel)
	of $G(u)$.  Thus, the lineality space at $u$ can be characterized as
	\[
	\operatorname{lin}\left(T_{\mathcal{S}_{+}^m}(G(u))\right) \, =  \,
	\left\{N \in \mathcal{S}^m: E^T N E=0\right\},
	\]
	which is the largest subspace contained in $T_{\mathcal{S}_{+}^m}(G(u))$.
	
	%		\subsubsection*{Nondegeneracy condition}
	%	\label{ndcon}
	
	Now we can introduce the nondegeneracy condition (NDC).
	Let $u$ be a feasible point of \reff{nsdp}.	
	The {\it nondegeneracy condition} (NDC),
	also called {\it transversality constraint qualification} in some literature \rev{(see \cite{shap})},
	is said to hold at $u$ if
	\be \label{CQ}
	\operatorname{Im} \nabla G(u)+\operatorname{lin}
	\left(T_{\mathcal{S}_{+}^m}(G(u))\right)
	\, = \, \mathcal{S}^m . \tag{NDC}
	\ee
	In the above, $\operatorname{Im} \nabla G(u)$
	denotes the image of the mapping $\nabla G(u)$,
	i.e., it is the linear span of matrices $\nabla_{x_i} G(u)$
	for $i =1, \ldots, n$. Under the NDC, we can derive first and second order
	optimality conditions for $u$ to be a local minimizer of \reff{nsdp}.
	More details can be found in Section~\ref{opt:nsdp}.

	The NDC is generally viewed as an analogue of the classical
	linear independence constraint qualification condition (LICQC)
	in nonlinear programming, as it implies the uniqueness of
	the Lagrange multiplier matrix (see Section \ref{firstopt}).
	\rev{However, the NDC is not equivalent to the LICQC when $G(x)$ is diagonal with diagonal entries $g_1(x),\cdots, g_m(x)\in \mR[x]$. The LICQC is said to hold at a feasible $u\in \mR^n$ for polynomial inequalities $g_1(x)\geq 0,\cdots, g_m(x)\geq 0$ if the gradients of the active constraints are linearly independent.}
	For instance, consider the polynomial matrix
	\[
	G(x)  = \left[\begin{array}{ll}
		x_1  & 0  \\
		0& x_2  \\
	\end{array}\right].
	\]
	The matrix inequality $G(x) \succeq 0$  can be equivalently given as $x_1\geq0$, $x_2\geq 0$.
	Clearly, the LICQC holds for constraints $x_1\geq0$, $x_2\geq 0$ at $(0,0)$,
	since the gradient vectors $(1,0)$, $(0,1)$ are linearly independent.
	However, the nondegeneracy condition fails
	to hold at $(0,0)$, because
	%$$
	%\operatorname{Im} \nabla G(x)+\operatorname{lin}\left(T_{\mathcal{S}_{+}^2}(G(x))\right)
	%\subseteq \{S\in \mathcal{S}^2\mid S_{12}=0 \}\neq \mathcal{S}^2,
	%$$
	%by direct computations.
	\[
	\operatorname{Im} \nabla G(0)+\operatorname{lin}\left(T_{\mathcal{S}_{+}^2}(G(0))\right)
	\, = \, \Big \{ \bbm d_1 & 0 \\ 0 & d_2 \ebm : d_1, d_2 \in \re \Big \}
	\neq \mathcal{S}^2 .
	\]

	In general, when $G$ is a diagonal matrix with diagonal entries $g_1,\ldots,g_m\in \mR[x]$,
	the matrix inequality $G(x)\succeq 0$ is equivalent to 
	\[
	g_1(x) \geq 0, \ldots, g_m(x) \geq 0.
	\]
	However, for a point $u$ with $G(u)\succeq 0$, the NDC always fails at $u$
	if there exist $i_1<i_2$ such that $g_{i_1}(u)=g_{i_2}(u)=0$,
	regardless of linear independence of
	$\nabla g_1(u), \ldots, \nabla g_m(u)$.
	This is because
	$$
	\operatorname{Im} \nabla G(u)+\operatorname{lin}\left(T_{\mathcal{S}_{+}^m}(G(u))\right)
	\subseteq \{X \in \mathcal{S}^m\mid X_{i_1i_2}=0 \}\neq \mathcal{S}^m.
	$$
	In the above, $X_{i_1i_2}$ denotes the $(i_1, i_2)$th entry of $X$.
	Hence,  if the NDC holds at $u$, there exists at most one
	$1\leq i \leq m$ such that $g_i(u)=0$. For this case, the NDC
	is equivalent to the LICQC.

	%\subsection{Main results}
	\subsection{Contributions}

	Our first major result is that the hierarchy of \reff{sos}-\reff{mom}
	has finite convergence if the nondegeneracy condition,
	strict complementarity condition and second order sufficient condition hold
	at every minimizer of \reff{nsdp}. We refer to Sections~\ref{seccq} and \ref{opt:nsdp}
	for definitions of these  conditions.

	%The following is our finite convergence result.

	\begin{thm}  \label{mianthm1}
		Suppose the quadratic module $\qmod{G}$ is \rev{Archimedean}.
		If the nondegeneracy condition, strict complementarity condition
		and second order sufficient condition hold at every minimizer of \reff{nsdp},
		then the matrix Moment-SOS hierarchy of \reff{sos}-\reff{mom} has finite convergence,
		i.e.,  there exists $N>0$ such that
		$f_{k,sos}=f_{k,mom}=f_{\min}$ for all $k \ge N$.
		Furthermore, we have $f-f_{\min}\in \qmod{G}$.
	\end{thm}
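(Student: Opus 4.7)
The plan is to prove the stronger statement $f - f_{\min} \in \qmod{G}$; once this membership is established, finite convergence is immediate. Indeed, any fixed representation $f - f_{\min} = \sigma + \sum_{t=1}^r v_t^T G v_t$ has bounded degree, so for any $N$ at least half of $\max\{\deg \sigma,\, \deg(v_t^T G v_t)\}$ we have $f - f_{\min} \in \qmod{G}_{2N}$, whence $f_{N,sos} \ge f_{\min}$. Combined with the weak-duality chain $f_{k,sos} \le f_{k,mom} \le f_{\min}$ this yields $f_{k,sos} = f_{k,mom} = f_{\min}$ for all $k \ge N$.

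The Archimedean hypothesis makes $S(G)$ compact, so the set of minimizers $K^\star$ is compact. The second order sufficient condition forces each minimizer to be a strict local minimizer, hence isolated, so $K^\star$ is finite, say $K^\star = \{u^{(1)}, \ldots, u^{(r)}\}$. At each $u^{(j)}$ the NDC delivers a unique Lagrange multiplier matrix $\Lambda^{(j)} \in \mathcal{S}_+^m$ satisfying $\Lambda^{(j)} G(u^{(j)}) = 0$ and $\nabla f(u^{(j)}) = \nabla G(u^{(j)})^*[\Lambda^{(j)}]$, and strict complementarity strengthens this to $G(u^{(j)}) + \Lambda^{(j)} \succ 0$.

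The central technical step is to construct, for each minimizer $u^{(j)}$, an SOS polynomial $\sigma_j \in \Sigma[x]$ and a polynomial vector $v_j \in \mR[x]^m$, together with a neighborhood $U_j$ of $u^{(j)}$, such that the local identity
\[
f(x) - f_{\min} \;=\; \sigma_j(x) + v_j(x)^T G(x) v_j(x) + \rho_j(x)
\]
holds, with $\rho_j$ vanishing to sufficiently high order at $u^{(j)}$ on $S(G)$. The idea is to simultaneously block-diagonalize $G(u^{(j)})$ and $\Lambda^{(j)}$, which is possible by strict complementarity, and to choose the columns of $v_j$ adapted to the eigenbasis of $\Lambda^{(j)}$ so that $v_j^T G v_j$ reproduces the Lagrangian correction $\langle \Lambda^{(j)}, G(x)\rangle$ up to second order in $x - u^{(j)}$. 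The SOSC then provides the strict positivity of the Hessian of the Lagrangian on the critical cone that is needed to express the residual quadratic form as an explicit sum of squares. This is the matrix analogue of Nie's boundary-Hessian construction for the scalar Moment-SOS hierarchy.

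To globalize, I would build SOS separating weights $\phi_j$, concentrated near $u^{(j)}$ and vanishing to high order at the other minimizers, and paste the local certificates into
\[
f - f_{\min} \;=\; \sum_{j=1}^r \phi_j\bigl(\sigma_j + v_j^T G v_j\bigr) + \tau,
\]
and then apply the matrix Putinar Positivstellensatz (available under the Archimedean hypothesis) to the residual $\tau$, which by construction is strictly positive on $S(G)$ away from $K^\star$ and matches $f - f_{\min}$ to sufficient order at each $u^{(j)}$, yielding $\tau \in \qmod{G}$. The main obstacle is the local step: unlike the scalar LICQC setting, in which active inequalities decouple into independent gradients, here the single matrix inequality $G \succeq 0$ interacts with a possibly high-rank multiplier $\Lambda^{(j)}$, so $v_j$ must be engineered so that $v_j^T G v_j$ captures all its spectral directions simultaneously. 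The NDC-induced splitting of $\mathcal{S}^m$ into $\operatorname{Im}\nabla G(u^{(j)})$ and $\operatorname{lin}(T_{\mathcal{S}_+^m}(G(u^{(j)})))$ is exactly what makes this matching feasible, and it is the bridge from scalar Lagrangian techniques to matrix SOS certificates that drives the entire argument.
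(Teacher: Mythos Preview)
Your proposal has a genuine gap in the globalization step. You write
\[
f - f_{\min} \;=\; \sum_{j} \phi_j\bigl(\sigma_j + v_j^T G v_j\bigr) + \tau
\]
and then want to apply Putinar's Positivstellensatz to $\tau$. But Putinar requires $\tau > 0$ on \emph{all} of $S(G)$, not merely away from $K^\star$. At each minimizer $u^{(j)}$ you have $f(u^{(j)}) - f_{\min} = 0$, while $\phi_j \ge 0$ and $\sigma_j + v_j^T G v_j \ge 0$ on $S(G)$; hence $\tau(u^{(j)}) \le 0$, so Putinar cannot apply. Saying that $\tau$ ``matches $f - f_{\min}$ to sufficient order at each $u^{(j)}$'' only reinforces that $\tau(u^{(j)}) = 0$. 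This zero-set obstruction is exactly why the paper does \emph{not} globalize via a partition of unity plus Putinar, but instead invokes Scheiderer's local--global principle (Theorem~\ref{scal:bhc}), a much deeper tool designed precisely for nonnegative polynomials with finitely many zeros. To use it, the paper must (a) produce representations $f - f_{\min} \in \widehat{\qmod{G}}_u$ in the formal power series completion at each minimizer, and (b) construct auxiliary elements $a_u \in \qmod{G}$ satisfying the geometric condition (ii) of Theorem~\ref{scal:bhc}; it then reduces to a finitely generated submodule $\qmod{\Phi} \subseteq \qmod{G}$ before applying Scheiderer, since $\qmod{G}$ itself is generally not finitely generated.

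Your local step is also too vague to succeed as written. You describe it as ``the matrix analogue of Nie's boundary-Hessian construction,'' but the paper explains at length (Section~\ref{proofdif}) why no direct matrix BHC exists: the boundary of $S(G)$ is typically singular at a minimizer even when the NDC holds, so there is no smooth local parametrization by active constraints in the scalar sense. The paper's workaround is a Schur-complement reduction to a matrix $T(x) \in \mathcal{S}^{m-r}\qmod{G}$ with $T(u)=0$ (Proposition~\ref{propt}); this simultaneously kills the $H$-term in the SOSC and makes $\nabla T(u)$ regular, after which an Implicit Function Theorem change of variables yields the power-series expansion of Lemma~\ref{aule}. Your one-sentence plan of choosing $v_j$ ``adapted to the eigenbasis of $\Lambda^{(j)}$'' does not address the $H$-term, the singularity of $\partial S(G)$, or why the residual quadratic form lands in $\qmod{G}$ rather than merely being a sum of squares.
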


	The proof of Theorem \ref{mianthm1} uses  a result
	(see Theorem \ref{scal:bhc}) by Scheiderer on sum-of-squares representations
	for nonnegative polynomials that have only
	finitely many zeros \cite{Sch05}, which is also known as a version of
	the local-global principle for nonnegative scalar polynomials.
	In addition, it was shown in \cite{dgs} that  almost all linear perturbations
	(i.e., except for a set of zero measure in the input space)
	of a given polynomial matrix optimization problem generate perturbed problems
	in which the nondegeneracy condition,
	strict complementarity condition
	and second order sufficient condition hold at every minimizer.
	Hence, the optimality assumptions in Theorem \ref{mianthm1} hold generically.

	%We would like to remark that the nondegeneracy condition,
	%strict complementarity condition and second order sufficient condition hold
	%at every local minimizer of \reff{nsdp}, under generic assumptions,
	%which is shown in \cite{dgs}.

	There are very different algebraic and geometric properties
	between polynomial matrix inequalities and scalar polynomial inequalities.
	%there are essential differences between the matrix case and the scalar case,
	%and the matrix case is more difficult than the scalar case.
	We refer to Section~\ref{proofdif} for a detailed analysis and
	refer to Section~\ref{sec:proofs} for the  proof outline
	of Theorem~\ref{mianthm1}.
	Here is a short summary about the \rev{differences}:
	
	\bit
	
	%\item   When $G$ is diagonal, Theorem \ref{mianthm1}  does not imply
	%the finite convergence result for the scalar polynomial inequality case shown in \cite{nieopcd}.
	%This is because the NDC in nonlinear semidefinite optimization
	%has no intrinsic relation with the LICQC for its natural scalar descriptions,
	%as we have seen in the above.

	\item  The optimality conditions for nonlinear semidefinite optimization
	have geometric properties that are very different from those
	in nonlinear scalar constrained optimization.
	When $G$ is diagonal, the NDC for \reff{nsdp}
	is not equivalent to the LICQC for the corresponding scalar constrained optimization,
	as we have seen in Section \ref{seccq}.
	%For this reason,
	%the finite convergence theory for scalar cases in \cite{nieopcd}
	%cannot be applied to prove Theorem \ref{mianthm1}.
	Moreover, the existence of the ``$H$-term" in the second order sufficient condition
	\reff{sosc} (see Section~\ref{sec:sosc}) is another difficulty
	for proving the finite convergence. In contrast,
	this term does not appear in the  second order sufficient condition
	for nonlinear scalar constrained optimization.

	\item For a general  polynomial matrix $G$,
	the feasible set $S(G)$ of \reff{nsdp} usually has singular points on its boundary
	but the NDC holds. This is the case even if $G$ is linear.
	The reason is that at a minimizer $u\in \mR^n$, the rank of $G(u)$
	is usually less than $m-1$ (see \cite{NRS10}). For instance, consider
	\begin{equation} \label{intexm}
		G(x) = \left[\begin{array}{ccc}
			1   &~ x_1  &~ x_2 \\
			x_1 &~ 1    &~ x_3 \\
			x_2 &~ x_3  &~ 1
		\end{array}\right] \succeq 0,
	\end{equation}
	which gives the three-dimensional elliptope $\mathcal{E}_3$.
	The algebraic boundary of $\mathcal{E}_3$ is the cubic surface  defined by
	\[
	\det\, G(x) \, = \, (1-x_1^2)(1-x_2^2)-(x_3-x_1x_2)^2=0.
	\]
	One can see that  $u=(1,1,1)$
	is an isolated singular point on the surface $\det\, G(x) = 0$
	since $\nabla \det\, G(u)=0$. In contrast, the NDC holds at $u$.
	For such case, the finite convergence result for scalar polynomial inequalities
	developed in \cite{nieopcd} does not apply,
	since nonsingularity is a basic assumption there.
	However, Theorem~\ref{mianthm1} can be applied to
	show the finite convergence, under the given optimality conditions.

	\item A polynomial matrix inequality can be equivalently given
	by scalar polynomial inequalities, e.g., by all its principal minors.
	However, the scalarization usually has the singularity (i.e., the LICQC fails)
	and the scalar describing  polynomials (e.g., determinants)
	usually do not belong to the quadratic module $\qmod{G}$.
	This prevents us from using the existing finite convergence theory
	for scalar polynomial inequalities
	to get that for  polynomial matrix inequalities,
	which is given in Theorem~\ref{mianthm1}.

	\item  The proof of finite convergence for scalar polynomial inequalities
	relies on the boundary hessian condition (BHC) by Marshall \cite{mar06}.
	%%which implies the conditions appearing in Theorem \ref{scal:bhc}.
	%However, a matrix version of the boundary Hessian condition and
	%the corresponding local-global principle
	%for polynomial matrices are not available.
	However, the BHC usually does not hold for polynomial matrix inequalities,
	even for linear $G$, due to the singularity (see the above).

	\item For a  polynomial matrix $G$, the quadratic module $\qmod{G}$
	is usually not finitely generated, as shown in \cite{cj}.
	However, most earlier existing work about finite convergence
	relies on the assumption that the quadratic module is finitely generated.
	%We refer to Section~\ref{sec:qm} for the discussion about this issue.

	\eit

	A convenient criterion for detecting finite convergence
	and for extracting minimizers from moment relaxations
	is the flat extension or truncation \cite{CF05,HenLas05,hdlb,Lau05,nie2013certifying}
	(see Section~\ref{sec:momrel} for details).
	Our second major result is that under the same optimality conditions,
	every minimizer of the moment relaxation \reff{mom} must have a  flat truncation when the relaxation order is sufficiently large.
	This conclusion is similar to the scalar constrained case as in \rev{\cite{hny2,nie2013certifying}.}
	An ideal $I\subseteq \mR[x]$ is said to be zero dimensional if its complex variety
	%$\{x\in \mC^n:  h(x)=0, \,\forall h\in I \}$
	is a finite set, i.e., the polynomials in $I$ have only finitely many \rev{complex}  common  zeros.
	For a quadratic module $M\subseteq \mR[x]$, its support is defined to be the set
	\[
	\supp{M} \, \coloneqq \,   M\cap (-M),
	\]
	which is an ideal in $\mR[x]$;
	see \cite{marshall2008positive,Sch09}. For a polynomial tuple $h$,
	$\ideal{h}$ denotes the ideal generated by $h$.
	
	\begin{thm}  \label{mianthm2}
		Suppose the quadratic module $\qmod{G}$  is \rev{Archimedean}.
		If  the nondegeneracy condition, strict complementarity condition
		and second order sufficient condition hold at every minimizer of \reff{nsdp},
		then we have:
		\bit
		\item[(i)] The support $\supp{M }$ is zero dimensional for the quadratic module
		\[
		M = \qmod{G}+\ideal{f-f_{\min}}  .
		\]
		%the ideal  $(\qmod{G}+\ideal{f-f_{\min}}) \cap-(\qmod{G}+\ideal{f-f_{\min}})$ has dimension $0$.

		\item[(ii)] Every minimizer of the moment relaxation~\reff{mom}
		must have a flat truncation,
		when the relaxation order $k$ is sufficiently large.
		\eit	
	\end{thm}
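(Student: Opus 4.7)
The plan is to use the proof of Theorem~\ref{mianthm1} together with Scheiderer's local-global principle (Theorem~\ref{scal:bhc}) to exhibit a zero-dimensional ideal inside $\supp{M}$, and then to invoke truncated moment theory for part~(ii). Under the Archimedean assumption, $S(G)$ is compact; under the second order sufficient condition every global minimizer is isolated, so the minimizer set $K^{*}=\{u_1,\dots,u_s\}$ is finite. Theorem~\ref{mianthm1} gives $f-f_{\min}\in\qmod{G}$, and hence $f-f_{\min}\in\supp{M}$ since $-(f-f_{\min})\in\ideal{f-f_{\min}}\subseteq M$.

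For each coordinate $i$, set $\pi_i(x)\coloneqq\prod_{j=1}^s\bigl(x_i-(u_j)_i\bigr)$. My goal is to show $\pi_i^2\in\supp{M}$; since the joint complex zero set of $\pi_1^2,\dots,\pi_n^2$ is contained in the finite product $\{a\in\mC^n:a_i\in\{(u_1)_i,\dots,(u_s)_i\}\}$, this would force $\supp{M}$ to be zero dimensional. One direction is free: $\pi_i^2\in\Sigma[x]\subseteq\qmod{G}\subseteq M$. For the other direction, I would find $R>0$ with $R(f-f_{\min})-\pi_i^2\in\qmod{G}$. On $S(G)$, $\pi_i^2$ is bounded and $f-f_{\min}$ is strictly positive away from $K^{*}$ by compactness and isolation. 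Near each $u_j$, the SOSC yields a quadratic lower bound $f(x)-f_{\min}\geq c_j\|x-u_j\|^2$ on $S(G)$, while $\pi_i^2(x)=O(\|x-u_j\|^2)$, so $R$ can be chosen large enough that $R(f-f_{\min})-\pi_i^2$ is nonnegative on $S(G)$ with zero set exactly $K^{*}$. To conclude this polynomial lies in $\qmod{G}$, I would apply Theorem~\ref{scal:bhc}, using the NDC, strict complementarity, and SOSC (including the matrix $H$-term) to verify the boundary Hessian condition at each $u_j$. This yields $-\pi_i^2\in M$, hence $\pi_i^2\in\supp{M}$, and finishes~(i).

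For part~(ii), let $y$ be any minimizer of \reff{mom} at order $k$. By Theorem~\ref{mianthm1}, for sufficiently large $k$, $\mathscr{L}_y(f-f_{\min})=0$ and $\mathscr{L}_y(p)\geq 0$ for every $p\in\qmod{G}_{2k}$, so $\mathscr{L}_y$ vanishes on every element of $\supp{M}$ of degree at most $2k$. Combined with the zero dimensionality of $\supp{M}$ from part~(i), the moment matrix of $y$ acquires the column relations coming from a finite set of generators of $\supp{M}$, so its rank stays uniformly bounded in $k$. The Curto--Fialkow flat extension theorem then provides a flat truncation of $y$ at a level depending only on $\supp{M}$, paralleling the scalar argument in \cite{nie2013certifying,hny2}.

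The main obstacle will be the verification of the matrix boundary Hessian condition in the last step of~(i). In contrast with the scalar LICQC case, the NDC allows the rank of $G(u_j)$ to be strictly less than $m-1$, so the local description of $S(G)$ near $u_j$ genuinely involves matrix-valued constraints, and the SOSC contains an extra $H$-term reflecting the curvature of the positive semidefinite cone. The delicate step will be a careful local parametrization: I would use the NDC to split $\mR^n$ into a tangent part and a complement on which $\nabla G(u_j)^{*}$ is injective, then use the SOSC with its $H$-term to obtain the quadratic lower bound on $f-f_{\min}$ that makes Scheiderer's representation available.
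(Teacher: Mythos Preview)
Your approach to part~(i) is both more complicated than necessary and contains a real gap. The gap: Theorem~\ref{scal:bhc} is stated for a finitely generated quadratic module $\qmod{g}$ with $g$ a tuple of scalar polynomials, and the paper stresses (Section~\ref{proofdif}, item~(v)) that $\qmod{G}$ is typically \emph{not} finitely generated. So you cannot invoke Theorem~\ref{scal:bhc} directly to conclude $R(f-f_{\min})-\pi_i^2\in\qmod{G}$. Moreover, your plan to ``verify the boundary Hessian condition'' runs into exactly the obstacle discussed in item~(iv) of Section~\ref{proofdif}: BHC requires a smooth local parametrization of the boundary, which fails when $\operatorname{rank} G(u_j)<m-1$. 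The paper does not salvage BHC; it replaces it by a direct local SOS representation via the Schur-complement reformulation $T(x)\in\mathcal{S}^{m-r}\qmod{G}$ (Sections~\ref{sec:eqpmo}--\ref{sec:localsos}). Your last paragraph essentially rediscovers the need for this, but the work you describe is the content of Theorem~\ref{mat:bhc}, not a short verification.

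More importantly, the detour through the polynomials $\pi_i^2$ is unnecessary. The proof of Theorem~\ref{mianthm1} already constructs a finite tuple $\Phi\subseteq\qmod{G}$ and applies Theorem~\ref{scal:bhc} to $\qmod{\Phi}$; the \emph{second} conclusion of Theorem~\ref{scal:bhc} says that $\supp{\qmod{\Phi}+\ideal{f-f_{\min}}}$ is zero dimensional. Since $\qmod{\Phi}\subseteq\qmod{G}$ gives $\supp{\qmod{\Phi}+\ideal{f-f_{\min}}}\subseteq\supp{M}$, part~(i) follows in one line. You never need to exhibit specific elements of $\supp{M}$.

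For part~(ii), the claim that ``$\mathscr{L}_y$ vanishes on every element of $\supp{M}$ of degree at most $2k$'' is not justified: an element $q\in\supp{M}$ with $\deg q\le 2k$ has representations $\pm q=p_\pm+h_\pm(f-f_{\min})$ with $p_\pm\in\qmod{G}$, but there is no a~priori degree bound on $p_\pm,h_\pm$, so the constraints of \reff{mom} at order $k$ do not force $\mathscr{L}_y(q)=0$. The paper instead passes to the scalar relaxation for $\Phi$ (any minimizer of \reff{mom} restricts to a minimizer there), applies the Positivstellensatz to write $h_t^{2\ell}+\eta f+\sum_i\psi_i\phi_i+\psi_0=0$ for generators $h_t$ of the vanishing ideal $I(Z)$, and then shows $\langle\eta f,w^{(k)}\rangle=0$ via a kernel lemma for localizing matrices (Lemma~2.5 of \cite{nie2013certifying}). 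That step---upgrading $\langle f,w^{(k)}\rangle=0$ to $\langle\eta f,w^{(k)}\rangle=0$ for arbitrary $\eta$---is precisely the degree-control mechanism your sketch is missing; only after it does one get $h_t\in\ker M_k[w^{(k)}]$ and hence the flat truncation.
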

	
	We remark that Theorem~\ref{mianthm2}
	makes the same assumptions as in Theorem \ref{mianthm1},
	so it also implies that the Moment-SOS hierarchy of \reff{sos}-\reff{mom} has finite convergence,
	i.e., $f_{k,sos}=f_{k,mom}=f_{\min}$ for all $k$ big enough. The main motivation of
	Theorem~\ref{mianthm2} is to show how to detect finite convergence
	by checking the flat truncation condition.
	When it holds, we not only certify  finite convergence
	but also extract minimizers for \reff{nsdp}.

	\bigskip

	The paper is organized as follows.
	Section~\ref{sc:pre} reviews some basic results
	about optimality conditions for nonlinear semidefinite optimization
	and real algebraic geometry.  Section~\ref{sec:momrel} introduces
	the matrix Moment-SOS relaxations and basic properties.
	In Section~\ref{sec:proof}, we give proofs for
	Theorems \ref{mianthm1} and \ref{mianthm2}.
	Section~\ref{sec:dis} makes conclusions.

	\section{Preliminaries}
	\label{sc:pre}
	
	\subsection{Notation}\label{sec:notation}
	The symbol $\mathbb{N}$ (resp., $\mathbb{R}$, $\cpx$) denotes the set of
	nonnegative integral (resp., real, complex) numbers.
	Let $\mathbb{R}[x]:=\mathbb{R}\left[x_1, \ldots, x_n\right]$
	be the ring of polynomials in $x:=\left(x_1, \ldots, x_n\right)$
	with real coefficients and let $\mathbb{R}[x]_d$
	be the ring of polynomials with degrees $\leq d$.
	For $\ell\in \mathbb{N}$,  denote by  $\mathcal{S}^{\ell}$
	(resp., $\mathcal{S}^{\ell} \mathbb{R}[x]$)
	the set of all $\ell$-by-$\ell$  real symmetric matrices
	(resp., $\ell$-by-$\ell$ symmetric polynomial matrices).
	For $\af = (\af_1, \ldots, \af_n) \in \N^n$,
	denote $|\af| \coloneqq \af_1 + \cdots + \af_n$.
	For an integer $d$, denote
	\[
	\sigma(d)  \, \coloneqq \,   d(d+1)/2, \quad
	\mathbb{N}_d^n \, \coloneqq \,  \left\{\alpha \in \mathbb{N}^n \mid
	|\af| \leq d\right\}.
	\]
	For a polynomial $p, \operatorname{deg}(p)$ denotes its total degree.
	For a real number $t,\lceil t\rceil$ denotes the smallest integer that
	is greater than or equal to $t$.
	The symbol $e_i$ represents the unit vector with all entries equal to zero except for the $i$-th entry, which is one.
	The symbol $I_{\ell}$ stands for the $\ell$-by-$\ell$ identity matrix.
	For a matrix $A$, $A^{T}$ denotes its transpose. The kernel (i.e., null space) of a matrix $A$ is denoted as $\operatorname{ker} A$.
	For two matrices $W$, $Y$, their inner product is
	$\langle W, Y\rangle =\operatorname{tr}(WY^T)$.
	For a symmetric matrix $X$, the inequality $X \succeq 0$ means that
	$X$ is positive semidefinite (psd). Denote by $\mathcal{S}_{+}^m$
	the cone of all $m$-by-$m$ symmetric psd matrices.
	
	For a smooth function $p$ in $x$,
	its gradient and the Hessian with respect to $x$
	are denoted by $\nabla p$ and $\nabla^2 p$.  If $z$ is a subvector of $x$,
	then $\nabla_{z}p$  denotes its gradient
	with respect to $z$. If $z,w$ are two subvectors of $x$,
	say, $z=(x_{i_1},\dots,x_{i_{\ell_1}})$, $w=(x_{j_1},\dots,\rev{x_{j_{\ell_2}}})$,
	then we denote the Hessian of $p$
	with respect to $z,w$ as $\nabla_{z,w}^2 p$, i.e.,
	\[
	\nabla_{z,w}^2 p = \left(\nabla_{x_{i_t}\rev{x_{j_k}}}^2p
	\right)_{ \substack{ t=1,\dots,\ell_1, \\ k=1,\dots,\ell_2 }  }.
	\]

	\subsection{Optimality conditions for nonlinear semidefinite optimization} \label{opt:nsdp}

	In this subsection, we review basic theory for nonlinear semidefinite optimization.
	We refer to \cite{dgs,fa,shap,shuni,sunde} for more details.

	\subsubsection{First order optimality condition}\label{firstopt}	
	Consider a polynomial matrix  $G(x)\in \mathcal{S}^{m} \mathbb{R}[x]$.	Under the nondegeneracy condition \reff{CQ}, we can get
	the following KKT conditions for \reff{nsdp},
	which are shown in \cite{shap}.
	
	\begin{lem}
		Suppose $u$ is a local minimizer of \reff{nsdp}
		and the nondegeneracy condition \reff{CQ} holds at $u$,
		then there exists a unique matrix $\Lambda\in \mathcal{S}^m$ satisfying:
		\be \label{kkt}
		\left\{ \begin{array}{rcl}
			\nabla f(u)-\nabla G(u)^* \Lambda &=& 0, \\
			\Lambda \succeq 0, \quad  G(u) &\succeq& 0, \\
			\langle \Lambda,\,  G(u)\rangle &=& 0.
		\end{array}  \right.  \tag{FOOC}
		\ee
	\end{lem}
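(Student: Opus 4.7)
The statement is a standard KKT theorem for nonlinear semidefinite programming, so the plan is to invoke the conic-programming first-order machinery with the nondegeneracy condition playing the role of a constraint qualification, and then to derive uniqueness from a clean orthogonal-complement argument.

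For existence, the plan is to view \reff{nsdp} as a conic optimization problem of the form $\min f(x)$ subject to $-G(x) \in -\mathcal{S}^m_+$. The standard first-order necessary condition for such a problem states that, at a local minimizer $u$, one has $\nabla f(u) \in \nabla G(u)^*\bigl[\mathcal{N}_{\mathcal{S}^m_+}(G(u))\bigr]$ whenever Robinson's constraint qualification is satisfied, where $\mathcal{N}_{\mathcal{S}^m_+}(G(u))$ is the normal cone. The first step is therefore to argue that the \reff{CQ} condition implies Robinson's CQ (this is the classical fact that nondegeneracy is strictly stronger than Robinson's condition, since the lineality space of the tangent cone is contained in the tangent cone itself). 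With Robinson's CQ in force, the general theory (see, e.g., \cite{shap}) yields the existence of $\Lambda \in \mathcal{N}_{\mathcal{S}^m_+}(G(u))$ satisfying $\nabla f(u) = \nabla G(u)^*\Lambda$, and the normal cone to $\mathcal{S}^m_+$ at a psd matrix is precisely the set of $\Lambda$ with $\Lambda \succeq 0$ and $\langle \Lambda, G(u)\rangle = 0$.

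For uniqueness, the plan is to exploit the complementarity $\langle \Lambda, G(u)\rangle = 0$ to locate $\Lambda$ geometrically. Since both $\Lambda$ and $G(u)$ are psd with zero inner product, one has $\Lambda G(u) = 0$, so the image of $\Lambda$ lies in $\ker G(u)$, which means $\Lambda = E M E^T$ for some symmetric $M$, where $E$ is as in the definition of $\operatorname{lin}(T_{\mathcal{S}^m_+}(G(u)))$. A short computation then shows that this is exactly the condition $\Lambda \in \operatorname{lin}(T_{\mathcal{S}^m_+}(G(u)))^\perp$, using that $\operatorname{lin}(T_{\mathcal{S}^m_+}(G(u))) = \{N \in \mathcal{S}^m : E^T N E = 0\}$ and its orthogonal complement equals $\{E M E^T : M \in \mathcal{S}^{m-r}\}$. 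If $\Lambda_1, \Lambda_2$ were two multipliers satisfying \reff{kkt}, then $\Lambda_1 - \Lambda_2$ would lie in both $(\operatorname{Im}\nabla G(u))^\perp$ (since $\nabla G(u)^*(\Lambda_1-\Lambda_2)=0$) and in $\operatorname{lin}(T_{\mathcal{S}^m_+}(G(u)))^\perp$. Taking orthogonal complements in \reff{CQ} gives
\[
(\operatorname{Im}\nabla G(u))^\perp \cap \operatorname{lin}(T_{\mathcal{S}^m_+}(G(u)))^\perp = \{0\},
\]
which forces $\Lambda_1 = \Lambda_2$.

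The main obstacle, more bookkeeping than mathematical, will be cleanly justifying the characterization $\{\Lambda \succeq 0, \langle \Lambda, G(u)\rangle = 0\} \subseteq \operatorname{lin}(T_{\mathcal{S}^m_+}(G(u)))^\perp$ via the factorization $\Lambda = E M E^T$, and verifying that Robinson's CQ follows from \reff{CQ}. Both are standard but they are the only places where the specific semidefinite structure is genuinely used; once they are in hand, the existence and uniqueness statements reduce to generic conic-programming arguments and a one-line linear-algebra deduction from the nondegeneracy identity.
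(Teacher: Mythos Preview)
Your proposal is correct and matches the paper's treatment. The paper cites existence directly from \cite{shap} without spelling out the Robinson-CQ step, and then gives exactly your orthogonal-complement argument for uniqueness: from $\nabla G(u)^*[\Lambda_1-\Lambda_2]=0$ one gets $\Lambda_1-\Lambda_2\in(\operatorname{Im}\nabla G(u))^\perp$, and from $\Lambda_i\succeq 0$, $\langle G(u),\Lambda_i\rangle=0$ one gets $\operatorname{Im}\Lambda_i\subseteq\ker G(u)$ hence $\Lambda_1-\Lambda_2\in\operatorname{lin}(T_{\mathcal{S}^m_+}(G(u)))^\perp$, so \reff{CQ} forces $\Lambda_1=\Lambda_2$.
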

	The above is called the {\it first order optimality condition} (FOOC).
	Furthermore, if
	\be \label{SCC}
	\operatorname{rank} G(u)+\operatorname{rank} \Lambda=m,  \tag{SCC}
	\ee
	then $(u, \Lambda)$ is said to satisfy the {\it strict complementarity condition} (SCC).
	The nondegeneracy condition  implies the uniqueness of
	the Lagrange multiplier matrix $\Lambda$ in \reff{kkt}.
	To see this, suppose otherwise there exist $\Lambda_1,\Lambda_2\in \mathcal{S}^m$
	such that \reff{kkt} holds. Then, we have  $\nabla G(u)^*[\Lambda_1-\Lambda_2]=0$, which means that $\left\langle\nabla_{x_i} G(u), \Lambda_1-\Lambda_2\right\rangle=0$ for all $i=1,\dots,n$.
	So, $\Lambda_1-\Lambda_2$ belongs to the orthogonal complement of $\operatorname{Im} \nabla G(u)$.
	Since matrices $\Lambda_1,~\Lambda_2, ~G(u)$ are positive semidefinite, the condition $\langle G(u), \Lambda_i\rangle=0$ means that
	$\operatorname{Im} \Lambda_i \subseteq \operatorname{ker} G(u)$ $(i=1,2)$
	and  thus  $\Lambda_1-\Lambda_2$ belongs to the orthogonal complement of $\operatorname{lin}(T_{\mathcal{S}_{+}^n}(G(u)))$.
	Then, it follows from \reff{CQ} that $\Lambda_1=\Lambda_2$.

	%\textcolor{red}{(It may be better to delete the next sentence?)}
	%This analogy is not perfect, as it
	%does not reduce to the classical linear independence constraint qualification when the
	%matrix $G(x)$ is diagonal.

	\subsubsection{Second order  necessary and sufficient  conditions}\label{sec:sosc}
	For the Lagrange multiplier matrix $\Lambda$ in \reff{kkt},
	define the Lagrange function
	\[
	L(x, \Lambda)  \, \coloneqq \, f(x)-\langle \Lambda,  G(x)\rangle.
	\]
	In the following, we give the second order optimality conditions of \reff{nsdp}.
	For $x \in \mathbb{R}^n$, we define the following linear subspace of $\mathbb{R}^n$:
	%\ter{(In the later, $T(x)$ denotes a polynomial matrix.
		%Should we use a different notation like $\mc{N}(x)$?)}
	\be \label{tangent}
	\mc{N}(x) \, \coloneqq \,
	\left\{h=\left(h_1, \ldots, h_n\right) \in \mathbb{R}^n \Big \vert
	\sum_{i=1}^n h_i \cdot E^T \nabla_{x_i} G(x) E=0\right\},
	\ee
	where the column vectors of the matrix $E$ form a basis of
	$\operatorname{ker} G(x)$. For $x \in \mathbb{R}^n$ and $Q \in \mathcal{S}^m$,
	denote the symmetric $n$-by-$n$ matrix $H(x, Q)$,
	with entries $\rev{H(x, Q)_{i j}}~(1 \leq i, j \leq n)$ given as
	\[
	H(x, Q)_{i j} \, \coloneqq \,
	2 \langle Q, \nabla_{x_i} G(x) G(x)^{\dagger} \nabla_{x_j} G(x) \rangle.
	\]
	In the above, the superscript $^{\dagger}$ denotes the
	Moore-Penrose inverse of the matrix $G(x)$ (see \cite{shap}).
	
	The second order necessary  condition below was obtained by Shapiro \cite{shap}.
	
	\begin{lem}
		Suppose $u$ is a local minimizer of \reff{nsdp} and the nondegeneracy condition holds at $u$.
		Let $\Lambda$ be as in \reff{kkt}. Then, the {\it second order necessary condition}
		(SONC) holds at $u$,
		i.e., for every $0 \ne h \in \mc{N}(u)$, we have
		\[
		h^T\left(\nabla^2 L(u, \Lambda)+H(u, \Lambda)\right) h\geq 0. \tag{SONC}
		\]
	\end{lem}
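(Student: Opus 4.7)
The plan is to realize the SONC by producing a $C^2$ feasible arc $x(t)\in S(G)$ with $x(0)=u$ and $x'(0)=h$, then extracting the inequality from the second-order Taylor expansion of $f(x(t))\geq f(u)$. The condition $h\in\mc{N}(u)$, i.e.\ $E^T\nabla G(u)[h]E=0$, says exactly that $B:=\nabla G(u)[h]$ lies in the lineality space of $T_{\mathcal{S}_+^m}(G(u))$, so $h$ is a first-order feasible direction; the real work is upgrading to second order.

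To identify the correct second-order correction, I would set $x(t)=u+th+\tfrac{t^2}{2}w+o(t^2)$ and expand $G(x(t))$ in the block decomposition determined by $E$ (a basis of $\ker G(u)$) and a complementary range basis $E_+$. A Schur-complement analysis on the ``small eigenvalue'' block shows that $G(x(t))\succeq 0$ to order $t^2$ is equivalent to
\[
E^T\bigl(\nabla G(u)[w]+\nabla^2 G(u)[h,h]-2B\,G(u)^{\dagger}B\bigr)E\;\succeq\; 0,
\]
where the $-2B\,G(u)^{\dagger}B$ contribution is the well-known sigma-term of the PSD cone (it arises from the quadratic Schur-complement correction to the small eigenvalues). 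The nondegeneracy condition \reff{CQ} guarantees that $w\mapsto E^T\nabla G(u)[w]E$ is surjective onto symmetric matrices indexed by $\ker G(u)$, so a suitable $w$ exists making the left side vanish. The formal expansion can then be promoted to a genuine feasible $C^2$ arc via spectral perturbation of the small eigenvalues of $G$ together with an implicit-function type argument.

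Given such an arc, local minimality gives $\nabla f(u)^T h=0$ and $h^T\nabla^2 f(u)h+\nabla f(u)^T w\geq 0$. Complementarity $\langle\Lambda,G(u)\rangle=0$ with $\Lambda,G(u)\succeq 0$ forces $\Lambda=E\tilde\Lambda E^T$ for some $\tilde\Lambda\succeq 0$, so the first-order term vanishes because $\nabla f(u)^T h=\langle\Lambda,B\rangle=\langle\tilde\Lambda,E^T B E\rangle=0$. The second-order term rewrites as
\[
h^T\nabla^2 f(u)h+\langle\tilde\Lambda, E^T\nabla G(u)[w]E\rangle,
\]
and substituting the chosen value $E^T\nabla G(u)[w]E=2E^T B\,G(u)^{\dagger}B E-E^T\nabla^2 G(u)[h,h]E$, together with the identities $h^T H(u,\Lambda)h=2\langle\Lambda,B\,G(u)^{\dagger}B\rangle$ and $\langle\Lambda,\nabla^2 G(u)[h,h]\rangle=h^T\nabla^2_{xx}\langle\Lambda,G\rangle\big|_u\,h$, collapses everything to $h^T(\nabla^2 L(u,\Lambda)+H(u,\Lambda))h\geq 0$.

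The main obstacle I foresee is the final step promoting the formal $O(t^2)$ feasibility to an honest feasible arc: because $\operatorname{rank} G(u)$ can be substantially less than $m-1$, the boundary of $S(G)$ is typically singular at $u$, and the scalar implicit-function theorem cannot be applied component-wise. The argument must lean on the full characterization of the second-order tangent set to $\mathcal{S}_+^m$ and on the surjectivity furnished by \reff{CQ}; once that is in place, the remaining derivation is a bookkeeping exercise in linear algebra, and notably the strict complementarity condition plays no role in this \emph{necessary} condition.
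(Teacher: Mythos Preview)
The paper does not give its own proof of this lemma: it simply attributes the result to Shapiro \cite{shap} and states it without argument. So there is no ``paper's proof'' to compare against, only the cited reference.

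Your outline is the standard route to the SONC for nonlinear SDP and is essentially correct. The Schur-complement bookkeeping you wrote down is right: with $B=\nabla G(u)[h]$ and $h\in\mc{N}(u)$ (so $E^TBE=0$), the $O(t^2)$ Schur complement of the small block of $G(x(t))$ is $\tfrac{t^2}{2}\,E^T\!\big(\nabla G(u)[w]+\nabla^2G(u)[h,h]-2B\,G(u)^\dagger B\big)E+O(t^3)$, and NDC makes $w\mapsto E^T\nabla G(u)[w]E$ onto. One refinement simplifies the step you flagged as the main obstacle: instead of forcing that bracket to \emph{vanish} and then trying to upgrade a merely $O(t^3)$ Schur complement to genuine feasibility, use NDC to make it equal to $\varepsilon I\succ 0$. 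Then the polynomial arc $x(t)=u+th+\tfrac{t^2}{2}w$ itself already satisfies $G(x(t))\succeq 0$ for small $|t|$ (the (1,1) block stays positive definite and the Schur complement is $\tfrac{t^2}{2}\varepsilon I+O(t^3)\succeq 0$), so no implicit-function or spectral-perturbation argument is needed. Your second-order computation then yields
\[
h^T\big(\nabla^2L(u,\Lambda)+H(u,\Lambda)\big)h+\varepsilon\,\mathrm{tr}(\tilde\Lambda)\;\ge\;0,
\]
and letting $\varepsilon\downarrow 0$ gives the SONC. Your observation that strict complementarity is irrelevant here is correct.
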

	Furthermore, we say  the {\it second order sufficient condition}
	(SOSC) holds at $u$ if   for every $0 \ne h \in \mc{N}(u)$,
	\be \label{sosc}
	h^{T}\left(\nabla^2 L(u, \Lambda)+H(u, \Lambda)\right) h > 0.  \tag{SOSC}
	\ee
	The above \rev{matrix} $H(u, \Lambda)$ is well-known and often mentioned as the ``$H$-term"
	in nonlinear semidefinite optimization. In contrast,
	this term does not appear in the  second order sufficient condition
	for nonlinear scalar constrained optimization.	
	If the NDC, SCC, SOSC all hold at $u$, then $u$ is a strict local minimizer of \reff{nsdp}.
	In the following, we summarize all these facts.
	
	\begin{thm}
		Let $u$ be feasible for \reff{nsdp}. Then, we have that
		\bit
		\item[(i)] If $u$ is a local minimizer and the NDC holds at $u$, then the FOOC and SONC hold.
		
		\item[(ii)] If the NDC, SCC and SOSC hold at $u$, then $u$
		is a strict local minimizer of \reff{nsdp}.
		\eit
	\end{thm}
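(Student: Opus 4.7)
The plan is to dispatch the two parts separately. Part (i) needs no new argument: the first lemma of this subsection already gives existence of $\Lambda$ satisfying \reff{kkt} under NDC at a local minimizer, and Shapiro's lemma already gives the SONC. So (i) is just a consolidation of what has been recorded.

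For (ii), the plan is a proof by contradiction. Suppose NDC, SCC, and SOSC hold at $u$ but $u$ is not a strict local minimizer. Pick a sequence $u_k \in S(G)$ with $u_k \to u$, $u_k \ne u$, and $f(u_k) \le f(u)$; write $u_k = u + t_k h_k$ with $t_k = \|u_k - u\|$ and $\|h_k\| = 1$, and pass to a subsequence so that $h_k \to h$ with $\|h\| = 1$. Let $E$ be a basis for $\ker G(u)$, write $\Lambda = E\tilde\Lambda E^T$, and note that SCC forces $\tilde\Lambda \succ 0$.

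The first step is to show $h \in \mc{N}(u)$. Expanding $E^T G(u_k) E \succeq 0$ to first order in $t_k$ gives $E^T \nabla G(u)[h] E \succeq 0$. The inequality $f(u_k) \le f(u)$ combined with FOOC yields
\[
0 \, \ge \, h^T \nabla f(u) \, = \, \langle \Lambda, \nabla G(u)[h]\rangle \, = \, \langle \tilde\Lambda, E^T \nabla G(u)[h] E\rangle \, \ge \, 0,
\]
so both sides vanish; since $\tilde\Lambda \succ 0$, the PSD matrix $E^T \nabla G(u)[h] E$ must itself be zero, placing $h$ in $\mc{N}(u)$.

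The main technical point, and the hardest step, is to extract the $H$-term in the second-order analysis. The plan is to expand the PSD condition $G(u_k) \succeq 0$ via Schur complement with respect to the dominant block $P_1^T G(u_k) P_1 = \Sigma_r + O(t_k)$, where $P_1$ spans $\operatorname{Im} G(u)$ and $\Sigma_r \succ 0$. This yields a PSD relation for the restriction $E^T G(u_k) E$ corrected by an $O(t_k^2)$ term of the form $E^T \nabla G(u)[h_k] P_1 \Sigma_r^{-1} P_1^T \nabla G(u)[h_k] E$. Pairing this relation with $\tilde\Lambda \succ 0$, using FOOC to rewrite the first-order term as $h_k^T \nabla f(u)$, and substituting the Taylor bound coming from $f(u_k) - f(u) \le 0$ to cancel that linear contribution, one obtains after dividing by $t_k^2/2$ and taking $k \to \infty$
\[
h^T \bigl( \nabla^2 L(u, \Lambda) + H(u, \Lambda) \bigr) h \, \le \, 0,
\]
which directly contradicts SOSC since $0 \ne h \in \mc{N}(u)$. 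The SCC is used essentially at two points: it supplies $\tilde\Lambda \succ 0$, which is what converts PSD matrix inequalities paired against $\tilde\Lambda$ into honest scalar inequalities (the zero-forcing in Step 1, and the survival of the full $H$-term in Step 2). The detailed Schur-complement bookkeeping is standard and can be found in \cite{shap,shuni,sunde}.
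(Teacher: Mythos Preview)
Your argument is correct and follows the standard route for second-order sufficiency in nonlinear SDP: extract a limit direction $h$, use $\tilde\Lambda \succ 0$ (from SCC) plus the first-order PSD/trace pairing to force $h \in \mc{N}(u)$, then Schur-complement the feasibility condition against the nonsingular block $P_1^T G(u) P_1$ to produce the quadratic correction that, paired with $\tilde\Lambda$, becomes exactly the $H$-term $\langle \Lambda, \nabla G(u)[h]\,G(u)^{\dagger}\,\nabla G(u)[h]\rangle$. The bookkeeping you sketch checks out; in particular the identification $G(u)^{\dagger}=P_1\Sigma_r^{-1}P_1^T$ is what makes the Schur-complement remainder match $H(u,\Lambda)$.

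There is nothing to compare against in the paper itself: this theorem is stated there as a summary of known facts (``In the following, we summarize all these facts''), with no proof given and with attribution to the nonlinear SDP literature \cite{dgs,fa,shap,shuni,sunde}. Your write-up is essentially the argument one finds in those references, so it is consistent with---and more explicit than---what the paper records.
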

	
	We remark that when $G$ is diagonal, the NDC for \reff{nsdp}
	is not equivalent to the LICQC for the scalar constrained optimization,
	as discussed in Section \ref{seccq}.
	Thus, the above theorem is not equivalent to its classical analogue in the scalar case when $G$ is diagonal.

	\subsection{Some basics in real algebraic geometry}
	\label{sec:qm}

	In this subsection, we review some basics in polynomial optimization and real algebraic geometry.
	We refer to \cite{bcr,cj,HDLJ,kism,LasBk15,Lau09,niebook,Sch09,sk09}
	for more detailed introductions.
	
	For a degree $d\in \mathbb{N}$, denote the monomial vector
	\[
	[x]_{d}:=\left[\begin{array}{llllllll}
		1 & x_1 & \cdots & x_n & x_1^2 & x_1 x_2 & \cdots & x_n^{d}
	\end{array}\right]^T.
	\]
	A subset $I  \subseteq \mathbb{R}[x]$ is called an ideal of $\re[x]$
	if $I \cdot \mathbb{R}[x] \subseteq I$, $I+I \subseteq I$.
	Its real variety is the set of real common roots to all polynomials in $I$, which is defined as
	\begin{equation*}
		V_{\mR}(I)=\{x \in \mR^n \mid p(x)=0, ~ \forall p\in I\}.
	\end{equation*}
	Similarly, its complex variety is defined as
	\begin{equation*}
		V_{\mC}(I)=\{x \in \mC^n \mid p(x)=0, ~ \forall p\in I\}.
	\end{equation*}
	For a polynomial tuple $h  \coloneqq (h_1,\dots, h_s)$,
	$\ideal{h}$ denotes the ideal generated by $h$, i.e.,
	\begin{equation*}
		\ideal{h} \,= \, h_1 \cdot \mathbb{R}[x]+\cdots+h_s \cdot \mathbb{R}[x].
	\end{equation*}
	A polynomial $p$ is said to be a sum of squares (SOS) if
	\[
	p=p_1^2+\dots+p_t^2\,\,\quad  \text{for} \,\,\,p_1,\dots,p_t \in \mathbb{R}[x]\,\,\,  \rev{\text{and} \,\,\,t\in \mathbb{N}}.
	\]
	The set of all SOS polynomials in $x$ is denoted as $\Sigma[x]$.
	For a  degree $k$, denote the truncation
	\[
	\Sigma[x]_{k} \, \coloneqq  \, \Sigma[x] \cap  \mathbb{R}[x]_{k} .
	\]
	For a polynomial tuple $g=(g_1,\dots,g_m)$,
	the  quadratic module generated by $g$ is
	\be
	\qmod{g} \,  \coloneqq  \,  \Sigma[x]+ g_1 \cdot \Sigma[x]+\cdots+ g_m \cdot \Sigma[x].
	\ee
	The $k$th degree truncation of $\qmod{g}$ is
	\be
	\qmod{g}_{k} = \Sigma[x]_{k}+ g_1 \cdot \Sigma[x]_{k-\deg(g_1)}+\cdots
	+ g_m \cdot \Sigma[x]_{k-\deg(g_m)}.
	\ee
	A set $M\subseteq \mR[x]$ is called a quadratic module if it satisfies
	\[
	1\in M, \quad M+M\subseteq M, \quad \Sigma[x]\cdot M \subseteq M .
	\]
	The quadratic module $M$ is said to be finitely generated if there exists a polynomial tuple $g=(g_1,\dots,g_m)$ such that $M=\qmod{g}$.
	
	The sum $\ideal{h}+\qmod{g}$ is said to be \rev{Archimedean}, if  there exists
	$R >0$ such that $R-\|x\|^2 \in \ideal{h}+\qmod{g}$.
	If it is \rev{Archimedean}, then the set
	\[
	T \coloneqq \left\{x \in \mathbb{R}^{n} \mid h(x)=0, g(x) \geq 0\right\}
	\]
	must be compact. Clearly, if $p \in \ideal{h}+\qmod{g}$, then $p \ge 0$ on $T$
	while the converse is not always true. However,
	if $p$ is positive on $T$ and $\ideal{h}+\qmod{g}$ is \rev{Archimedean}, we have $p \in \ideal{h}+\qmod{g}$.
	This  is referred to as Putinar's Positivstellensatz \cite{putinar1993positive}.
	
	In the following, we define SOS polynomial matrices since \rev{they} will be used later in the proofs. For a positive integer $\ell$, the cone of $\ell$-by-$\ell$ SOS  polynomial matrices is the set
	\[
	\mathcal{S}^{\ell}\Sigma[x] \, \coloneqq \,  \Big \{
	\sum_{i=1}^{r} P_{i}^T P_{i}   \mid
	r \in \mathbb{N}, ~ P_{i} \in \mathbb{R}[x]^{\ell \times \ell}\Big \}.
	\]
	For an $m$-by-$m$ symmetric  polynomial matrix $G(x)$,
	its quadratic module of \rev{$\ell$-by-$\ell$}  polynomial matrices is
	\[
	\mathcal{S}^{\ell}\mathrm{QM}[G] \, \coloneqq \, \Big \{
	Q+\sum_{j=1}^r P_{j}^T G P_{j}   \Big \vert
	r \in \mathbb{N}, ~ Q\in\mathcal{S}^{\ell}\Sigma[x],~P_{j} \in \mathbb{R}[x]^{m \times \ell}
	\Big \} .
	\]
	Clearly, $\mathcal{S}^{\ell}\Sigma[x] \subseteq \mathcal{S}^{\ell}\mathrm{QM}[G]$.  \rev{Observe that when $\ell=1$, we recover the quadratic module $Q M[G]$ as in \reff{qm1}}. The quadratic module $\mathcal{S}^{\ell}\mathrm{QM}[G]$ is said to be \rev{Archimedean}
	if there exists  $R>0$ such that
	$
	(R-\|x\|^2)\cdot I_{\ell} \in \mathcal{S}^{\ell}\mathrm{QM}[G].
	$
	Putinar's Positivstellensatz has been generalized to the matrix case by Scherer and Hol  \cite{schhol}.

	\begin{thm}[\cite{schhol}]
		Suppose   the quadratic module $\mathcal{S}^{\ell}\mathrm{QM}[G]$ is \rev{Archimedean}, and $F(x)\in \mathcal{S}^{\ell}\mR[x]$. If $F\succ0$ on  the set $\{x\in\mR^n\mid G(x)\succeq 0\}$, then we have  $F \in \mathcal{S}^{\ell}\mathrm{QM}[G]$.
	\end{thm}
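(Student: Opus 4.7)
The strategy is to reduce this matrix Positivstellensatz to Putinar's scalar Positivstellensatz by means of an auxiliary-variable trick, and then to decompose the resulting scalar SOS certificate to extract a matrix representation. First I would introduce fresh indeterminates $y = (y_1, \ldots, y_\ell)$ and form the scalar polynomial $p(x,y) \coloneqq y^T F(x)\, y \in \mR[x,y]$, which is bihomogeneous of degree $2$ in $y$. Since $F \succ 0$ on $S(G) \coloneqq \{x : G(x)\succeq 0\}$, we have $p(x,y) > 0$ on the set $K \coloneqq S(G) \times \{y : \|y\|^2 = 1\}$; the Archimedean hypothesis $(R-\|x\|^2) I_\ell \in \mathcal{S}^\ell \qmod{G}$ forces $S(G) \subseteq \{\|x\|^2 \leq R\}$, so $K$ is compact.

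Next I would consider the scalar quadratic module in $\mR[x,y]$
\[
\tilde M \,\coloneqq\, \Big\{ \sigma + \sum_j v_j^T G\, v_j : \sigma \in \Sigma[x,y],\ v_j \in \mR[x,y]^m \Big\},
\]
together with the equality constraint $\|y\|^2 - 1 = 0$. Contracting the matrix Archimedean certificate against a unit vector $y$ yields $R - \|x\|^2 \in \tilde M + \ideal{\|y\|^2 - 1}$, so this scalar module is Archimedean on $\mR[x,y]$. Applying Putinar's Positivstellensatz (stated before this theorem in the paper) to $p > 0$ on $K$, I obtain a representation
\[
p(x,y) = \sigma(x,y) + \sum_j v_j(x,y)^T G(x) v_j(x,y) + q(x,y)(\|y\|^2 - 1)
\]
with $\sigma \in \Sigma[x,y]$, $v_j \in \mR[x,y]^m$, and $q \in \mR[x,y]$.

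The final step is to convert this scalar identity into the desired form $F = Q + \sum_j P_j^T G P_j$ with $Q \in \mathcal{S}^\ell \Sigma[x]$ and $P_j \in \mR[x]^{m \times \ell}$, by projecting each summand onto its bihomogeneous-degree-$2$-in-$y$ component. The $y$-linear part $v_j(x,y) = P_j(x) y + O(\|y\|^2)$ contributes $(P_j y)^T G (P_j y) = y^T (P_j^T G P_j)\, y$, and expanding each SOS summand $g_k(x,y)^2$ of $\sigma$ in powers of $y$ produces a contribution $y^T \nabla_y g_k(x,0) \nabla_y g_k(x,0)^T y$ plus cross terms. The main obstacle is precisely this extraction: cross terms of the shape $g_k(x,0) \cdot y^T H_y g_k(x,0)\, y$ are not manifestly matrix SOS, and the scalar Putinar certificate mixes $y$-degrees freely rather than respecting the bilinear structure of $p$. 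To overcome this I would either (i) replace the single Putinar application by a bigraded version in which $\sigma$ and the $v_j$ are themselves taken to be bihomogeneous in $y$ (of degrees $2$ and $1$ respectively), so that the certificate lies in the correct bihomogeneous slice from the outset, or (ii) perturb $F \mapsto F - \varepsilon I_\ell$ (still positive by strictness of $F \succ 0$) and absorb residual non-SOS cross terms into $\mathcal{S}^\ell \qmod{G}$ using the slack $\varepsilon I_\ell$ together with the matrix Archimedean element $(R-\|x\|^2) I_\ell$. Route (i) is the technical core of Scherer and Hol's original argument and yields the claim $F \in \mathcal{S}^\ell \qmod{G}$.
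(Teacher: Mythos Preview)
The paper does not supply a proof of this theorem; it is quoted verbatim from Scherer and Hol \cite{schhol} as a background result, with no argument given in the present paper. So there is no ``paper's own proof'' to compare your attempt against.

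Regarding your proposal on its own merits: the auxiliary-variable reduction $p(x,y) = y^T F(x)\,y$ followed by scalar Putinar on $S(G) \times \{\|y\|=1\}$ is the natural strategy and is in the spirit of how matrix Positivstellens\"atze of this type are established in the literature. You have also correctly isolated the one genuine difficulty, namely that the raw scalar certificate mixes $y$-degrees and does not immediately unpack into a representation $F = Q + \sum_j P_j^T G P_j$. However, your proposal does not actually overcome that obstacle. Route~(i) (``take $\sigma$ and the $v_j$ bihomogeneous in $y$ from the outset'') is asserted by appeal to \cite{schhol} rather than carried out; a bigraded Putinar statement is not available in the paper and would itself require proof. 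Route~(ii) (``absorb residual cross terms using the slack $\varepsilon I_\ell$ and the Archimedean element'') is too vague to verify: the problematic contributions $2\,g_k(x,0)\cdot y^T \nabla^2_y g_k(x,0)\, y$ can have arbitrary sign and arbitrary $x$-degree, and it is not clear how a fixed $\varepsilon I_\ell$ together with $(R-\|x\|^2)I_\ell$ would dominate them uniformly. As written, then, your proposal is a correct high-level outline that stops precisely at the technical heart of the argument.
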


	%
	%\subsection{Some technical propositions}
	%
	\bigskip
	In the following, we give a result by Scheiderer on representing nonnegative polynomials with finitely many zeros (also known as a version of the  local-global principle for nonnegative scalar polynomials in the literature), which plays a crucial role in the proof. For the polynomial tuple
	$
	g=(g_1, g_2,\dots, g_m),
	$
	denote the set
	\[
	S(g)=\{x\in \mR^n:g_1(x)\geq 0, g_2(x)\geq 0,\dots, g_m(x) \geq 0\}.
	\]
	%Let $M \subseteq \re[x]$ be the quadratic module generated by $g$.
	For a point $u = (u_1, \ldots, u_n) \in \mR^n$, denote by $\widehat{\mR[x]}_u$
	the completion of the local ring $\re[x]_u$,
	which is the ring of rational functions $p(x)/q(x)$ with $q(u) \ne 0$.
	In fact, the completion $\widehat{\mR[x]}_u$ is the ring
	of formal power series in $x_1-u_1, \ldots, x_n-u_n$. 
	Equivalently,
	\[
	\widehat{\mR[x]}_u = \mathbb{R}[[x_1-u_1, \ldots, x_n-u_n]].
	\]
	Since every polynomial can be viewed as a power series with zero coefficients beyond a certain degree,  \rev{this} induces the natural map $\psi$ from the polynomial ring $\mathbb{R}[x]$ to  $\widehat{\mR[x]}_u$.
	We denote by $\widehat{\qmod{g}}_u$
	the quadratic module in $\widehat{\mathbb{R}[x]}_u$ generated by the image of $\qmod{g}$ under the natural map $\psi$.

	\begin{thm}[Proposition 3.4, \cite{Sch05}] \label{scal:bhc}
		Assume that  $\qmod{g}$ is \rev{Archimedean}, $f \in \mR[x]$ is nonnegative on $S(g)$
		and has only finitely many zeros in $S(g)$.
		\rev{If  $f \in \widehat{\qmod{g}}_u$
			for every $u \in V_{\mR}(f) \cap S(g)$} and at least one of the following two conditions is satisfied:
		\bit
		
		\item[(i)] $\operatorname{dim} V_{\mR}(f) \leq 1$;
		
		\item[(ii)] \rev{for every $u \in V_{\mR}(f)  \cap S(g)$, there are a neighborhood $U$
			of $u$ in $\mR^n$ and an element $a \in \qmod{g}$
			such that}
		\[
		\{x\in \mR^n: a(x) \geq 0\} \cap V_{\mR}(f) \cap U \subset S(g),
		\]
		\eit
		then $f \in \qmod{g}$.
		Furthermore, the ideal $(\qmod{g}+\ideal{f})\cap -(\qmod{g}+\ideal{f})$ is zero dimensional.
	\end{thm}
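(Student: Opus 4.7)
The plan is to establish $f \in \qmod{g}$ by a local-global patching argument. Since $\qmod{g}$ is Archimedean, $S(g)$ is compact; write $Z := V_{\mR}(f) \cap S(g) = \{u_1, \ldots, u_r\}$, which is finite by hypothesis. I would cover $S(g)$ by small open neighborhoods $U_i$ of each $u_i$ together with a compact ``exterior'' region $K \subseteq S(g)$ on which $f$ is bounded below by some $\varepsilon > 0$. On $K$, since $f$ is strictly positive, a suitable localized form of Putinar's Positivstellensatz supplies the ``easy'' piece of the representation.

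At each zero $u_i$, the hypothesis $f \in \widehat{\qmod{g}}_{u_i}$ supplies a formal representation $f = \sigma_{i,0} + \sum_j \sigma_{i,j} g_j$ inside the completion $\widehat{\mR[x]}_{u_i}$, with formal sums of squares $\sigma_{i,j}$. I would then invoke Artin's approximation theorem (or a N\'eron desingularization in the Popescu form) to replace each $\sigma_{i,j}$ by a genuine polynomial SOS element $s_{i,j} \in \Sigma[x]$ so that $f - (s_{i,0} + \sum_j s_{i,j} g_j)$ lies in $\mathfrak{m}_{u_i}^N$ for $N$ as large as needed, where $\mathfrak{m}_{u_i}$ is the maximal ideal at $u_i$. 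This gives, at each zero, a polynomial candidate representation valid modulo arbitrarily high vanishing order.

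The main obstacle is then the global gluing: pasting the local candidate identities into one identity in $\qmod{g}$. Multiplying the local representations by SOS bump functions $\varphi_i$ summing to a strictly positive function on $S(g)$ does not automatically yield an element of $\qmod{g}$, because the high-order local error terms still need to be absorbed globally. This is precisely where the two alternative hypotheses enter. Under (i), the one-dimensionality of $V_{\mR}(f)$ permits a curve-selection/normalization argument along each irreducible component of the real zero set; on a real algebraic curve the local formal membership propagates to global membership in the relevant quadratic module by classical results, and the error terms vanish. Under (ii), the auxiliary element $a \in \qmod{g}$ locally replaces $S(g)$ by the simpler basic semialgebraic set $\{a \geq 0\}$ on $V_{\mR}(f) \cap U$, so that the local representation can be upgraded via an Archimedean Positivstellensatz on this simpler set and then transferred back into $\qmod{g}$ using that $a$ itself belongs to $\qmod{g}$.

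For the final assertion, let $J = \supp{M} = M \cap (-M)$, where $M = \qmod{g} + \ideal{f}$. Once $f \in \qmod{g}$ is known, both $f$ and $-f$ lie in $M$, and $V_{\mR}(M) \subseteq Z$ is finite. Applying the already-proved representation result to suitable polynomials that are nonnegative on $S(g)$ and vanish exactly on $Z$ shows that $J$ contains polynomials cutting out $Z$ set-theoretically, and a standard real Nullstellensatz argument upgrades this to finiteness of $V_{\mC}(J)$; hence $J$ is zero-dimensional, completing the proof.
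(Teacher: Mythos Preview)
This theorem is not proved in the paper at all: it is quoted verbatim as Proposition~3.4 of Scheiderer~\cite{Sch05}, and the only addendum is the remark that the zero-dimensionality of $\supp{\qmod{g}+\ideal{f}}$ is implicit in Scheiderer's own proof. There is therefore no ``paper's own proof'' to compare your proposal against.

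Regarding the proposal on its own merits, there are genuine gaps. First, invoking Artin approximation to turn a formal identity $f=\sigma_0+\sum_j\sigma_j g_j$ in $\widehat{\mR[x]}_{u_i}$ into a polynomial one with $s_{i,j}\in\Sigma[x]$ is not straightforward: Artin approximation produces \'etale or polynomial solutions to systems of polynomial equations, but ``being a sum of squares'' is a conic, not an algebraic, condition, and you have given no argument that the approximants remain SOS. Second, and more seriously, the gluing step is only restated, not resolved. You say that under (i) ``local formal membership propagates to global membership by classical results'' and that under (ii) one can ``upgrade via an Archimedean Positivstellensatz on this simpler set,'' but neither claim is a theorem one can quote; these are precisely the nontrivial parts of Scheiderer's argument, which proceeds instead by analyzing the support ideal $J=\supp{\qmod{g}+\ideal{f}}$ directly and showing \emph{first} that $J$ is zero-dimensional, from which $f\in\qmod{g}$ is then deduced. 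Your order of implication is reversed.

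Finally, the last paragraph contains an actual error. From $V_{\mR}(M)\subseteq Z$ finite you cannot conclude that $V_{\mC}(J)$ is finite by ``a standard real Nullstellensatz argument'': the real Nullstellensatz controls real zeros, not complex ones, and there exist ideals with finitely many real points but positive-dimensional complex variety. Zero-dimensionality of $J$ requires producing, for each variable $x_i$, a univariate polynomial in $x_i$ lying in $J$, and nothing in your outline supplies such elements.
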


	We remark that although  the fact   that the ideal $(\qmod{g}+\ideal{f})\cap -(\qmod{g}+\ideal{f})$
	has dimension 0 is not stated explicitly  in \cite[Proposition 3.4]{Sch05},
	the proof of \cite[Proposition 3.4]{Sch05} essentially shows this.

	\section{The matrix Moment-SOS hierarchy}
	\label{sec:momrel}
	
	In this section, we briefly introduce the matrix Moment-SOS hierarchy of
	semidefinite relaxations for solving polynomial matrix  optimization.
	We refer to \cite{hdlb,niebook,schhol} for the relevant work.
	%The cone of $\ell$-by-$\ell$ SOS  polynomial matrices is
	%\[
	%\mathcal{S}^{\ell}\Sigma[x] \, \coloneqq \,  \Big \{
	%\sum_{i=1}^{k} P_{i}^T P_{i}   \mid
	%k \in \mathbb{N}, ~ P_{i} \in \mathbb{R}[x]^{\ell \times \ell}\Big \}.
	%\]
	%For the $m$-by-$m$ symmetric  polynomial matrix $G(x)$,
	%its quadratic module of $\ell$-by- $\ell$  polynomial matrices is
	%\[
	%\mathcal{S}^{\ell}\mathrm{QM}[G] \, \coloneqq \, \Big \{
	%Q+\sum_{j=1}^k P_{j}^T G P_{j}   \Big \vert
	%k \in \mathbb{N}, ~ Q\in\mathcal{S}^{\ell}\Sigma[x],~P_{j} \in \mathbb{R}[x]^{m \times \ell}
	%\Big \} .
	%\]
	%When $\ell=1$, the quadratic module of $\ell$-by- $\ell$  polynomial matrices  $\mathcal{S}^{\ell}\mathrm{QM}[G]$ reduces to the $1$-by-$1$
	%quadratic module generated by $G(x)$, for which we denote by $\qmod{G}$, i.e.,
	%\be \nonumber
	%\qmod{G}  = \Big \{
	%\sigma+\sum_{t=1}^\rev{r} v_t^TGv_t \mid   \sigma \in \Sigma[x],~v_i \in \mathbb{R}[x]^m,~\rev{r\in \N}
	%\Big \}.
	%\ee
	%Note that $\qmod{G}$
	%is a convex cone.
	% $$
	% \mathrm{QM}[\mathcal{G}]:=\mathrm{QM}[\mathcal{G}]^{1 },\,\,\Sigma[x]=\Sigma[x]^{1} .
	% $$
	%The quadratic module $\mathrm{QM}[G]$ is said to be archimedean
	%if there exists a scalar $N$ such that
	%$
	%N-\|x\|^2 \in \mathrm{QM}[G].
	%$
	%The feasible set $K$ is given as
	%\[
	%K  = \left\{x \in \mathbb{R}^n:  G(x) \succeq 0  \right\} .
	%\]
	%If $\mathrm{QM}[G]$ is archimedean, the set $K$ must be compact.
	%However, the converse is not necessarily true.
	\rev{Consider} the polynomial matrix $G\in \mathcal{S}^m\mR[x]$.
	The feasible set of \reff{nsdp} is denoted as
	\[
	S(G)  = \left\{x \in \mathbb{R}^n:  G(x) \succeq 0  \right\} .
	\]
	\rev{Define the degree parameter}
	\begin{equation}
		d_G  \coloneqq  \max  \big \{ \lceil \deg(G_{ij})/2 \rceil:
		1\leq i\leq j\leq m \big \}.
	\end{equation}
	For a degree $k$, the $2k$th degree truncation of $\qmod{G}$ is
	\be \nonumber
	\qmod{G}_{2k}:=\left\{\baray{l|l}
	\sigma+\sum_{t=1}^{r} v_t^TGv_t& \baray{l}  \sigma \in \Sigma[x],~v_t \in \mathbb{R}[x]^m,~r\in \N,\\
	\deg(\sigma)\leq 2k,~\deg(v_t^TGv_t)\leq 2k
	\earay
	\earay \right\}.
	\ee
	In the above,  the number $r$  can be upper bounded by
	$m\cdot \binom{n+k-\lceil d_G/2 \rceil}{k-\lceil d_G/2 \rceil}$,  by the Carath\'{e}odory's theorem.
	It is well-known that SOS polynomials can be modelled via SDP
	%represented by psd matrices
	and then checking polynomials in $\qmod{G}_{2k}$ can
	be done by solving semidefinite programs
	(see \cite{hdlb,schhol} or \cite[Chapter~10]{niebook}).

	For  a truncated multi-sequence (tms) $y \in \re^{ \mathbb{N}_{2k}^n }$, it determines the Riesz functional
	$\mathscr{L}_{y}$ acting on $\mathbb{R}[x]_{2k}$ as
	\begin{equation} \label{Reiz:fun}
		\mathscr{L}_{y}(\sum_{\alpha \in \mathbb{N}_{2k}^n} p_{\alpha} x^{\alpha})
		\,  \coloneqq  \, \sum_{\alpha \in \mathbb{N}_{2k}^n} p_{\alpha} y_{\alpha}.
	\end{equation}
	For a polynomial $q\in \mR[x]_{2k}$,
	the $k$th order {\em localizing matrix} of   $q$ for $y$
	is the symmetric matrix $L_{q}^{(k)}[y]$ satisfying
	\be \label{locmat:gi}
	L_{q}^{(k)}[y] \, = \, \mathscr{L}_{y}(q\cdot [x]_{t}[x]_{t}^T).
	\ee
	In the above, $t = k-\lceil \deg(q)/2 \rceil$
	%$\vec{p}$ denotes the coefficient vector of $p$ and
	and $\mathscr{L}_{y}$ is applied entry-wise to the polynomial matrix.
	In particular, for $q = 1$, the localizing matrix $L_{q}^{(k)}[y]$
	becomes the $k$th order {\it moment matrix}
	$
	M_k[y]\,\coloneqq\, L_1^{(k)}[y].
	$

	The $k$th order localizing  matrix of $G$ for
	$y$ is the block matrix
	%$$
	%L_{G}^{(k)}[y]:=\mathscr{L}_{y}\left(b_{k-d_G}(x) b_{k-d_G}(x)^T \otimes G(x)\right),
	%$$
	%where $\otimes$ stands for the Kronecker product, the Riesz functional
	% $\mathscr{L}_{y}$ defined as in \reff{Reiz:fun}
	% is applied entrywise to the polynomial matrix.
	\[
	L_{G}^{(k)}[y] \, \coloneqq \,
	( L_{ G_{ij} }^{(k)}[y] )_{1 \le i, j \le m},
	\]
	where each $L_{ G_{ij} }^{(k)}[y]$ is defined as in \reff{locmat:gi}.
	For instance, when $n=2$ and
	\[
	G(x)= \left[\begin{array}{ll}
		1-x_1x_2 &\quad x_1+x_2\\
		x_1+x_2&\quad x_1^2-x_2^2\\
	\end{array}\right],
	\]
	we have
	%\[
	%L_{G}^{(2)}[y]=\left[\begin{array}{lll}B_{00} &B_{10} & B_{01}\\
		%B_{10} & B_{20} & B_{11}\\ B_{01} & B_{11} & B_{02}\end{array}\right] \succeq 0,
	%\]
	\[
	L_{G}^{(2)}[y] =
	\left[\begin{array}{lll}
		L_{1-x_1x_2}^{(2)}[y]  &\quad  L_{x_1 + x_2}^{(2)}[y]  \\
		L_{x_1 + x_2}^{(2)}[y] &\quad   L_{x_1^2-x_2^2}^{(2)}[y]
	\end{array}\right],
	\]
	where
	\[
	L_{1-x_1x_2}^{(2)}[y] =
	\left[\begin{array}{lll}
		y_{00} - y_{11}  &\quad  y_{10} - y_{21} &\quad y_{01} - y_{12}  \\
		y_{10} - y_{21}  &\quad  y_{20} - y_{31} &\quad y_{11} - y_{22}  \\
		y_{01} - y_{12}  &\quad  y_{11} - y_{22} &\quad y_{02} - y_{13}
	\end{array}\right],
	\]
	\[
	L_{x_1+x_2}^{(2)}[y] =
	\left[\begin{array}{lll}
		y_{10} + y_{01}  &\quad  y_{20} + y_{11}  &\quad  y_{11} + y_{02}  \\
		y_{20} + y_{11}  &\quad  y_{30} + y_{21}  &\quad  y_{21} + y_{12}  \\
		y_{11} + y_{02}  &\quad  y_{21} + y_{12}  &\quad  y_{12} + y_{03}
	\end{array}\right],
	\]
	\[
	L_{x_1^2-x_2^2}^{(2)}[y] =
	\left[\begin{array}{lll}
		y_{20} - y_{02}  &\quad  y_{30} - y_{12}  &\quad  y_{21} - y_{03}  \\
		y_{30} - y_{12}  &\quad  y_{40} - y_{22}  &\quad  y_{31} - y_{13}  \\
		y_{21} - y_{03}  &\quad  y_{31} - y_{13}  &\quad  y_{22} - y_{04}
	\end{array}\right] .
	\]

	% The following is the matrix version of Putinar's Positivstellensatz.
	
	%\begin{thm}
	
	%	Let $\mathcal{G}$ be a set of symmetric  polynomial matrixs such that $\mathrm{QM}[\mathcal{G}]^{\ell}$ is archimedean. If $F \in \mathcal{S} \mathbb{R}[x]^{\ell}$ is positive definite on $S_G$, then $F \in \mathrm{QM}[\mathcal{G}]^{\ell}$.
	%\end{thm}
	
	%\subsection{The Moment-SOS hierarchy for PMO}
	%For $G\in \mathcal{S} \mathbb{R}[x]^{m}$ as in \reff{nsdp},
	
	For a degree $k$, the $k$th order SOS relaxation for solving \reff{nsdp} is
	\be \label{sos:local}
	\left\{\begin{array}{cl}
		\max & \gamma \\
		\text { s.t. } & f-\gamma \in \mathrm{QM}[G]_{2k} .
	\end{array}\right.
	\ee
	%For the polynomial matrix $G(x)$, writing $G(x)=\sum_{\gamma \in \mathbb{N}^n} G_\gamma x^\gamma$, for some finite family of real symmetric matrices $\left\{G_\gamma\right\}_\gamma \subset \mathcal{S}_m$, we also define the ( $m$-block) $s_k$-vector $G y$ by $(G y)_\alpha:=\sum_\gamma G_\gamma y_{\alpha+\gamma}$, for all $\alpha$ with $|\alpha| \leq$ $k$. That is, each entry $(G y)_\alpha,|\alpha| \leq k$, is a $m \times m$ matrix. Then, the localizing matrix $M_k(G \bar{y})$ has the block structure $\left\{(G y)_{\alpha+\beta}\right\}_{\alpha, \beta}$, with $|\alpha|,|\beta| \leq k$; equivalently, from its definition,
	% $L_{G}^{(k)}[y]$ is obtained from the moment matrix $M_k(y)$ by
	% $$
	% \left.M_k(G y)\right]_{\alpha \beta}=L_y\left(\left[\left(b_k(x) b_k(x)^T \otimes G(x)\right]_{\alpha \beta}\right), \quad|\alpha|,|\beta| \leq k\right.
	% $$
	The dual optimization of \reff{sos:local} is the $k$th order moment relaxation
	\be  \label{mom:local}
	\left\{ \baray{cl}
	\min &  \langle f, y \rangle  \\
	\st  &  L_{G}^{(k)}[y] \succeq 0, \, M_k[y] \succeq 0, \\
	& y_0  =  1,  \,  y \in \mathbb{R}^{\mathbb{N}_{2k}^{n} } .
	\earay \right.
	\ee
	For $k=1,\ldots,$ the sequence of relaxations \reff{sos:local}-\reff{mom:local}
	is called the matrix Moment-SOS hierarchy for solving \reff{nsdp}.
	The optimal value of \reff{sos:local} is the SOS bound of order $k$ and the optimal value of \reff{mom:local} is the moment bound of order $k$,  which we denote by $f_{k,sos}$, $ f_{k,mom}$, respectively.  By the weak duality,
	we have $ f_{k,sos} \leq f_{k,mom}$ for all $k$. The Moment-SOS hierarchy of
	\reff{sos:local}--\reff{mom:local} is said to
	have finite convergence if there exists $N>0$ such that
	$f_{k,sos}=f_{\min}$ for all $k \ge N$.

	\subsection{The main results}

	First, we show that under the \rev{Archimedean property} and some optimality conditions
	for \reff{nsdp}, the Moment-SOS hierarchy of
	\reff{sos:local}--\reff{mom:local} has finite convergence.
	
	\begin{thm}  \label{mianthm1:local}
		Suppose the quadratic module $\qmod{G}$  is \rev{Archimedean}.
		\rev{If the nondegeneracy condition \reff{CQ}, strict complementarity condition \reff{SCC} and
			second order sufficient condition \reff{sosc} hold
			at every minimizer of \reff{nsdp}}, then the matrix Moment-SOS hierarchy
		of \reff{sos:local}-\reff{mom:local} has finite convergence,
		i.e., there exists $N>0$ such that
		$f_{k,sos}=f_{k,mom}=f_{\min}$ for all $k \ge N$.
		Furthermore, we have $f-f_{\min}\in \qmod{G}$.
	\end{thm}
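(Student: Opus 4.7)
The plan is to establish the stronger conclusion $f - f_{\min}\in \qmod{G}$; finite convergence of both relaxations then follows immediately, since a degree-$2k$ representation forces $f_{k,sos}\ge f_{\min}$, which together with the standard $f_{k,sos}\le f_{k,mom}\le f_{\min}$ gives $f_{k,sos}=f_{k,mom}=f_{\min}$ for every large enough $k$. The tool I will invoke is Scheiderer's local-global principle, Theorem~\ref{scal:bhc}. The Archimedean hypothesis makes $S(G)$ compact, and the SOSC (together with NDC and SCC) forces every minimizer to be a strict local minimizer, hence isolated. So $V_\mR(f - f_{\min})\cap S(G)$ is a finite set, and the whole proof reduces to verifying (a) the local membership $f - f_{\min}\in \widehat{\qmod{G}}_u$ at each minimizer $u$, and (b) one of the side conditions (i) or (ii) in Theorem~\ref{scal:bhc}.

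\emph{Local normal form at a minimizer.} Fix a minimizer $u$ with Lagrange multiplier $\Lambda$ provided by NDC. Let $r=\operatorname{rank} G(u)$ and $\ell=m-r$. By SCC we have $\operatorname{rank}\Lambda=\ell$, and a constant orthogonal conjugation brings the pair into complementary block form
\[
G(u)=\begin{pmatrix} G_0 & 0 \\ 0 & 0\end{pmatrix},\qquad
\Lambda=\begin{pmatrix} 0 & 0 \\ 0 & \Lambda_0\end{pmatrix},\qquad
G_0\succ 0,\ \Lambda_0\succ 0.
\]
Because $G_0$ stays positive definite near $u$, a Schur-complement factorization produces an analytic unimodular $U(x)$ with $U(u)=I_m$ and an analytic $\ell\times\ell$ symmetric matrix $\tilde G$ with $\tilde G(u)=0$, such that $G(x)=U(x)^{T}\diag(G_{11}(x),\tilde G(x))U(x)$. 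A direct unwrapping of~\reff{CQ} shows NDC is equivalent to surjectivity of $\nabla\tilde G(u)\colon \mR^n\to \mathcal{S}^{\ell}$. The implicit function theorem then furnishes local analytic coordinates $(z,y)$ centered at $u$, where $z$ encodes the $\binom{\ell+1}{2}$ independent entries of a generic symmetric matrix $Z$ and $y\in \mR^{n-\binom{\ell+1}{2}}$, in which $\tilde G(x)\equiv Z$; in these coordinates $G(x)\succeq 0$ locally becomes $Z\succeq 0$.

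\emph{Building the local representation.} Decompose $f(x)-f_{\min}=\bigl(L(x,\Lambda)-f_{\min}\bigr)+\langle \Lambda,G(x)\rangle$. Writing $\Lambda=\sum_t v_tv_t^{T}$ for constant vectors $v_t\in\mR^m$ gives $\langle \Lambda,G\rangle=\sum_t v_t^{T}Gv_t\in \qmod{G}$ outright. The Lagrangian residual $L(x,\Lambda)-f_{\min}$ vanishes with zero gradient at the origin of the new coordinates, and a careful chain-rule computation shows that its Hessian restricted to the tangent subspace $\{z=0\}$ equals the pullback of $\nabla^2 L(u,\Lambda)+H(u,\Lambda)$ — the $H$-term arising precisely as the curvature correction from parameterizing the nonlinear boundary $\tilde G(x)=0$, with the Moore–Penrose piece $\nabla_{x_i}G\cdot G^{\dagger}\cdot \nabla_{x_j}G$ emerging from $G_{12}^{T}G_{11}^{-1}G_{12}$. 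By~\reff{sosc} this pulled-back Hessian is positive definite on $\{z=0\}$. In the completion $\widehat{\mR[x]}_u$ one then assembles
\[
L(x,\Lambda)-f_{\min}=\sigma(y)+\langle C(z,y),Z\rangle+\text{higher-order corrections},
\]
where $\sigma$ is a formal SOS in $y$ extracted from the positive-definite tangent Hessian, $C$ is a symmetric analytic matrix with $C(0,0)$ chosen compatibly with $\Lambda_0\succ 0$, and the cross and higher-order terms are absorbed into $\sigma$ by completing squares against a small positive multiple of it. Pulling $\langle C(z,y),Z\rangle$ back through the unimodular $U(x)$ realizes it as $\sum_t w_t(x)^{T}G(x)w_t(x)$ with analytic $w_t$, so the whole decomposition lies in $\widehat{\qmod{G}}_u$.

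\emph{Conclusion and main obstacle.} Side condition (ii) of Theorem~\ref{scal:bhc} is verified from the same local structure: one exhibits $a\in\qmod{G}$, built from principal-minor-type expressions $v^{T}Gv$, such that $\{a\ge 0\}\cap V_\mR(f-f_{\min})\cap U\subseteq S(G)$ in a neighborhood $U$ of $u$, using the local quadratic lower bound produced above to control zeros of $f-f_{\min}$ off of $S(G)$. Theorem~\ref{scal:bhc} now delivers $f-f_{\min}\in \qmod{G}$, so $f-f_{\min}\in \qmod{G}_{2k}$ for some $k$, which forces $f_{k,sos}=f_{k,mom}=f_{\min}$ for all $k\ge N$. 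The hard part is the construction of $\sigma$ and $C$ in the preceding paragraph: the scalar-case route via Marshall's boundary Hessian condition is unavailable because $S(G)$ is genuinely singular at minimizers where $\operatorname{rank} G(u)\le m-2$, and the $H$-term of SOSC must be tracked through the nonlinear coordinate change and then neutralized when completing squares — a phenomenon entirely absent in the scalar polynomial setting.
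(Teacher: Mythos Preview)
Your overall architecture matches the paper's: invoke Scheiderer's local-global principle (Theorem~\ref{scal:bhc}) after establishing, at each minimizer, the local membership $f-f_{\min}\in\widehat{\qmod{G}}_u$ and the side condition~(ii). But there is a genuine gap at the final step. You apply Theorem~\ref{scal:bhc} directly to the quadratic module $\qmod{G}$; however, that theorem is stated for a \emph{finitely generated} quadratic module $\qmod{g}$ with $g$ a scalar polynomial tuple, and Cimpri\v{c} has shown that $\qmod{G}$ for a polynomial matrix $G$ is typically \emph{not} finitely generated. The paper flags this explicitly (Section~\ref{proofdif}, item~(v), and the remark in Section~\ref{sec:proofs}) as one of the main obstructions, and its proof does not apply Theorem~\ref{scal:bhc} to $\qmod{G}$ at all. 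Instead it manufactures a finite scalar tuple $\Phi\subseteq\qmod{G}$ containing: the finitely many polynomial generators $g_u^{(i)}$ arising in the local representation at each minimizer, the witnesses $a_u$ for condition~(ii), scalar describing polynomials $\psi_j$ for $S(G)$ from Schm\"udgen's diagonalization, and $R-\|x\|^2$. Then Theorem~\ref{scal:bhc} is applied to $\qmod{\Phi}$, giving $f-f_{\min}\in\qmod{\Phi}\subseteq\qmod{G}$.

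This has a knock-on effect on your local construction. To feed into $\Phi$, the local representation must use \emph{polynomial} generators in $\qmod{G}$, not analytic ones. Your sentence ``pulling $\langle C(z,y),Z\rangle$ back through the unimodular $U(x)$ realizes it as $\sum_t w_t(x)^T G(x) w_t(x)$ with analytic $w_t$'' does not deliver this: for power-series $w_t$, the expression $w_t^T G w_t$ is not a priori a finite sum of SOS-power-series multiples of polynomial elements of $\qmod{G}$ (the naive entrywise expansion produces sign-indefinite coefficients on the off-diagonal $G_{ij}$). The paper circumvents this by replacing your analytic Schur complement $\tilde G$ with the \emph{polynomial} matrix $T(x)=(\det A(x))^2\,\tilde G(x)\in\mathcal{S}^{m-r}\qmod{G}$ (Section~\ref{sec:eqpmo}), so that each entry $T_{ij}$ and each $2\times 2$ principal submatrix already lies in $\qmod{G}$; the decomposition of $\langle C+H,V\rangle$ into such pieces with SOS power-series coefficients is then carried out explicitly in Lemma~\ref{aule}, and the construction of the witness $a_u$ in Theorem~\ref{exist:a} likewise uses $T$ rather than $G$. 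Your sketch of condition~(ii) via ``principal-minor-type expressions'' is in the right spirit but does not supply this polynomial-level control.
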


	In computational practice, the flat extension or truncation  condition (see \cite{CF05,HenLas05,hdlb,Lau05,nie2013certifying})
	is often used to detect finite convergence and to extract minimizers.
	Suppose $y^{*}$ is a minimizer of $(\ref{mom:local})$ for a relaxation order $k$.
	If there exists an integer $t \in[d_G, k]$ such that
	\be \label{FT:y*}
	\operatorname{rank} M_{t} [ y^* ] \, = \,
	\operatorname{rank} M_{t-d_G}[ y^* ],
	\ee
	then we can get one or several minimizers for \reff{nsdp}
	(see \cite{hdlb}).
	When \reff{FT:y*} holds,
	% we have the decomposition
	%\[
	%y^*|_{2t} = a_1 [\bpm \tau_1 \\ v_1 \epm]_{2t}
	%+ \cdots + a_r [\bpm \tau_r \\ v_r \epm]_{2t}
	%\]
	%for positive scalars $a_i > 0$
	%and distinct points $(\tau_i, v_i) \in \widetilde{K}$.
	the truncation $y^*|_{2t}$ is a moment vector
	for a finitely $r$-atomic probability measure $\mu^*$ whose support is contained in $S(G)$.
	That is, there exist points $u_1,\ldots, u_r \in S(G)$ such that
	\[
	\mu^*=\sum_{j=1}^r \gamma_j \delta_{u_j}, \quad
	\sum_{j=1}^r \gamma_j=1 \quad \gamma_j>0, \quad j=1, \ldots, r,
	\]
	where $\delta_{u_j}$ denotes the unit Dirac measure supported at $u_j$.
	One can further show that $f_{k,mom}=f_{\min}$ and $u_1, \dots, u_r$
	are minimizers of \reff{nsdp}.
	We refer to \cite{hdlb} and \cite[Chapter~10]{niebook} for details.

	Our second main result is to prove that under the \rev{Archimedean property} and optimality conditions,
	every minimizer of the moment relaxation \reff{mom:local}
	must have a flat truncation,  when the relaxation order $k$ is big enough.
	Recall that an ideal $I\subseteq \mR[x]$ is said to be zero dimensional
	if its complex variety $V_{\mC}(I)$
	is a finite set.

	\begin{thm}  \label{mianthm2:local}
		Suppose the quadratic module $\qmod{G}$  is \rev{Archimedean}.
		\rev{If the nondegeneracy condition \reff{CQ}, strict complementarity condition \reff{SCC} and
			second order sufficient condition \reff{sosc} hold
			at every minimizer of \reff{nsdp}},
		then we have:
		\bit
		\item[(i)] The support   $\supp{\qmod{G}+\ideal{f-f_{\min}}}$ is zero dimensional.
		%the ideal  $(\qmod{G}+\ideal{f-f_{\min}}) \cap-(\qmod{G}+\ideal{f-f_{\min}})$ has dimension $0$.	
		
		\item[(ii)] Every minimizer of the moment relaxation \reff{mom:local}
		must have a flat truncation, when $k$ is sufficiently large.
		
		\eit	
	\end{thm}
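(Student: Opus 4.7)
My plan for part (i) is to apply Scheiderer's local--global principle (Theorem \ref{scal:bhc}) to the scalar quadratic module $\qmod{G}$ and the polynomial $p := f - f_{\min}$. The Archimedean hypothesis makes $S(G)$ compact, while the SOSC at every minimizer forces each global minimizer to be strict, so $p$ has only finitely many zeros on $S(G)$. The ``furthermore'' clause of Theorem \ref{scal:bhc} then delivers exactly that $\supp{\qmod{G}+\ideal{p}}$ is zero-dimensional, which is the statement of (i). The same application simultaneously recovers $p \in \qmod{G}$, which is the main content of Theorem \ref{mianthm1:local}, so these arguments fit together in one package.

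The main technical work is to verify the two remaining hypotheses of Theorem \ref{scal:bhc}, namely (a) $p \in \widehat{\qmod{G}}_u$ for every minimizer $u$, and (b) one of the alternatives (i) or (ii) there. For (a), at each minimizer $u$ with unique (by NDC) Lagrange multiplier $\Lambda$, I would use the Lagrangian decomposition $p = \langle \Lambda, G(x)\rangle + \bigl(L(x,\Lambda) - f_{\min}\bigr)$. The SCC permits writing $\Lambda = W^T W$ with $W$ of appropriate rank so that $\langle \Lambda, G\rangle = \langle G, W^T W\rangle$ is manifestly in $\qmod{G}$, while the SOSC together with the $H$-term controls the second-order behaviour of the reduced Lagrangian on the subspace $\mathcal{N}(u)$; a local change of coordinates near $u$ should then yield a formal power-series SOS-type representation of $p$ in $\widehat{\qmod{G}}_u$. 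For (b), alternative (ii) is the natural choice: near each minimizer $u$ one picks a polynomial $a \in \qmod{G}$ of the form $v^T G v$ with $v \in \mR[x]^m$ adapted to $\ker G(u)$ so that $\{a \geq 0\}$ cuts out the correct local side of $S(G)$ on $V_\mR(p)$. The hard step is (a) in the presence of the $H$-term and the potentially high corank of $G(u)$ --- this is precisely the obstruction that invalidates the scalar-case argument of \cite{nieopcd}, and I expect it to require careful use of the matrix Positivstellensatz of Scherer--Hol on a compact neighbourhood of $u$, combined with a matrix-valued Morse-type normal form adapted to the Lagrangian block structure.

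For part (ii), I would deduce the flat truncation from (i) together with the finite convergence of Theorem \ref{mianthm1:local}. Let $y^*$ be any optimizer of \reff{mom:local} for $k \ge N$ large, so that $\mathscr{L}_{y^*}(p) = 0$ and $L_{G}^{(k)}[y^*] \succeq 0$. These conditions force $\mathscr{L}_{y^*}$ to vanish on the degree-$2k$ truncation of $\supp{\qmod{G}+\ideal{p}}$. Because this support is zero-dimensional by (i), the quotient $\mR[x]/\supp{\qmod{G}+\ideal{p}}$ is a finite-dimensional $\mR$-vector space, which uniformly bounds $\rank M_k[y^*]$ for all $k$. The Curto--Fialkow style flat-extension theorems (see \cite{CF05,HenLas05,Lau05,hdlb,nie2013certifying}) then yield, for $k$ sufficiently large, an integer $t \in [d_G, k]$ with $\rank M_t[y^*] = \rank M_{t-d_G}[y^*]$, which is the desired flat truncation condition \reff{FT:y*}. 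The quantitative stabilization of moment matrix ranks in terms of $k$ is routine once the finite-dimensional structure of the support ideal has been established.
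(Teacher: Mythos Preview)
Your overall plan has the right shape, but there is a structural gap that the paper takes pains to avoid: you propose to apply Theorem~\ref{scal:bhc} \emph{directly} to the quadratic module $\qmod{G}$. As the paper stresses (Section~\ref{proofdif}(v) and Section~\ref{sec:proofs}), $\qmod{G}$ is in general \emph{not finitely generated} \cite{cj}, whereas Theorem~\ref{scal:bhc} is formulated for $\qmod{g}$ with $g$ a finite polynomial tuple. The paper's proof therefore does not apply Scheiderer's result to $\qmod{G}$ at all. Instead it manufactures a finite tuple $\Phi \subseteq \qmod{G}$ (see \reff{fin:phi}) with $S(\Phi)=S(G)$, verifies the hypotheses of Theorem~\ref{scal:bhc} for $\qmod{\Phi}$, and only then concludes for $\qmod{G}$ via the inclusion $\qmod{\Phi}\subseteq\qmod{G}$. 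Your part~(i) conclusion likewise comes from $\supp{\qmod{\Phi}+\ideal{p}}\subseteq\supp{\qmod{G}+\ideal{p}}$, not from Scheiderer applied to $\qmod{G}$.

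Two further points. First, for the local membership $p\in\widehat{\qmod{G}}_u$ your Lagrangian sketch and invocation of Scherer--Hol on a neighbourhood are not how the paper proceeds; it instead passes to the polynomial Schur complement $T(x)=p(x)^2 S(x)\in\mathcal{S}^{m-r}\qmod{G}$ (Section~\ref{sec:eqpmo}), which kills the $H$-term entirely (Proposition~\ref{propt}), and then runs a Marshall-style change of variables on $T$ (Theorem~\ref{mat:bhc}, Lemma~\ref{aule}). Second, in part~(ii) your claim that ``$\mathscr{L}_{y^*}$ vanishes on the degree-$2k$ truncation of $\supp{\qmod{G}+\ideal{p}}$'' is not automatic: an element $q$ of the support has representations $q,-q\in\qmod{G}+\ideal{p}$ with no a priori degree bound, and $\mathscr{L}_{y^*}(\eta\cdot p)=0$ for arbitrary multipliers $\eta$ requires a separate argument. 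The paper again works over the finite $\Phi$, uses an explicit Positivstellensatz identity $h_t^{2\ell}+\eta f+\sum\psi_i\phi_i+\psi_0=0$ from \cite[Cor.~7.4.2]{marshall2008positive}, and invokes \cite[Lemma~2.5]{nie2013certifying} to push $h_t$ into $\ker M_k[w^{(k)}]$ before reading off flatness from the finite basis of $\mR[x]/I(Z)$.
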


	\subsection{Difficulties from the scalar case to the matrix case}
	\label{proofdif}

	For the scalar constrained polynomial optimization problem
	\be  \label{sp0}
	\left\{ \baray{rl}
	\min & f(x)  \\
	\st &  	g_1(x)\geq 0,\cdots, g_m(x)  \geq 0, \\
	\earay \right.
	\ee
	%where $f,g_1,\dots,g_m\in\mR[x]$,
	it was shown in \cite{nieopcd} that
	the Moment-SOS hierarchy for solving \reff{sp0}  has finite convergence
	if the linear independence constraint qualification,
	strict complementarity and second order sufficient conditions in nonlinear programming
	hold at every minimizer of \reff{sp0}.
	We refer to \cite{nieopcd} for the details of these conditions.
	The proof uses the boundary Hessian condition (BHC)
	by Marshall \cite{mar06}, which requires a local smooth parametrization at each minimizer.
	When the BHC holds at every minimizer, Marshall \cite{mar06} showed that
	the assumptions of Theorem~\ref{scal:bhc} by Scheiderer are satisfied
	and thus $f-f_{\min}\in \qmod{g_1,\dots, g_m}$
	by Theorem \ref{scal:bhc}\footnote{
		In fact, Marshall has proved a weaker condition rather than item (ii)  in Theorem \ref{scal:bhc},
		which is also enough to get the result.
		Item (ii) was shown by Scheiderer in \cite{Sch09}.}.
	The proof of the finite convergence for the scalar optimization \reff{sp0}
	in \cite{nieopcd} is to show that if the above mentioned
	optimality conditions hold at every minimizer,
	then the BHC also holds, which then implies the finite convergence.
	
	\rev{
		%Let $S(g)$ be the feasible set of \reff{sp0} and let $u$ be feasible for \reff{sp0}.
		%		The BHC is said to hold at $u$ if:
		%		 (i) there exist a neighborhood $U$ of $u$ and $t_1,\dots,t_n$ with  $t_i=g_{v_i}$ $(i=1,\dots,r)$, $r\leq n$ such that $S(g)\cap U$ is defined by $t_1\geq 0, \dots,t_r\geq 0$;		
		%		(ii) Expand $f$ locally
		%		around $u$ as $f=f_0+f_1+f_2+f_3+\cdots$, with each $f_i$ being homogeneous of degree $i$ in $t$. Then, the linear form $f_1=c_1t_1+\cdots+c_rt_r$ for some positive constants $c_1>0,\dots,c_r>0$,  and the quadratic form $f_2(0,\ldots,0, t_{r+1},\ldots, t_n)$ is positive definite in $t_{r+1},\dots, t_n$.
	}
	%\textcolor{blue}{(Should we remove this part? It is essentially not used later at all.)}

	Theorem \ref{mianthm1} (also Theorem \ref{mianthm1:local}) can be viewed as
	a matrix analogue of finite convergence theory for the \rev{scalar} case in \cite{nieopcd}.
	However, Theorem \ref{mianthm1} cannot be \rev{exposed} by a straightforward extension of the
	results for the scalar case. The major differences and difficulties
	are shown  below.

	\bit

	\item[(i)]
	The optimality conditions in nonlinear semidefinite optimization have
	geometric properties that are different from those for scalar constrained optimization.
	When  $G = \diag(g_1, \ldots,g_m)$ is diagonal and the NDC holds at a feasible point $u\in \mR^n$,
	then there exists at most one index $i \in [m]$ such that $g_i(u)=0$,
	regardless of whether the gradients $\nabla g_1(u),\dots, \nabla g_m(u)$
	are linearly independent or not, as we have seen in Section \ref{seccq}.
	%Thus, the finite convergence result for the scalar case shown in \cite{nieopcd}
	%cannot be applied to prove Theorem~\ref{mianthm1}.
	Moreover, the second order sufficient condition \reff{sosc} for \reff{nsdp}
	involves the above mentioned ``$H$-term",
	which further complicates the proof. In contrast,
	this term does not appear in the  second order sufficient condition
	for nonlinear scalar constrained optimization.

	%The optimality conditions in nonlinear semidefinite optimization differ significantly from those in nonlinear programming, involving more complex geometry. For instance, the definition of the NDC is quite technical, particularly from an algebraic perspective. Additionally, the second-order sufficient condition \reff{sosc} in nonlinear semidefinite optimization includes the well-known and often-mentioned "$H$-term", which further complicates the proof.
	%	\textcolor{blue}{(Should we remove these parts? This does not explain anything, and is also repeated.)}

	\item[(ii)] The feasible set $S(G)$ of \reff{nsdp}
	usually has singular points on its boundary, even if the NDC holds.
	This is the case even if $G$ is linear.
	Since nonsingularity is a basic assumption in
	the finite convergence theory for scalar constrained optimization in \cite{nieopcd},
	it cannot be applied for  polynomial matrix optimization.
	We refer to \rev{relation} \reff{intexm} for such an example.

	\item[(iii)]
	To prove Theorem~\ref{mianthm1}, one may naturally look for appropriate scalarization
	of the polynomial matrix inequality $G(x)\succeq 0$ and then apply
	the finite convergence theory for the scalar case in \cite{nieopcd}.
	For this purpose,  one need to find polynomials $g_1,\dots,g_s\in \mR[x]$ such that
	\[
	S(G)=\{x\in \mR^n\mid  g_1(x)\geq 0,\dots,g_s(x)\geq 0\},
	\]
	and then \reff{nsdp} is equivalent to \reff{sp0}.
	%\be  \label{nsdp:equi}
	%\left\{ \baray{rl}
	%\min & f(x)  \\
	%\st &  g_i(x)\geq 0~(i=1,\dots,s), \\
	%\earay \right.
	%\ee
	To apply the results for the scalar case, we need each $g_i(x)\in \qmod{G}$.
	We also need to prove that if the NDC, SCC
	and SOSC hold at a minimizer $u$ of \reff{nsdp},
	then the linear independence constraint qualification condition, strict complementarity condition
	and second order sufficient condition in nonlinear programming hold at $u$ for \reff{sp0}.
	However, the typical scalarization may either violate the optimality conditions
	%(i.e., produces singular loci),
	or it may need polynomials that do not belong to $\qmod{G}$.

	For instance,  consider the polynomial matrix
	\[
	G(x)=\left[\begin{array}{lll}
		1- \|x\|^2 & ~0  & ~ 0\\
		0&~ x_1  & ~x_2\\
		0&~x_2   &~x_3 \\
	\end{array}\right].
	\]
	Then, $G(x)\succeq 0$ if and only if
	\[
	1-x_1^2-x_2^2-x_3^2\geq 0, ~\rev{\hat{G}(x)}=\left[\begin{array}{ll}
		x_1  & x_2\\
		x_2   &x_3 \\
	\end{array}\right]\succeq 0.
	\]
	A natural scalarization for $\rev{\hat{G}(x)}\succeq 0$ is that all its principal minors are nonnegative. Hence, the matrix inequality $G(x)\succeq  0$
	can be equivalently given by scalar inequalities
	\[
	1- \|x\|^2  \geq 0,~x_1\geq 0,~x_3\geq  0, ~x_1x_3-x_2^2\geq 0.
	\]
	
	%A natural scalarization is to consider the characteristic polynomial
	%%$\det[ \lambda I_m-G^*(x)]$  of $G^*(x)$, which can be expressed as
	%\[
	%\det[\lambda  I_2-G^*(x)] \,= \, \lambda^2-(x_1+x_3)\lambda+x_1x_3-x_2^2.
	%\]
	%Since $G^*(x)$ is symmetric, all its eigenvalues are real. So, $G^*(x)\succeq 0 $ if and only if $x_1+x_3\geq  0$, $x_1x_3-x_2^2\geq 0$. Hence, the matrix inequality $G(x)\succeq  0$
	%can be equivalently given by scalar inequalities
	%\[
	%1-x_1^2-x_2^2-x_3^2\geq 0,~x_1+x_3\geq  0, ~x_1x_3-x_2^2\geq 0.
	%\]
	However, \rev{the polynomial $x_1x_3-x_2^2$ does not belong to $\qmod{G}$,} as shown in Example \ref{exmdet}.
	
	Another scalarization can be motivated by \cite[Proposition 9]{sk09}.
	It gives finitely many scalar polynomials in $\qmod{G}$ to describe the set $S(G)$.
	By following the procedure in \cite{sk09}, the set $\qmod{G}$ can be equivalently given by
	\[
	x_1\geq 0,~x_3\geq 0,~x_1(x_1x_3-x_2^2)\geq 0,
	\]
	\[
	x_1+2x_2+x_3\geq 0,~x_3(x_1x_3-x_2^2)\geq 0,
	\]
	\[
	(x_1+2x_2+x_3)(x_3(x_1+2x_2+x_3)-(x_2+x_3)^2)\geq 0.
	\]
	However, the LICQC fails at $(0,0,0)$.
	%since the gradient of
	%$x_1(x_1x_3-x_2^2)$ at $u$ is zero.
	%	

	\item[(iv)] The proof of finite convergence theory for the scalar case
	relies on the boundary Hessian condition by Marshall \cite{mar06}.
	% which implies conditions appearing in Theorem \ref{scal:bhc}.
	However, there does not exist a matrix version of the BHC.
	% and the corresponding
	%local-global principle for psd matrix polynomials.
	%Hence, Theorem \ref{mianthm1} can also be viewed as a matrix version of local-global principle.
	%Furthermore, Lemma 4.3 is also new in  real algebraic geometry,
	%which establish a SOS representation for a new type of power series.
	%	
	A major difficulty for this is that
	the feasible set $S(G)$ is typically singular at a minimizer $u$,
	even if $G$ is linear.
	The reason is that at the minimizer $u$, the rank of $G(u)$
	is usually less than $m-1$ (see \cite{NRS10}).
	The BHC typically requires that the feasible set is nonsingular around $u$,
	so that a smooth  parametrization is available.

	\item[(v)] Recall that a quadratic module $M$ is said to be finitely generated
	if there exists a polynomial tuple $g=(g_1,\dots,g_m)$ such that $M=\qmod{g}$.
	However, the quadratic module $\qmod{G}$ is typically not finitely generated,
	as shown in \cite{cj}. The earlier results about SOS representations for
	nonnegative polynomials (e.g., Theorem~\ref{scal:bhc})
	is usually applicable to finitely generated quadratic modules.
	This issue gives another difficulty for the proof of Theorem~\ref{mianthm1}.

	\eit

	\subsection{Some examples}
	
	We give an illustrating example to demonstrate the
	finite convergence of the matrix Moment-SOS hierarchy \reff{sos:local}-\reff{mom:local}.

	\begin{exm}
		Consider the matrix constrained optimization problem:
		\be  \label{sucex2}
		\left\{ \baray{rl}
		\min & x_1x_3-x_2^2+x_1+x_3 \\
		\st &  \left[\begin{array}{lll}
			1- \|x\|^2  & 0  &  0\\
			0  &   x_1  & x_2\\
			0  &  x_2   & x_3 \\
		\end{array}\right]\succeq 0,
		\earay \right.
		\ee
		The optimal value $f_{\min} = 0$ and the unique minimizer is the origin $0$.
		One can verify the NDC, SCC and SOSC all hold at $0$.
		The Moment-SOS hierarchy of \reff{sos:local}-\reff{mom:local}
		has finite convergence. A numerical experiment
		%by {\tt YALMIP}
		indicates that $f_{2,sos} = f_{2,mom}=8.7136\cdot10^{-14}$
		and we get the minimizer $(0.0000,0.0000,0.0000)$.
	\end{exm}	
	
	%%%%%%%%%%%%%%%%%%%%
	\iffalse	
	The objective is the Robinson form which is nonnegative but not SOS (cf. [23]). The minimum $f_{\min }=0$, and the global minimizers are
	$$
	\frac{1}{\sqrt{3}}( \pm 1, \pm 1, \pm 1), \frac{1}{\sqrt{2}}( \pm 1, \pm 1,0), \frac{1}{\sqrt{2}}( \pm 1,0, \pm 1), \frac{1}{\sqrt{2}}(0, \pm 1, \pm 1) .
	$$
	
	The unit sphere is smooth, so the constraint qualification condition holds at every feasible point. There is no inequality constraint, so strict complementarity is automatically satisfied. It can be verified that the second order sufficiency condition (1.7) holds on all the global minimizers. For instance, at $u=\frac{1}{\sqrt{3}}(1,1,1)$,
	$$
	\nabla_x^2 L(u)=\frac{4}{9}\left(3 \cdot\left[\begin{array}{lll}
		1 & 0 & 0 \\
		0 & 1 & 0 \\
		0 & 0 & 1
	\end{array}\right]-\left[\begin{array}{l}
		1 \\
		1 \\
		1
	\end{array}\right]\left[\begin{array}{l}
		1 \\
		1 \\
		1
	\end{array}\right]^T\right), \quad G(u)^{\perp}=\left[\begin{array}{l}
		1 \\
		1 \\
		1
	\end{array}\right]^{\perp} .
	$$
	\fi
	%%%%%%%%%%%%%%%%%%	

	%\begin{exm}
	% Consider the optimization problem:
	% $$
	% \left\{\begin{array}{l}
		% \min x_1^6+x_2^6+x_3^6+3 x_1^2 x_2^2 x_3^2-x_1^4\left(x_2^2+x_3^2\right)-x_2^4\left(x_3^2+x_1^2\right)-x_3^4\left(x_1^2+x_2^2\right) \\
		% \text { s.t. } x_1^2+x_2^2+x_3^2=1 .
		% \end{array}\right.
	% $$
	%\end{exm}

	We would like to remark that none of the optimality conditions
	can be dropped in Theorems \ref{mianthm1:local} and \ref{mianthm2:local}.
	The following are the counterexamples \rev{ that illustrate this}.
	
	\begin{exm}\label{exmdet}	
		(i) Consider the optimization problem:
		\be \nn %\label{ex2}
		\left\{\begin{array}{l}
			\min~ 3 x_1+2 x_2 \\
			\st \left[\begin{array}{ll}
				x_1  &~ x_3  \\
				x_3&~ x_1^2-x_2^2- \|x\|^4   \\
			\end{array}\right]\succeq 0.
		\end{array}\right.
		\ee
		The minimum value $f_{\min} = 0$ and the origin $0$ is the unique minimizer.
		Note that $G(0)=0$. By direct calculation, one can see
		\[
		\operatorname{Im} \nabla G(0)=\left\{\bbm d_1 & d_2\\d_2 &0\\ \ebm :\, d_1,d_2\in \mR\right\},\,
		\operatorname{lin}(T_{\mathcal{S}_{+}^2}(G(0))) = \bbm 0 & 0 \\ 0 & 0 \ebm,
		\]
		\[
		\operatorname{Im} \nabla G(0)+\operatorname{lin}(T_{\mathcal{S}_{+}^2}(G(0)))  =
		\operatorname{Im} \nabla G(0)\ne \mathcal{S}^2.
		\]
		Thus, the NDC fails at $0$.
		The feasible set has nonempty interior, e.g.,
		the matrix $G(x)$ is positive definite for $x=(\frac{1}{2},0,0)$.
		Hence, the optimal value of the SOS relaxation \reff{sos:local} is achievable.
		We show that the hierarchy of \reff{sos:local}-\reff{mom:local}
		has no finite convergence. Suppose otherwise it did have,
		then there exist $\sigma\in \Sigma[x]$ and polynomial vectors
		$v_{i} \in \re[x]^2$, $i=1,\dots,N$,  such that
		\be \nonumber
		3 x_1+2 x_2 \, = \, \sigma+\sum\limits_{i=1}^{N} v_i^TGv_i.
		\ee
		Substituting $x_3=0$ in the above, we get
		\[
		3 x_1+2 x_2\in \qmod{x_1,x_1^2-x_2^2-\left(x_1^2+x_2^2\right)^2}
		\]
		which is not true as shown in \cite[Example 3.3]{nieopcd}.
		Therefore, the hierarchy of \reff{sos:local}-\reff{mom:local}
		fails to have finite convergence. \\
		%%%%%%%%%%%%%%%%%%%%%%%%%%%%%%%%%%%%%%%%%%%%%%%%%%%%%%%%
		\noindent
		(ii) Consider the optimization problem:
		\be \nn
		\left\{ \baray{rl}
		\min & x_1x_3-x_2^2 \\
		\st &
		\left[\begin{array}{lll}
			1- \|x\|^2 & 0  &  0\\
			0& x_1  & x_2\\
			0&x_2   &x_3 \\
		\end{array}\right]\succeq 0.
		\earay \right.
		\ee
		The minimum value is $0$.
		At the minimizer $0$,
		%we have
		%\[
		%G(0)=\left[\begin{array}{lll}
			%1 & 0  &  0\\
			%0& 0  & 0\\
			%0&0   &0 \\
			%\end{array}\right].
			%\]
			%a basis of $\operatorname{ker} G(0)$ is
			%given by the columns of $E = \bbm e_2 & e_3 \ebm$.
			%Since the gradient vectors of $x_1,x_2,x_3$ are linearly independent at $u$,
			%we know that the nondegeneracy condition holds at $u$.
			one can verify that  the nondegeneracy condition holds.
			The Lagrange multiplier matrix at $0$ is $\Lambda=0$,
			so the SCC fails. We show the hierarchy of \reff{sos:local}-\reff{mom:local}
			does not have finite convergence.
			For $x=(1/2,0,1/2)$, the matrix $G(x)$ is positive definite,
			so there are interior points. Hence,
			the optimal value of the SOS relaxation \reff{sos:local} is achievable.
			Suppose otherwise the hierarchy of \reff{sos:local}-\reff{mom:local}
			did have finite convergence, then we have
			% \be \nonumber
			% 3 x_1+2 x_2=\sum\limits_{i=1}^{r_1}\sigma_i^2+\sum\limits_{i=1}^{r_2}(v_{i1}^2x_1+v_{i2}^2(x_1^2-x_2^2-\left(x_1^2+x_2^2+x_3^2\right)^2)+2v_{i1}v_{i2}x_3).
			% \ee
			$
			x_1x_3-x_2^2\in \qmod{G},
			$
			and hence $x_1x_3\in \qmod{1-x_1^2-x_3^2,x_1,x_3}$, i.e.,
			there exist $\sigma_0,\sigma_1,\sigma_2,\sigma_3\in \Sigma[x_1,x_3]$ such that
			\[
			x_1x_3=\sigma_0+\sigma_1(1-x_1^2-x_3^2)+\sigma_2 x_1+\sigma_3x_3.
			\]
			%\ter{By substituting $(x_1,x_3)=0$ into the above identity,
				%we have that $ \sigma_0(0)=\sigma_1(0)=0$.
				%Since the left hand has no linear monomials $x_1$,
				%$x_3$,  $ \sigma_2(0)=\sigma_3(0)=0$.
				%Hence, we know that $\sigma_1\cdot(-x_1^2-x_3^2)$,
				%$\deg(\sigma_2 x_1)\geq 3$, $\deg(\sigma_3x_3)\geq 3$. Compare the lowest degree terms on both sides, we know that $x_1x_3$ is equal to the lowest degree terms of $\sigma_0+\sigma_1$, which  is SOS. Hence, this cannot hold.}
			The above representation implies that
			$\sig_0, \sig_1, \sig_2$ must vanish on $\{x_3=0, 1\ge x_1  \ge 0\}$, and
			$\sig_0, \sig_1, \sig_3$ must vanish on $\{x_1=0, 1\ge x_3  \ge 0\}$.
			This further implies that
			$\sig_0, \sig_1, \sig_2$ must vanish on $\{x_3=0 \}$ and
			$\sig_0, \sig_1, \sig_3$ must vanish on $\{x_1=0 \}$.
			Since they are all SOS, we know that
			$\sig_0, \sig_1$ are multiples of $x_1^2x_3^2$,
			$\sig_2$ is a multiple of $x_3^2$,
			and $\sig_3$ is a multiple of $x_1^2$.
			Hence, there are $\sigma_0^\prime,\sigma_1^\prime,
			\sigma_2^\prime, \sigma_3^\prime \in \Sigma[x_1,x_3]$ such that
			\[
			1= x_1x_3 \sigma_0^\prime+ x_1x_3 \sigma_1^\prime(1-x_1^2-x_3^2)+
			\sigma_2^\prime x_3+\sigma_3^\prime x_1.
			\]
			However, this cannot hold since the right hand side
			vanishes on $x_1=x_3 = 0$. \\
			%%%%%%%%%%%%%%%%%%%%%%%%%%%%%%%%%%%%%%%%%%%%%%%%%%%%%%%%%%%%%%%%%%%%%%%%%%%%%%
			\noindent
			(iii) Consider the optimization problem:
			\be \nn % \label{ex3}
			\left\{\begin{array}{rl}
				\min & x_1^4 x_2^2+x_1^2 x_2^4+x_3^6+x_4^6-3 x_1^2 x_2^2 x_3^2+
				0.01\cdot\left(x_1^6+x_2^6+x_3^6\right) \\
				\st  & \left[\begin{array}{rr}
					1-x_4^2  &~  x_3x_4  \\
					x_3x_4   & ~ 1- \|x\|^2  \\
				\end{array}\right]\succeq 0.
			\end{array}\right.
			\ee
			The minimum value $f_{\min} = 0$ and $0$ is the unique minimizer.
			One can verify that the NDC and SCC hold at $0$, but SOSC fails.
			The optimal value of the SOS relaxation \reff{sos:local} is achievable,
			since there are interior points. The hierarchy of \reff{sos:local}-\reff{mom:local}
			does not have finite convergence. Suppose otherwise it did have,
			then there exist $\sigma\in \Sigma[x]$ and polynomial vectors
			$v_{i} \in \re[x]^2$, $i=1,\ldots,N$, such that
			\be \nonumber
			f=\sigma+\sum\limits_{i=1}^{N}v_i^TGv_i.
			\ee
			Substituting $x_4=0$  in the above, we then get
			\be \nonumber
			x_1^4 x_2^2+x_1^2 x_2^4+x_3^6-3 x_1^2 x_2^2 x_3^2+0.01\cdot( \sum_{i=1}^3 x_i^6)
			\in \qmod{1- \|x\|^2},
			\ee
			which does not hold (see \cite[Example 6.25]{Lau09}).
			Therefore, the hierarchy of \reff{sos:local}-\reff{mom:local}
			fails to have finite convergence.
		\end{exm}

		\subsection{The SOS-convex case}
		
		When $f(x)$ and $-G(x)$ are SOS-convex,
		we show that the lowest order Moment-SOS relaxation is exact.
		Recall that $f(x)$ is SOS-convex if the Hessian $\nabla^2 f(x)$ \rev{belongs to} $\mathcal{S}^{n}\Sig[x]$,
		i.e., there exists a  polynomial matrix $P \in \mathbb{R}[x]^{\ell \times n}$,
		for some $\ell \in \mathbb{N}$, such that $\nabla^2 f =  P^T P$.
		The polynomial matrix $-G(x)$ is said to be SOS-convex
		if for every $\xi \in \mathbb{R}^{m}$, there exists a  polynomial matrix $F(x) \in \mathbb{R}[x]^{t \times n}$, for some $t \in \mathbb{N}$, such that
		$$
		-\nabla^2\left(\xi^T G(x) \xi\right)=F(x)^T F(x) .
		$$
		We remark that the coefficients of the above $F(x)$ may depend on $\xi$.
		We refer to \cite{HilNie08,kism,niepolymat,niebook}
		for related work on SOS-convex  polynomial matrices.

		\begin{thm}
			Suppose $f\in \mR[x]_{2d}$ and $G \in \mathcal{S}^m\mathbb{R}[x]$ with $\deg(G)\leq 2d$. 	
			Assume $f$ and $-G$ are SOS-convex. Then, we have:
			\bit
			
			\item [(i)]
			The lowest order moment bound in \reff{mom:local} is exact,
			i.e., $f_{d,mom} = f_{\min}$. Moreover, if there exists $u\in \mR^n$
			such that $G(u)\succ 0$, then the lowest order SOS bound in \reff{sos:local} is also exact,
			i.e., $f_{d,sos} = f_{\min}$.
			
			\item [(ii)]
			If $y^*$ is a minimizer of \reff{mom:local}, then the point
			$u^*=(y_{e_1}^*,\dots,y_{e_n}^*)$ is a minimizer of \reff{nsdp}.

			\eit
		\end{thm}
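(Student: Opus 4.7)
The plan is to prove both parts by analyzing an optimal moment vector. Let $y^*$ be a minimizer of the $d$th order moment relaxation \reff{mom:local} (the case $f_{\min} = -\infty$ is handled separately and trivially), and set $u^* \coloneqq (y^*_{e_1}, \ldots, y^*_{e_n})$ as prescribed in (ii). The cornerstone of the argument is the Jensen-type inequality for SOS-convex polynomials on truncated moment sequences (see \cite[Chapter 7]{niebook}): whenever $y \in \mR^{\mathbb{N}_{2k}^n}$ satisfies $y_0 = 1$ and $M_k[y] \succeq 0$, every SOS-convex $p \in \mR[x]_{2k}$ obeys $p(y_{e_1},\ldots,y_{e_n}) \leq \mathscr{L}_y(p)$.

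First I would apply this inequality to the SOS-convex objective $f$ of degree $\leq 2d$, obtaining $f(u^*) \leq \mathscr{L}_{y^*}(f) = f_{d,mom}$. Next I would verify that $u^*$ is feasible for \reff{nsdp}. For any $\xi \in \mR^m$, let $p_\xi(x) \coloneqq \xi^T G(x) \xi \in \mR[x]_{2d}$. Since $-G$ is SOS-convex by hypothesis, so is $-p_\xi$, and Jensen applied to $-p_\xi$ gives $p_\xi(u^*) \geq \mathscr{L}_{y^*}(p_\xi)$. To show that $\mathscr{L}_{y^*}(p_\xi) \geq 0$, I would exploit the Kronecker block structure of the localizing matrix: setting $t \coloneqq d - d_G$ and letting $v \in \mR^{\binom{n+t}{n}}$ be the vector representing the constant polynomial $1$ in the basis $[x]_t$ (so $v^T [x]_t = 1$), one obtains
\begin{equation*}
\mathscr{L}_{y^*}(p_\xi) \; = \; \sum_{i,j=1}^m \xi_i \xi_j \, v^T L_{G_{ij}}^{(d)}[y^*] v \; = \; (\xi \otimes v)^T L_G^{(d)}[y^*] (\xi \otimes v) \; \geq \; 0,
\end{equation*}
since $L_G^{(d)}[y^*] \succeq 0$. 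Therefore $\xi^T G(u^*) \xi \geq 0$ for every $\xi \in \mR^m$, i.e., $G(u^*) \succeq 0$, so $u^* \in S(G)$.

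Combining these bounds with weak duality $f_{d,mom} \leq f_{\min}$ and the feasibility inequality $f(u^*) \geq f_{\min}$ yields
\begin{equation*}
f(u^*) \;\leq\; \mathscr{L}_{y^*}(f) \;=\; f_{d,mom} \;\leq\; f_{\min} \;\leq\; f(u^*),
\end{equation*}
forcing equality throughout. This simultaneously proves (ii) and the moment part of (i). For the SOS bound, when some $u$ satisfies $G(u) \succ 0$, any measure with full-dimensional support contained in the open set $\{x : G(x) \succ 0\}$ produces, after normalizing its mass to $1$, a truncated moment vector $y$ whose moment matrix $M_d[y]$ and localizing matrix $L_G^{(d)}[y]$ are both strictly positive definite. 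This is precisely a Slater condition for the primal SDP \reff{mom:local}, so standard SDP strong duality gives $f_{d,sos} = f_{d,mom}$ with the supremum in \reff{sos:local} attained, and therefore $f_{d,sos} = f_{\min}$.

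The principal technical hurdle is the matrix-to-scalar bridge in the feasibility step: the SOS-convexity of $-G$ only yields scalar Jensen inequalities for each quadratic form $p_\xi = \xi^T G \xi$, and one must combine these with the Kronecker block expansion of the matrix localizing constraint to conclude $G(u^*) \succeq 0$. Once this bridge is in place, everything else---including handling $f$ itself---is a direct consequence of the scalar SOS-convex Jensen inequality, weak duality, and Slater-based SDP strong duality.
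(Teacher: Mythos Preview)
Your approach is essentially the paper's: apply the SOS-convex Jensen inequality to $f$ and to each $-\xi^TG\xi$, use the localizing constraint to deduce $u^*\in S(G)$, and chain the inequalities. Where the paper simply cites \cite{niepolymat} for the feasibility step $u^*\in S(G)$, you spell out the Kronecker computation $(\xi\otimes v)^T L_G^{(d)}[y^*](\xi\otimes v)\ge 0$ explicitly, which is a nice self-contained version of the same idea.

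There is one small gap in your treatment of part~(i). You begin by fixing a \emph{minimizer} $y^*$ of \reff{mom:local}, but $f_{d,mom}$ is defined as an infimum and nothing in the hypotheses guarantees it is attained; your parenthetical about $f_{\min}=-\infty$ addresses the wrong edge case. The paper sidesteps this by an $\varepsilon$-argument: for each $\varepsilon>0$ take a feasible $y$ with $\langle f,y\rangle<f_{d,mom}+\varepsilon$, apply Jensen to get $f_{\min}\le f(u)\le f_{d,mom}+\varepsilon$ with $u=(y_{e_1},\dots,y_{e_n})\in S(G)$, and let $\varepsilon\to 0$. Your argument for part~(ii) is fine as stated, since there the existence of $y^*$ is hypothesized.
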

		\begin{proof}
			(i) Consider the projection of the $d$th order moment relaxation \reff{mom:local}
			\[
			\Gamma:=\left\{\begin{array}{l|l}
				x \in \mathbb{R}^n & \begin{array}{c}
					\exists~ y \in \mathbb{R}^{\mathbb{N}_{2 d}^n},~~y_0=1, \\
					L_{G}^{(d)}[y] \succeq 0, M_d[y] \succeq 0, \\
					x=\left(y_{e_1}, \ldots, y_{e_n}\right)
				\end{array}
			\end{array}\right\}.
			\]
			Under the assumptions that $-G$ is SOS-convex,
			it was shown in \cite{niepolymat} that $S(G)=\Gamma$.
			For each $\epsilon>0$,  there exists a feasible $y$ of \reff{mom:local} such that
			$
			\langle f, y\rangle < f_{d,mom}+\epsilon .
			$
			Note that $y_0=1$ and $M_d[y] \succeq 0$.
			Since $f$ is SOS-convex, Jensen's inequality holds
			(see \cite{Las09} or \cite[Chapter~7]{niebook})
			for the point $u:=\left(y_{e_1}, \ldots, y_{e_n}\right)$, i.e.,
			$$
			f(u) \leq\langle f, y\rangle<f_{d,mom}+\epsilon.
			$$
			Since $u \in S(G)$, we get $f_{\min } \leq f(u) \leq f_{d,mom}+\epsilon$.
			This holds for all $\epsilon>0$, so $f_{\min } \leq f_{d,mom}$.
			On the other hand, $f_{d,mom} \leq f_{\min}$,
			therefore we must have $f_{d,mom}=f_{\min}$.
			Moreover, if there exists $u\in \mR^n$ such that $G(u)\succ 0$,
			then the Slater condition holds for the primal-dual pair \reff{sos:local}--\reff{mom:local}.
			Then, the strong duality holds between them, so $f_{d,sos}=f_{d,mom}=f_{\min}$.
			
			(ii) Suppose $y^*$ is a minimizer of \reff{mom:local}.
			Since $f$ and $-G$ are SOS-convex, Jensen's inequality
			(see \cite{Las09} or \cite[Chapter~7]{niebook})
			implies that
			\[
			f_{\min } \leq f(u^*) \leq\left\langle f, y^*\right\rangle=f_{d,mom}=f_{\min}.
			\]
			Since $u^*\in S(G)$, we know that $u^*$ is a minimizer of \reff{nsdp}.

		\end{proof}

		\begin{exm}
			Consider the optimization problem:
			\be  \nn %%  \label{sucex3}
			\left\{ \baray{rl}
			\min & \frac{1}{3}(x_1^4+x_2^4+x_3^4)+x_1^2x_2^2+(x_1-1)^2+(x_2-1)^2+(x_3-1)^2 \\
			~\\
			\st &  \left[\begin{array}{rrr}
				2-x_1^2-2 x_3^2 & 1+x_1 x_2 & x_1 x_3 \\
				1+x_1 x_2 & 2-x_2^2-2 x_1^2 & 1+x_2 x_3 \\
				x_1 x_3 & 1+x_2 x_3 & 2-x_3^2-2 x_2^2
			\end{array}\right] \succeq 0.
			\earay \right.
			\ee
			Both $f$ and $-G$ are SOS-convex (see \cite{niepolymat}).
			The relaxation pair \reff{sos:local}-\reff{mom:local} has finite convergence for $k=2$.
			A numerical experiment
			indicates that $f_{2,sos} = f_{2,mom} \approx 1.1321$
			and we can get the minimizer $(0.3977,0.3096,0.5057)$.
		\end{exm}

		%\section{Local SOS representations}
		%\label{sec:localsos}

		\section{Proofs of the main theorems}
		\label{sec:proof}
		
		%This section gives proofs for Theorems~\ref{mianthm1} and \ref{mianthm2},
		%with the usage of results shown in the previous sections.
		This section gives proofs for Theorems~\ref{mianthm1} and \ref{mianthm2}.
		The proof of Theorem \ref{mianthm1} uses SOS type representations
		for nonnegative polynomials,
		%% having only finitely many zeros,
		which is given in Theorem~\ref{scal:bhc} by Scheiderer~\cite{Sch05}.
		The proof of Theorem \ref{mianthm2} uses some results
		in the proof of Theorem~\ref{mianthm1} and exploits some properties of moment matrices. The proof of Theorem~\ref{mianthm1} is given in Section \ref{proofthm1}
		and the proof of Theorem \ref{mianthm2} is given in Section \ref{proofthm2}.
		Sections~\ref{sec:eqpmo} and \ref{sec:localsos} give some preliminary results for the proofs.

		\subsection{Overview of the proofs }
		\label{sec:proofs}
		
		Recall that the feasible set \rev{of \reff{nsdp} is}  $S(G)=\{x\in \mR^n:  G(x)\succeq 0\}$.
		For a quadratic module $M$, the notation $\widehat{M}_u$ denotes the quadratic module
		generated by the image of $M$ in the completion ring of the localization of $\mR[x]$ at $u$ (see Section \ref{sec:qm} for the  notation).
		The main idea in the proof of Theorem \ref{mianthm1} is to find a polynomial tuple $\Phi= \{\phi_1,\dots,\phi_s\}$ (see \reff{fin:phi}) such that:

		\bit
		
		\item[(i)] $\qmod{\Phi}$ is \rev{Archimedean} and each $\phi_i\in \qmod{G}$.

		\item[(ii)]	$S(G)=\{x\in \mR^n: \, \phi_i(x)\geq 0,~ i=1,\dots,s\}$.
		
		\item[(iii)]
		For every minimizer $u$ of \reff{nsdp},	we have
		$f-f_{\min} \in \widehat{\qmod{\Phi}}_u$ (also known as the local SOS representation).

		\item[(iv)] %For every minimizer $u$ of \reff{nsdp},
		There exist a neighborhood $U$ of $u$ in $\mR^n$ and an element $a \in \qmod{\Phi}$
		such that
		\[
		\{x\in \mR^n: a(x) \geq 0\} \cap V_{\mR}(f-f_{\min}) \cap U \subseteq S(G).
		\]

		\eit
		%
		%under the conditions that the NDC, SCC, SOSC hold
		%at every minimizer of \reff{nsdp}.
		%
		\rev{The above claims (i)--(iv)} show that the assumptions in Theorem \ref{scal:bhc} are satisfied
		for the quadratic module $\qmod{\Phi}$ and the set $S(G)$.
		Then, Theorem \ref{scal:bhc} implies that $f-f_{\min}\in \qmod{\Phi}$.
		Since each $\phi_i\in \qmod{G}$, we have $f-f_{\min}\in \qmod{G}$,
		which completes the proof of Theorem \ref{mianthm1}.
		It is worthy to remark that we do not prove $f-f_{\min}\in\qmod{G}$ directly,
		as \rev{the latter quadratic module} is not finitely generated generally \cite{cj}.
		To obtain the above polynomial set $\Phi= \{\phi_1,\dots,\phi_s\}$,
		we need to thoroughly investigate the relationship between the optimality conditions
		(i.e., NDC, SCC, SOSC) and the local SOS representations at each minimizer.
		The proof can be done in four major steps:

		\bigskip
		\noindent
		{\bf Step 1.}  For optimization \reff{nsdp}, the nondegeneracy condition \reff{CQ}
		is quite tricky. The second-order sufficient condition \reff{sosc}
		includes a challenging ``$H$-term", which makes it difficult to obtain local SOS representations.
		In Section \ref{sec:eqpmo}, we give a  locally  equivalent reformulation for \reff{nsdp}.
		Suppose $u$ is a local minimizer of \reff{nsdp},  with $\rank\, G(u) =r$.
		We prove that there exists a  polynomial matrix $T\in \mathcal{S}^{m-r}\qmod{G}$  such that
		in a neighborhood $U$ of $u$, \reff{nsdp} is equivalent to
		\be   \label{intro1}
		\left\{ \baray{rl}
		\min & f(x)  \\
		\st &  	T(x)\succeq 0,~x\in U. \\
		\earay \right.
		\ee
		Furthermore, we show the optimality conditions for \reff{nsdp} and \reff{intro1}
		are equivalent (see Proposition  \ref{propt}).
		By doing this, the NDC holds at $u$ if and only if
		$\operatorname{ker} \nabla T(u)^*=\{0\}$
		and the challenging ``$H$-term" in  \reff{sosc} disappears.
		We remark that the property $T\in \mathcal{S}^{m-r}\qmod{G}$
		is important since it preserves the algebraic structure.
		
		\bigskip
		\noindent
		{\bf Step 2.}
		In Section \ref{sec:localsos}, we prove that if the NDC, SCC and SOSC
		hold at a local minimizer $u$, then $f-f_{\min} \in \widehat{\qmod{G}}_u$,
		by using the reformulation \reff{intro1}. That is, there exist
		$g_u^{(1)},g_u^{(2)},\dots,g_u^{(t_u)} \in \qmod{G}$
		and $\sigma_u^{(1)},\dots,\sigma_u^{(t_u)}$
		which are SOS in  $\widehat{\mathbb{R}[x]}_{u}$, such that
		\[
		f-f_{\min}=\sigma_u^{(1)}g_u^{(1)}+\cdots+\sigma_u^{(t_u)}g_u^{(t_u)}.
		\]
		This is shown in  Theorem~\ref{mat:bhc}. This proof is completed in two steps:
		
		\medskip
		\noindent
		{\bf Step 2a.} We first prove that if a power series $f$ in the new variables
		$v=(v_1,\dots,v_n)$ can be expanded as in \reff{expand0},
		then $f$ lies in the quadratic module in $\mathbb{R}[[v]]$,
		which is generated by the image of $\qmod{V}$, i.e., $f\in \widehat{\qmod{V}}_0$.	
		See Lemma \ref{aule}.
		
		%	Let  $0\leq r \leq n$ be an integer with $\binom{m-r}{2}\leq n$ and let the vector $v=\left(v_1, \ldots, v_n\right)$ consisting of $n$ new variables. Let
		%	$V \in \mathcal{S}^{m-r} \mathbb{R}[v]$  whose upper triangular part is
		%\be \nonumber
		%\left[\begin{array}{clcl}
			%	v_1 &v_{2}&\dots &v_{ m-r } \\
			%	&v_{m-r+1}&\dots & v_{2(m-r)-1} \\
			%	& &\ddots & \vdots \\
			%	& & &v_{\sigma(m-r) } \\
			%\end{array}\right].
			%\ee We first prove that if the power series $f$ can be expanded as
			%\be \label{local3}
			%f=\langle C,  V\rangle+f_2(v)+\sum\limits_{i=3}^{\infty}f_i  \in
			%\mathbb{R}\left[\left[v_1, \ldots, v_n \right]\right],
			%\ee
			%where  $C \in \mathcal{S}_{+}^{m-r}$, the quadratic form $		f_2(0,\ldots,0, v_{\binom{m-r}{2} +1},\ldots, v_n)$
			%	is positive definite in $v_{\binom{m-r}{2} +1},\dots, v_n$, and $f_i$ is the degree-$i$ homogeneous part of  $f$.
			%Then, $f$ lies in the quadratic module %in
			%$\mathbb{R}[[x]]$, which is
			%generated by the image of $\qmod{V}$, i.e., $f\in \widehat{\qmod{V}}_0$.	 Please see Lemma \ref{aule}.
			
			\medskip
			\noindent
			{\bf Step 2b.} Up to an invertible linear coordinate transformation
			and shifting by a constant, we can generally assume that $f_{\min}=0$ and $u=0$.
			Using the reformulation \reff{intro1}, we show that if the NDC, SCC and SOSC hold at $u$,
			then there exist new variables $v_1,\dots,v_n$ with $V\in \mathcal{S}^{m-r}\qmod{G}$
			such that $f$ admits an expansion as in \reff{expand0}. Then, we show that
			$f-f_{\min} \in \widehat{\qmod{T}}_u$ and $f-f_{\min} \in \widehat{\qmod{G}}_u$.
			%% since $V\in \mathcal{S}^{m-r}\qmod{G}$,
			This is done in the proof of Theorem \ref{mat:bhc}.
			We refer to Section~\ref{sec:localsos} for the details.

			\bigskip
			\noindent
			{\bf Step 3.} In Theorem \ref{exist:a}, we show that if the NDC, SCC and SOSC
			hold at $u$, then   there are a neighborhood $U$ of $u$
			and an element $a_u \in \qmod{G}$ such that
			\[
			\{x\in \mR^n: a_u(x) \geq 0\} \cap V_{\mR}(f-f_{\min}) \cap U \subset S(G).
			\]

			\bigskip
			\noindent
			{\bf Step 4.} During the above steps, we can get a  polynomial tuple
			$\Phi= \{\phi_1,\dots,\phi_s\}$ (see \reff{fin:phi}) with each
			$\phi_i\in \qmod{G}$ such that all conditions described
			at the beginning of this subsection are satisfied.
			Clearly, the quadratic module  $\qmod{\Phi}$ is finitely generated.
			Since $\qmod{\Phi}\subseteq \qmod{G}$, Theorem \ref{scal:bhc}
			can be applied to conclude $f-f_{\min}\in \qmod{G}$.
			This is shown in Section~\ref{proofthm1}.

			To prove Theorem~\ref{mianthm2}, we alternatively consider
			the scalar polynomial optimization problem
			\be  \label{n:pop1}
			\left\{ \baray{rl}
			\min & f(x)  \\
			\st &  	\phi(x) \geq  0 \, \, ( \, \phi \in \Phi). \\
			\earay \right.
			\ee
			Since each $\phi_i\in \qmod{G}$ and $f-f_{\min}\in \qmod{\Phi}$,
			the Moment-SOS hierarchy for solving \reff{n:pop1} has finite convergence
			and every minimizer of \reff{mom:local} also gives a minimizer of the moment relaxation
			for \reff{n:pop1}, by using subvectors. Then,
			we show that every minimizer of the moment relaxation for \reff{n:pop1}
			satisfies the flat truncation when the relaxation order is big enough.
			The proof of Theorem \ref{mianthm2} uses $f-f_{\min}\in \qmod{\Phi}$,
			which is given in the proof of Theorem~\ref{mianthm1}. Furthermore, we
			exploit some properties of moment matrices to complete the proof,
			which is of independent interest.

			\subsection{A locally equivalent reformulation}
			\label{sec:eqpmo}

			Let $u$ be a local minimizer of \reff{nsdp} with $\rank\, G(u) =r$.
			In this subsection, we give a locally equivalent reformulation for \reff{nsdp},
			in a neighborhood $U$ of $u$. This yields a reduced
			polynomial matrix  optimization problem,
			with the new psd constraint $T(x) \succeq 0$,
			for some $T \in \mathcal{S}^{m-r}\mathrm{QM}[G]$.
			By doing this, the ``$H$-term" in the second order sufficient condition \reff{sosc}
			disappears, while algebraic structures of $G(x)$ do not change.
			Furthermore, it also gives a convenient local parametrization,
			which is very useful for getting local SOS representations in Section~\ref{sec:localsos}.
			The new locally equivalent reformulation is motivated by the approach in \cite{fa}.

			Up to a permutation of rows and columns, one can write that
			$$
			G(x)=\left[\begin{array}{cc}
				A(x) & B(x) \\
				B(x)^T & C(x)
			\end{array}\right],
			$$
			where $A(x)\in \mathcal{S}^r\mR[x]$  and $\rank \, A(u) =r$.
			Thus, in a neighborhood $U$ of $u$, the matrix $A(x)$ is nonsingular.
			Denote the Schur complement:
			$$
			S(x):=C(x)-B(x)^T A(x)^{-1} B(x) .
			$$
			Then it holds that
			\be \label{defS}
			G(x)=\left[\begin{array}{cc}
				I_{r} & 0 \\
				B(x)^{T} A(x)^{-1} & I_{m-r}
			\end{array}\right]\left[\begin{array}{cc}
				A(x) & 0 \\
				0 & S(x)
			\end{array}\right]\left[\begin{array}{cc}
				I_{r} & A(x)^{-1} B(x) \\
				0 & I_{m-r}
			\end{array}\right],
			\ee
			for all $x$ in a neighborhood $U$ of $u$.
			Hence, we know that $G(x) \succeq 0$ is equivalent to $S(x)\succeq 0$ in $U$,
			and \reff{nsdp} is locally equivalent to
			\be  \label{re:nsdp}
			\left\{ \baray{rl}
			\min & f(x)  \\
			\st &  	S(x)\succeq 0,~x\in U. \\
			\earay \right.
			\ee
			The gradient operator $\nabla S(u)$ is said to be {\it regular}
			\cite{dgs}
			if  the adjoint mapping
			$\nabla S(u)^*: \mathcal{S}^{m-r} \rightarrow \mathbb{R}^n$
			is one-to-one, i.e.,
			$$
			\operatorname{ker} \nabla S(u)^*=\{0\} \subset \mathcal{S}^{m-r}.
			$$
			In particular, the regularity of $\nabla S(u)$ implies that
			\[
			\sigma(m-r) %%=(m-r)(m-r+1)/2
			= \binom{m-r}{2} \le  n.
			\]
			%The following result was proved in \cite{fa},
			%also referring to \cite{dgs} for new proofs.
			The following result appeared in \cite{dgs,fa}.

			\begin{thm}\label{reopt}
				Suppose $u$ is a local minimizer of \reff{nsdp}, \rev{and
					the nondegeneracy condition \reff{CQ}, strict complementarity condition \reff{SCC},
					second order sufficient condition \reff{sosc}  hold at $u$.} Then, $u$ is also a local minimizer of
				\reff{re:nsdp} and there exists a matrix $\Lambda \in \mathcal{S}^{m-r}$ such that
				\be \label{rkkt}
				\begin{array}{rcl}
					\nabla L_1(u) &=& 0, \\
					\Lambda \succeq 0, \quad  S(u) &\succeq& 0, \\
					\langle \Lambda,  S(u)\rangle &=& 0,
				\end{array}
				\ee
				where
				$
				L_1(x) \, \coloneqq \,   f(x)-\langle \Lambda,  S(x)\rangle .
				$
				Furthermore, it also holds:
				\bit
				
				\item [(i)] The  linear mapping $\nabla S(u)$ is regular.

				\item [(ii)] $\Lambda \succ 0$.

				\item [(iii)] For all $0\neq h \in \operatorname{ker} \nabla S(u)$,
				$h^{T} \nabla^2  L_1(u) h>0$.
				
				\eit
			\end{thm}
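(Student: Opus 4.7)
The plan is to leverage the Schur factorization \reff{defS} throughout and to tie the data of \reff{nsdp} at $u$ to that of the reduced problem \reff{re:nsdp} via the matrix
\[
E \,\coloneqq\, \bbm -A(u)^{-1} B(u) \\ I_{m-r} \ebm,
\]
whose columns span $\ker G(u)$. Inside $U$, \reff{defS} is a smooth congruence with $P(x)$ invertible, so $G(x) \succeq 0 \Leftrightarrow S(x) \succeq 0$ while $f$ is unchanged; hence $u$ is automatically a local minimizer of \reff{re:nsdp}. Because $\rank G(u) = r = \rank A(u)$, reading off \reff{defS} at $u$ gives $S(u) = 0$, and combining this with $\Lambda_{\mathrm{full}} G(u) = 0$, $\Lambda_{\mathrm{full}} \succeq 0$ for the unique multiplier $\Lambda_{\mathrm{full}}\in \mathcal{S}^m$ of \reff{kkt}, one obtains the factorization $\Lambda_{\mathrm{full}} = E \Lambda E^T$ for a unique symmetric $\Lambda \in \mathcal{S}^{m-r}$. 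This $\Lambda$ is my candidate multiplier for \reff{re:nsdp}.

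The first-order portion of \reff{rkkt} reduces to the key sandwich identity
\[
\langle \nabla_{x_i} G(u), \Lambda_{\mathrm{full}} \rangle \;=\; \langle \nabla_{x_i} S(u), \Lambda \rangle, \qquad i=1,\dots,n,
\]
which I would prove by writing $G = P D P^T$ with $D = \diag(A,S)$, expanding $\nabla_{x_i} G$ by the product rule, and annihilating the two outer terms via $D(u) P(u)^T E = 0$ (itself a consequence of $S(u)=0$); the surviving middle term collapses to $\langle \nabla_{x_i} S(u), \Lambda \rangle$ because the $2\times 2$ block computation yields $P(u)^T \Lambda_{\mathrm{full}} P(u) = \diag(0, \Lambda)$. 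Combined with the FOOC of \reff{nsdp} this gives $\nabla L_1(u) = 0$; the remaining pieces of \reff{rkkt}, namely $\Lambda \succeq 0$, $S(u) \succeq 0$, $\langle \Lambda, S(u)\rangle = 0$, follow from the congruence invariance of positive semidefiniteness (since $P(u)$ is invertible) and from $S(u) = 0$.

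For (i) and (ii), the same sandwich applied to an arbitrary $W \in \mathcal{S}^{m-r}$ gives $\nabla G(u)^*[E W E^T] = \nabla S(u)^*[W]$. Dualizing \reff{CQ}, it is equivalent to saying that any matrix lying both in $\ker \nabla G(u)^*$ and in $\{E W E^T : W \in \mathcal{S}^{m-r}\} = \operatorname{lin}(T_{\mathcal{S}_{+}^m}(G(u)))^\perp$ must be zero, so $\nabla S(u)^*[W] = 0$ forces $E W E^T = 0$, and the full column rank of $E$ then forces $W = 0$, proving (i). For (ii), since $\rank(E W E^T) = \rank W$, the \reff{SCC} $\rank G(u) + \rank \Lambda_{\mathrm{full}} = m$ rearranges to $\rank \Lambda = m - r$; combined with $\Lambda \succeq 0$ this yields $\Lambda \succ 0$.

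Part (iii) is the main obstacle. A parallel sandwich shows $E^T \nabla_{x_i} G(u) E = \nabla_{x_i} S(u)$, so $\mc{N}(u) = \ker \nabla S(u)$, transferring the SOSC quadratic form to the correct tangent space. The real work is the identity
\[
h^T \nabla^2 L_1(u)\, h \;=\; h^T \bigl( \nabla^2 L(u,\Lambda_{\mathrm{full}}) + H(u,\Lambda_{\mathrm{full}}) \bigr) h \qquad \forall\, h \in \mc{N}(u),
\]
which I would obtain by differentiating $\langle \Lambda_{\mathrm{full}}, G(x) \rangle - \langle \Lambda, S(x) \rangle$ twice at $u$. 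The second derivatives of $A(x)^{-1}$ hidden inside $S = C - B^T A^{-1} B$ produce precisely the quadratic form $2\langle \Lambda_{\mathrm{full}}, \nabla_{x_i} G(u) G(u)^\dagger \nabla_{x_j} G(u) \rangle$, once one uses the explicit expression $G(u)^\dagger = P(u)^{-T} \diag(A(u)^{-1}, 0) P(u)^{-1}$ together with $\Lambda_{\mathrm{full}} = E \Lambda E^T$; all cross terms cancel on $h \in \mc{N}(u)$. Once this identity is in hand, \reff{sosc} applied on $\mc{N}(u) \setminus \{0\}$ yields (iii) immediately. This second-derivative bookkeeping --- isolating the $H$-term from the curvature of the Schur complement --- is the only genuinely delicate computation in the argument.
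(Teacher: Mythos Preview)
The paper does not prove this theorem at all; it is stated with the attribution ``The following result appeared in \cite{dgs,fa}'' and no argument is given. So there is no proof in the paper to compare against. Your outline is essentially a reconstruction of the Forsgren argument from \cite{fa}, and it is sound: the Schur factorization \reff{defS}, the basis $E$ of $\ker G(u)$, the factorization $\Lambda_{\mathrm{full}}=E\Lambda E^{T}$, the sandwich identity $E^{T}\nabla_{x_i}G(u)E=\nabla_{x_i}S(u)$, the dualization of \reff{CQ} for item~(i), and the rank count for item~(ii) are all correct.

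There is one genuine slip in part~(iii). The matrix you call $G(u)^{\dagger}=P(u)^{-T}\diag(A(u)^{-1},0)P(u)^{-1}$ is \emph{not} the Moore--Penrose inverse: a direct computation shows it equals $\diag(A(u)^{-1},0)$, and then $G(u)\cdot\diag(A(u)^{-1},0)=\bbm I_r & 0\\ B(u)^{T}A(u)^{-1} & 0\ebm$ is not symmetric unless $B(u)=0$. What you have is only a reflexive $\{1,2\}$-inverse. Your second-derivative computation will therefore produce the $H$-term with this inverse, not with $G(u)^{\dagger}$ as the paper defines it. The argument can be rescued because the quadratic form $h\mapsto 2\langle\Lambda_{\mathrm{full}},(\nabla G(u)[h])\,G^{-}\,(\nabla G(u)[h])\rangle$ is independent of the choice of $\{1\}$-inverse $G^{-}$ once $h\in\mc{N}(u)$ and $\Lambda_{\mathrm{full}}G(u)=0$: writing $\Pi=G(u)G(u)^{\dagger}$ for the orthogonal projector onto $\operatorname{Im}G(u)$, the hypothesis $h\in\mc{N}(u)$ gives $\Pi^{\perp}(\nabla G(u)[h])\Pi^{\perp}=0$, complementarity gives $\Lambda_{\mathrm{full}}=\Pi^{\perp}\Lambda_{\mathrm{full}}\Pi^{\perp}$, and any two $\{1\}$-inverses differ by a matrix $\Delta$ with $\Pi\Delta\Pi=0$; combining these three facts annihilates the discrepancy. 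You should insert this invariance argument (or work with the true $G(u)^{\dagger}$ throughout), since without it the identification of your curvature term with the paper's $H(u,\Lambda_{\mathrm{full}})$ is unjustified.
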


			\subsubsection{Equivalent polynomial matrix optimization}
			
			In optimization \reff{re:nsdp}, the matrix $S(x)$ is generally a rational function,
			so we may not have $S(x) \in \mathcal{S}^{m-r}\mathrm{QM}[G]$.  Multiplying $S(x)$ by
			$(\det\,A(x))^2$, we can get  a new  locally  equivalent reformulation of \reff{nsdp}. We show that the NDC, SCC and SOSC
			still hold for the new formulation.
			
			For convenience, denote
			\[
			p(x) = \det\, A(x),\quad T(x)=p(x)^2 S(x),
			\]
			\[
			Q(x)=p(x) \left[\begin{array}{cc}
				I_{r} & -A(x)^{-1} B(x) \\
				0 & I_{m-r}
			\end{array}\right].
			\]
			Note that $p(x)A(x)^{-1} =A(x)^{*}$, which is the adjoint matrix of $A(x)$.
			Since $A(x)^*$ is a polynomial matrix, we have $Q(x)\in \mR[x]^{m\times m}$. Using \reff{defS},
			one can verify  that
			\be \label{defT}
			Q(x)^T G(x)Q(x)=\left[\begin{array}{cc}
				p(x)^2	A(x) & 0 \\
				0 & T(x)
			\end{array}\right].
			\ee
			So, we have $T(x) \in \mathcal{S}^{m-r}\mathrm{QM}[G]$.
			Since $p(u)\neq 0$, we know that
			$G(x) \succeq 0$ is equivalent to $T(x) \succeq 0$ in a neighborhood $U$ of $u$.
			Thus, \reff{nsdp} is equivalent to
			\be  \label{re:nsdpt}
			\left\{ \baray{rl}
			\min & f(x)  \\
			\st &  	T(x)\succeq 0,~x\in U. \\
			\earay \right.
			\ee
			Note that
			$u$ is still a local minimizer of \reff{re:nsdpt}.

			The following  proposition  is the analogue of Theorem \ref{reopt}  for $T$, instead of $S$.
			
			\begin{prop} \label{propt}
				Suppose $u$ is a minimizer of \reff{nsdp}, \rev{and
					the nondegeneracy condition \reff{CQ}, strict complementarity condition \reff{SCC},
					second order sufficient condition \reff{sosc}  hold at $u$.}
				Then, there exits $\Theta \in \mathcal{S}^{m-r}$ such that
				\be \label{prkkt}
				\begin{array}{rcl}
					\nabla L^{re}(u) &=& 0, \\
					\Theta \succeq 0,\quad  T(u) &\succeq& 0, \\
					\langle \Theta,  T(u)\rangle &=& 0,
				\end{array}
				\ee
				where
				$
				L^{re}(x):=f(x)-\langle \Theta,  T(x)\rangle .
				$
				Furthermore, the following holds:
				\bit
				
				\item [(i)] The  linear mapping $\nabla T(u)$ is regular.
				
				\item [(ii)] $\Theta \succ 0$.
				
				\item [(iii)] For all $0\neq h \in \operatorname{ker} \nabla T(u)$,
				$h^{T} \nabla^2 L^{re}(u) h>0$.
				
				\eit
			\end{prop}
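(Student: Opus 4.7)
My plan is to derive Proposition~\ref{propt} directly from Theorem~\ref{reopt} by tracking how multiplication by $p(x)^2$ transforms the Lagrangian and its derivatives, where $p(x) = \det A(x)$ is a scalar polynomial nonzero at $u$. The natural guess is $\Theta = p(u)^{-2}\Lambda$ with $\Lambda$ as in Theorem~\ref{reopt}. Since everything boils down to manipulating $T(x)=p(x)^2 S(x)$, the whole argument hinges on two elementary facts: first, $S(u)=0$ (because $\operatorname{rank}\, G(u)=r = \operatorname{rank}\, A(u)$ forces the Schur complement to vanish at $u$), and second, $p(u)\neq 0$. Together these kill the ``extra'' terms that the product rule introduces when passing from $S$ to $T$.

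First I would use $S(u)=0$ to compute $\nabla T(u)[d] = 2p(u)\nabla p(u)[d]\cdot S(u) + p(u)^2 \nabla S(u)[d] = p(u)^2 \nabla S(u)[d]$, so $\nabla T(u) = p(u)^2 \nabla S(u)$. This immediately gives $\ker \nabla T(u) = \ker \nabla S(u)$, $\operatorname{ker}\nabla T(u)^{*} = \operatorname{ker}\nabla S(u)^{*} = \{0\}$ by Theorem~\ref{reopt}(i), and $\nabla T(u)^{*}[\Theta] = p(u)^{2}\nabla S(u)^{*}[\Theta] = \nabla S(u)^{*}[\Lambda]$ when $\Theta = \Lambda/p(u)^{2}$. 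Combined with $T(u)=p(u)^2 S(u) = 0$ and $\Lambda \succ 0$, this yields the KKT system \reff{prkkt}, assertion (i), and assertion (ii) all at once.

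The one nontrivial calculation is (iii). Set $\phi(x) \coloneqq \langle \Theta, S(x)\rangle$, so that $L^{re}(x) = f(x) - p(x)^{2}\phi(x)$ and, on the other side, $L_{1}(x) = f(x) - p(u)^{2}\phi(x)$ (because $\Lambda = p(u)^{2}\Theta$ is constant). The product rule gives, using $\phi(u)=0$,
\[
\nabla^{2} L^{re}(u) = \nabla^{2} L_{1}(u) \;-\; 2p(u)\bigl[\nabla p(u)\,\nabla \phi(u)^{T} + \nabla \phi(u)\,\nabla p(u)^{T}\bigr].
\]
For any $h \in \ker \nabla S(u) = \ker \nabla T(u)$, one has $h^{T}\nabla \phi(u) = \langle \Theta,\, \nabla S(u)[h]\rangle = 0$, so the cross term vanishes on the critical cone and $h^{T}\nabla^{2}L^{re}(u)h = h^{T}\nabla^{2}L_{1}(u)h > 0$ by Theorem~\ref{reopt}(iii).

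The step I expect to require the most care is verifying that $S(u)=0$ and isolating the algebraic reason why the $p(x)^{2}$ factor does no harm; once $\phi(u)=0$ is in hand the cross terms evaporate cleanly, and the same observation that makes $\nabla T(u)=p(u)^{2}\nabla S(u)$ also forces $\ker\nabla T(u)=\ker\nabla S(u)$, so no independent second-order analysis is needed beyond what Theorem~\ref{reopt} already supplies.
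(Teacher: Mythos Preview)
Your proposal is correct and follows essentially the same route as the paper: both set $\Theta=p(u)^{-2}\Lambda$, use $S(u)=0$ to reduce $\nabla T(u)$ to $p(u)^2\nabla S(u)$, and for (iii) exploit that the extra product-rule terms are annihilated on $\ker\nabla S(u)=\ker\nabla T(u)$. The only cosmetic difference is that you package (iii) via the scalar function $\phi(x)=\langle\Theta,S(x)\rangle$, whereas the paper expands $\nabla^2 T_{ij}$ entrywise; the two computations are equivalent.
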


			%%%%%%%%%%%%%%%%%%%%%%%%%%%%%%%%%%
			\begin{proof}
				Let $\Lambda$ be as in \reff{rkkt} and denote 	
				$\Theta=\frac{1}{p(u)^2}\Lambda$. Note that
				\[
				\baray{cc}
				\nabla_{x_i} T(x)&=\nabla_{x_i} (p(x)^2S(x)) =
				2p(x) \nabla_{x_i}p(x)S(x)+p(x)^2\nabla_{x_i} S(x),
				\earay
				\]
				for  $i=1,\dots,n$. 	
				Since $S(u)=0$, we have
				\be \label{eqgra}
				\nabla_{x_i} T(u)=p(u)^2\nabla_{x_i} S(u),~i=1,\dots,n.
				\ee
				Hence, we get
				\[
				\baray{ll}
				\nabla L^{re}(u)=	\nabla f(u)-\nabla T(u)^*[\Theta]&=	\nabla f(u)-p(u)^2\nabla S(u)^*[\frac{1}{p(u)^2}\Lambda]\\
				&=\nabla f(u)-\nabla S(u)^*[\Lambda]\\
				&=0.\\
				\earay
				\]
				Clearly, $\Theta \succeq 0$ and
				\[
				\langle \Theta,  T(u)\rangle=\langle \frac{1}{p(u)^2}\Lambda,  p(u)^2 S(u)\rangle=0.
				\]
				Hence, the above $\Theta$ satisfies \reff{prkkt}.	
				
				\medskip
				\noindent
				(i)	Suppose $X\in \mathcal{S}^{m-r} $ satisfies $\nabla T(u)^*[X]=0$, i.e.,
				\be \label{3.2.1}
				\left\langle\nabla_{x_i} T(u), X\right\rangle=0,~  i=1,\dots,n.
				\ee
				The relation \reff{eqgra} implies that
				\[
				\left\langle\nabla_{x_i} S(u), X \right\rangle=0,~  i=1,\dots,n.
				\]
				The regularity of $\nabla_{x_i} S(u)$ implies $X=0$
				(see item (i) of Theorem \ref{reopt}). Hence,
				$\operatorname{ker} \nabla T(u)^*= \{0\}$
				and the  linear mapping $\nabla T(u)$ is regular.
				
				\medskip
				\noindent
				(ii) Since $\Lambda \succ 0$, we have
				\[
				\Theta=\frac{1}{p(u)^2}\Lambda\succ 0.
				\]
				
				\medskip
				\noindent
				(iii) For $h\in \mR^n$, we have
				\be
				\baray{rcl}
				\nabla T(u)[h]&=&\sum\limits_{i=1}^n h_i \nabla_{x_i} T(u)
				=p(u)^2 \sum\limits_{i=1}^n h_i \nabla_{x_i} S(u) \\
				&=& p(u)^2 \nabla S(u)[h].\\
				\earay
				\ee
				Since $p(u)\neq 0$, we know $\operatorname{ker} \nabla T(u)
				=\operatorname{ker} \nabla S(u)$.
				For $1\leq i, j\leq m-r$,
				\[
				\nabla^2 T_{ij} = 2S_{ij}\nabla p(\nabla p)^{T}+2pS_{ij}\nabla^2p+2p\cdot(\nabla p(\nabla S_{ij})^{T}
				+ \nabla S_{ij}\nabla p^{T})+p^2\nabla^2 S_{ij}.\\
				\]
				Since $S(u)=0$, it holds that
				\[
				\nabla^2 T_{ij}(u)=2p(u)(\nabla p(u)(\nabla S_{ij}(u))^{T}
				+ \nabla S_{ij}(u)\nabla p(u)^{T})+p(u)^2\nabla^2 S_{ij}(u).
				\]
				For $0\neq h \in \operatorname{ker} \nabla T(u)$, we  have
				\[
				\baray{rcl}
				h^{T} \nabla^2 L^{re}(u) h
				&=& h^{T}\nabla^2 f(u)h-\sum\limits_{i,j= 1}^{m-r} \Theta_{ij}h^{T}\nabla^2 T_{ij}(u)h\\
				&=& h^{T}\nabla^2 f(u)h-4p(u)\sum\limits_{i,j= 1}^{m-r}
				\Theta_{ij}(h^{T}\nabla p(u))(h^{T}\nabla S_{ij}(u))\\
				& & \qquad - p(u)^2\sum\limits_{i,j= 1}^{m-r} \Theta_{ij}h^{T}\nabla^2 S_{ij}(u)h\\
				&=& h^{T}\nabla^2 f(u)h- \sum\limits_{i,j= 1}^{m-r} \Lambda_{ij}h^{T}\nabla^2 S_{ij}(u)h\\
				&=& h^{T} \nabla^2 L_1(u) h>0,\\
				\earay
				\]
				where the third equality is due to $h \in \operatorname{ker} \nabla S(u)$.

			\end{proof}

			Since $T(u)=0$, the canonical unit vectors $e_1,\dots,e_{m-r} \in \mR^{m-r}$
			form a basis of $\operatorname{ker} T(u)$ (i.e., the null space of $T(u)$).
			The linear subspace defined in \reff{tangent} for \reff{re:nsdpt} at $u$ is
			\be
			\mc{N}(u)=\left\{h:=\left(h_1, \ldots, h_n\right) \in \mathbb{R}^n \mid
			\sum_{i=1}^n h_i \cdot \nabla_{x_i}  T(x) =0\right\}.
			\ee
			It implies that $\mc{N}(u) = \operatorname{ker} \nabla T(u)$.
			The regularity of the linear mapping $\nabla T(u)$ is equivalent to
			the nondegeneracy condition for \reff{re:nsdpt} at $u$.
			Thus, the items (i), (ii), (iii) are exactly the NDC, SCC, SOSC
			at the local minimizer $u$ of \reff{re:nsdpt}.

			\subsection{Local SOS representations}
			\label{sec:localsos}
			
			Recall that for a point $u \in \mR^n$ and  a quadratic module $M \subseteq \mR[x]$, the notation $\widehat{M}_u$ denotes the quadratic module
			generated by the image of $M$ in the completion ring of the localization of $\mR[x]$ at $u$ (see Section \ref{sec:qm} for the  notation).
			In this subsection, we show that if the NDC, SCC and SOSC
			hold at a minimizer $u$ of \reff{nsdp}, then $f-f_{\min}\in  \widehat{\qmod{G}}_u $. This gives a local SOS representation.

			Suppose $u$ is a local minimizer of \reff{nsdp} satisfying the NDC and $r = \rank\, G(u)$. In Section \ref{sec:eqpmo}, we know that
			\[
			\sigma(m-r) = \binom{m-r}{2} \, \le \, n.
			\]
			Let us introduce the vector $v=\left(v_1, \ldots, v_n\right)$ consisting of $n$ new variables.
			Let $V \in \mathcal{S}^{m-r} \mathbb{R}[v]$
			be the matrix whose upper triangular part is
			\be \label{def:V}
			\left[\begin{array}{clcl}
				v_1 &v_{2}&\dots &v_{ m-r } \\
				&v_{m-r+1}&\dots & v_{2(m-r)-1} \\
				& &\ddots & \vdots \\
				& & &v_{\sigma(m-r) } \\
			\end{array}\right].
			\ee
			The ring
			of formal power series in $v_1, \ldots, v_n$ is denoted by $\mathbb{R}\left[\left[v \right]\right]$ or $\mathbb{R}\left[\left[v_1, \ldots, v_n \right]\right]$.

			In the following, we first give the relationship between the Taylor expansion of $f$
			and its SOS representation in the completion ring $\widehat{\mR[v]}_u$.
			For convenience, assume $u=0$ and $f_{\min} =0$ at the moment.

			\begin{lem}\label{aule}
				Let  $0\leq r \leq n$ be an integer with $\sigma(m-r)\leq n$. Suppose that $f$ admits an expansion
				\be \label{expand0}
				f=\langle C,  V\rangle+f_2(v)+\sum\limits_{i=3}^{\infty}f_i  \in
				\mathbb{R}\left[\left[v_1, \ldots, v_n \right]\right],
				\ee
				where the symmetric matrix $C \in \mathcal{S}^{m-r}$ is positive definite, the quadratic form $f_2 \in \mathbb{R}[v]$  with
				$
				f_2(0,\ldots,0, v_{\sigma(m-r) +1},\ldots, v_n)
				$
				being positive definite in $v_{\sigma(m-r) +1},\dots, v_n$, and $f_i$ is the degree-$i$ homogeneous part of  $f$.
				Then, $f$ lies in the quadratic module in
				$\mathbb{R}[[v]]$, which is
				generated by the image of $\qmod{V}$, i.e., $f\in \widehat{\qmod{V}}_0$.	
			\end{lem}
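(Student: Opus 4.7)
Set $s := \sigma(m-r)$, collect the ``free'' variables $w := (v_{s+1},\ldots,v_n)$ that do not appear in $V$, and decompose
\[
f \;=\; \langle C,V\rangle + R_1(v,w) + R_2(w), \qquad R_2(w) := f(0,\ldots,0,w),
\]
so that $R_1 := f - \langle C,V\rangle - R_2$ vanishes on $\{v_1=\cdots=v_s=0\}$. Because the degree-one part of $f$ equals $\langle C,V\rangle$ (linear in $v_1,\ldots,v_s$ only), $R_1$ contains no linear terms, so $R_1 = \sum_{k=1}^{s} v_k \rho_k(v,w)$ with each $\rho_k \in \mR[[v,w]]$ vanishing at the origin. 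Similarly $R_2$ has order $\ge 2$ with initial quadratic form $f_2(0,\ldots,0,w)$, which is positive definite in $w$.

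\textbf{The $w$-part $R_2$.} Since $R_2(0)=0$, $\nabla R_2(0)=0$ and the Hessian of $R_2$ at $0$ is positive definite by hypothesis, the formal Morse lemma in $\mR[[w]]$ (obtained by iterated completion of squares, or via Artin approximation) yields $\tilde w_1,\ldots,\tilde w_{n-s} \in \mR[[w]]$ with $R_2 = \tilde w_1^2 + \cdots + \tilde w_{n-s}^2$. Hence $R_2 \in \Sigma[[w]] \subseteq \widehat{\qmod{V}}_0$.

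\textbf{The $\langle C,V\rangle + R_1$ part.} Assemble the coefficients $\rho_k$ into a symmetric matrix $\Xi \in \mathcal{S}^{m-r}\mR[[v,w]]$ by placing $\rho_k$ in the slot dual to $v_k$: if $v_k$ is the diagonal entry $V_{ii}$, set $\Xi_{ii} := \rho_k$; if $v_k$ is the off-diagonal entry $V_{ij}$ with $i<j$, set $\Xi_{ij}=\Xi_{ji} := \tfrac{1}{2}\rho_k$. The template for $V$ in \reff{def:V} together with the equality $s = \sigma(m-r)$ makes this assignment a linear bijection, and $\langle\Xi,V\rangle = R_1$ follows by direct computation. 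Since every $\rho_k$ vanishes at the origin, $\Xi(0) = 0$, so the symmetric matrix $C+\Xi$ over $\mR[[v,w]]$ has positive definite constant term $C$. Writing $C+\Xi = C^{1/2}(I+N)C^{1/2}$ with $N := C^{-1/2}\Xi C^{-1/2}$, the matrix $N$ is symmetric and has no constant term, so the binomial series $(I+N)^{1/2} := \sum_{k\ge 0}\binom{1/2}{k}N^k$ converges in $\mR[[v,w]]^{(m-r)\times(m-r)}$ and is symmetric; setting $L := C^{1/2}(I+N)^{1/2}$ gives $LL^T = C+\Xi$. With $\ell_1,\ldots,\ell_{m-r}$ denoting the columns of $L$, the cyclic trace identity yields
\[
\langle C,V\rangle + R_1 \;=\; \langle C+\Xi,V\rangle \;=\; \mathrm{tr}(L^T V L) \;=\; \sum_{j=1}^{m-r} \ell_j^T V \ell_j \;\in\; \widehat{\qmod{V}}_0.
\]
Adding the representation of $R_2$ then gives $f \in \widehat{\qmod{V}}_0$, as required.

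\textbf{Main obstacle.} The individual algebraic tools---the formal Morse lemma and the formal symmetric square root of a matrix whose constant term is $\succ 0$---are both classical. The real content lies in identifying the right decomposition of $f$ so that the two hypotheses split cleanly: positive definiteness of $C$ supplies the invertible factor $C^{1/2}$ needed to take a formal square root of $C+\Xi$, while positive definiteness of $f_2$ in the free directions $w$ supplies exactly the nondegeneracy needed to put $R_2$ into SOS form. A subtle bookkeeping point is the encoding of $R_1$ as $\langle\Xi,V\rangle$ with $\Xi$ symmetric, which relies essentially on $V$ being the \emph{generic} symmetric $(m-r)\times(m-r)$ matrix in exactly $s = \sigma(m-r)$ independent variables.
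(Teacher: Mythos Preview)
Your proof is correct and uses the same overall decomposition as the paper: both split $f$ into a ``pure $w$-part'' $R_2$ (the paper's $h$), shown to be SOS in $\mathbb{R}[[v]]$ via a formal square root, and a ``$V$-part'' $\langle C+\Xi,V\rangle$ (the paper's $\langle C+H,V\rangle$) with $\Xi$ having vanishing constant term.

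The only substantive difference is how $\langle C+\Xi,V\rangle$ is placed in $\widehat{\qmod{V}}_0$. You take a direct matrix square root $C+\Xi = LL^T$ via the binomial series for $(I+N)^{1/2}$ and obtain $\langle C+\Xi,V\rangle = \sum_j \ell_j^T V \ell_j$ in one stroke. The paper instead normalizes to $C=I_{m-r}$, decomposes $I+H$ into a sum of $2\times 2$-supported blocks $H^{(ij)}$, and writes each $\langle H^{(ij)},V\rangle$ explicitly as SOS power-series coefficients times the diagonal entries $V_{ii},V_{jj}$ plus a single term $p^TV^{(ij)}p$. Your route is shorter and uniform in $m-r$; the paper's is more explicit and produces a concrete finite list of scalar generators from $\qmod{V}$ (diagonals and $2\times 2$ principal forms), which feeds directly into the later construction of the polynomial tuple $\Phi$ in the proof of Theorem~\ref{mianthm1}. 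Both arguments end with terms of the form $\ell^TV\ell$ for power-series vectors $\ell$, so they rely on the same reading of $\widehat{\qmod{V}}_0$.
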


			\begin{proof}
				We can rewrite $f$ as
				\[
				f= \langle C,  V\rangle+\langle H, V  \rangle +h =\langle C+H,  V\rangle +h,
				\]
				where $H=(h_{ij})$ is a symmetric matrix whose entries have no constant terms,
				and $h$ equals the sum of $ f_2\left(0, \ldots, 0, v_{\sigma(m-r) +1}, \ldots, v_n\right)$
				and terms of degree $3$ or higher in
				$v_{\sigma(m-r) +1}, \ldots, v_n$.
				Since $f_2(0,\ldots,0, v_{\sigma(m-r) +1},\ldots, v_n)$	is positive definite in  $v_{\sigma(m-r) +1},\dots, v_n$, there exist a symmetric matrix $A\succ 0$ and a  symmetric polynomial matrix $W(x)$   whose entries have no constant terms
				such that
				\[
				h = (v^{\prime})^T(A+W)v^{\prime},
				\]
				where $v^{\prime}=(v_{\sigma(m-r) +1}, \ldots, v_n)$.
				Consider the formal power series
				\[
				\omega(t) \, \coloneqq \,
				1+\frac{1}{2}t-\frac{1}{8}t^2+\cdots,
				\]
				which is the Taylor expansion of $\sqrt{1+t}$. Then, every entry of the matrix $A^{-\frac{1}{2}}WA^{-\frac{1}{2}}$ has no constant terms  and  we have that
				\[
				\baray{ll}
				h=(v^{\prime})^T(A+W)v^{\prime}&=(v^{\prime})^TA^{\frac{1}{2}}(I+A^{-\frac{1}{2}}WA^{-\frac{1}{2}})A^{\frac{1}{2}}v^{\prime}\\
				&=(\omega(A^{-\frac{1}{2}}WA^{-\frac{1}{2}})A^{\frac{1}{2}}v^{\prime})^T\omega(A^{-\frac{1}{2}}WA^{-\frac{1}{2}})A^{\frac{1}{2}}v^{\prime}.
				\earay
				\]
				It implies  that
				$h$ is SOS in $\mathbb{R}[[v]]$.
				This kind of representations are
				similarly used in \cite{mar06,Sch03}.

				%First, we show that $h$ is an SOS in $\re[[x]]$. Note that $f_2(0,\ldots,0, x_{\sigma(m-r) +1},\ldots, x_n)$ is positive definite.  Up to an invertible linear transformation, we assume that
				%\[
				%f_2\left(0, \ldots, 0, x_{\sigma(m-r) +1}, \ldots, x_n\right)= x_{\sigma(m-r) +1}^2+\cdots+x_n^2.
				%\]
				%Then, $h$ can be rewritten as
				%\[
				%\baray{ll}
				%h&=x_{\sigma(m-r) +1}^2+\cdots+x_n^2+x_{\sigma(m-r) +1}t_{\sigma(m-r) +1}+\cdots+x_{n}t_{n}\\
				%&=x_{\sigma(m-r) +1}^2(1+t_{\sigma(m-r)+1})+\cdots+x_{n}^2(1+t_{n}),\\
				%\earay
				%\]
				%where $t_{\sigma(m-r)+1},\dots, t_{n}$ are polynomials in $x_{\sigma(m-r) +1}, \ldots, x_n$.
				% Then, we have that
				%\[
				%h=x_{\sigma(m-r) +1}^2\cdot \ell(t_{\sigma(m-r)+1})^2+\cdots+x_{n}^2\cdot \ell(t_{n})^2,
				%\]
				%which implies that  $h$ is an SOS in $\re[[x]]$.
				%Clearly, $h$ is SOS in $\re[[x]]$
				%(cf. \cite[Lemma 2.1]{mar06}).
				%\textcolor{blue}{((The proof is not short, is it necessary?)}

				Case {\bf  I:} If $m-r=1$,  $C+H$ is a scalar polynomial. Then,
				we have 
				\[
				C+H=C(1+H/C)=C\omega(H/C)^2.
				\]
				Thus, $C+H$ is a sum of squares in $\re[[v]]$ and  $f\in \widehat{\qmod{V}}_0$.

				Case {\bf  II:}
				Consider the case $m-r>1$.

				Without loss of generality, we can assume  $C=I_{m-r}$.
				Otherwise, we can replace $V$ by $C^{\frac{1}{2}}VC^{\frac{1}{2}}$ and then
				the  multiplier matrix $C$ becomes $I_{m-r}$.
				One can write $ I_{m-r}+H$ as
				\[
				I_{m-r}+H  \, =  \, \sum\limits_{1\leq i<j\leq m-r} H^{(ij)},
				\]
				where each matrix $H^{(ij)}$ is given as
				\[
				H^{(ij)}_{ks}=\left\{  \baray{ll}
				\frac{1+h_{ks}}{m-r-1},&\text{if~} (k,s)=(i,i),~(j,j),\\
				h_{ks},&\text{if~} (k,s)=(i,j),~(j,i),\\
				0,&\text{otherwise}.\\
				\earay  \right.
				\]
				Denote the  polynomial matrix $V^{(ij)} \in  \mathbb{R}[x]^{2 \times 2}$:
				\[
				V^{(ij)}=\left[  \baray{ll}
				V_{ii}&V_{ij}\\
				V_{ij}&V_{jj}\\
				\earay  \right]
				\]
				Clearly, $V^{(ij)} \in \qmod{V}^{2}$, so
				\[
				\langle H^{(ij)},  V\rangle \, =  \, \frac{1+2h_{ii}}{2(m-r-1)} \cdot V_{ii}+(\frac{1+h_{jj}}{m-r-1}-2(m-r-1)h_{ij}^2)V_{jj}+p^TV^{(ij)}p,
				\]
				where
				$
				p = \sqrt{2(m-r-1)} \bbm  \frac{1}{2(m-r-1)} \,\,\,\,   h_{ij}  \ebm^T.
				$
				Similarly, one can show that
				\[
				\frac{1+2h_{ii}}{2(m-r-1)}, \quad
				\frac{1+h_{jj}}{m-r-1}-2(m-r-1)h_{ij}^2
				\]
				are SOS in $\re[[x]]$.
				Thus, each $\langle H^{(ij)},  V\rangle$ lies in the quadratic module in
				$\re[[x]]$ generated by $\qmod{V}$.
				This implies $f\in \widehat{\qmod{V}}_0$.  Since $T(x) \in \mathcal{S}^{m-r}\qmod{G}$,  we know
				$f$ lies in the quadratic module in $\re[[x]]$
				generated by $\qmod{G}$. Since the completion of $\re[x]_0$
				is exactly the formal power series ring $\re[[x]]$,
				we finally get $f \in \widehat{\qmod{G}}_0$.

			\end{proof}

			Suppose $u$ is a local minimizer of \reff{nsdp} and $\rank \, G(u) = r$.
			Let $T(x)$ be the  polynomial matrix as in  \reff{re:nsdpt}.
			Recall that the gradient operator $\nabla T(u)$
			is said to be regular if $\operatorname{ker} \nabla T(u)^*= \{0\} \subset \mathbb{S}^{m-r}$,
			i.e., $\nabla T(u)^*$ is injective.

			\begin{lem}\label{linear}
				Suppose $u$ is a local minimizer of \reff{nsdp} and $\rank\,G(u)=r$.
				Let $T(x)$ be the  polynomial matrix as in  \reff{re:nsdpt}.
				If the linear mapping $\nabla T(u)$ is regular, then the gradient vectors
				\[
				\nabla T_{ij}(u) \quad  (1\leq i\leq j\leq m-r)
				\]
				are linearly independent.
			\end{lem}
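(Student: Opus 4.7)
The plan is to argue by contrapositive, translating any linear dependence among the vectors $\nabla T_{ij}(u)$ into a nonzero element of $\ker \nabla T(u)^*$, which regularity forbids. Suppose such a dependence exists, so there are scalars $\{c_{ij}\}_{1\le i\le j\le m-r}$, not all zero, with
\[
\sum_{1\le i\le j\le m-r} c_{ij}\,\nabla T_{ij}(u) \,=\, 0 \quad\text{in } \mathbb{R}^n.
\]

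The key construction is to package the $c_{ij}$ into a nonzero symmetric matrix $X\in\mathcal{S}^{m-r}$ that witnesses $\nabla T(u)^*[X]=0$. I would set $X_{ii}=c_{ii}$ on the diagonal and $X_{ij}=X_{ji}=c_{ij}/2$ for $i<j$. The factor $1/2$ is precisely what makes the Frobenius inner product behave correctly: since $\nabla_{x_k}T(u)$ is symmetric, one obtains
\[
\langle \nabla_{x_k}T(u),\,X\rangle
\,=\, \sum_{i} c_{ii}\,\partial_{x_k} T_{ii}(u)
\,+\, \sum_{i<j} c_{ij}\,\partial_{x_k} T_{ij}(u)
\,=\, \sum_{i\le j} c_{ij}\,\partial_{x_k} T_{ij}(u),
\]
which is the $k$-th entry of the vector $\sum_{i\le j} c_{ij}\nabla T_{ij}(u)$, and hence vanishes for every $k=1,\dots,n$ by the assumed dependence.

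Consequently $\nabla T(u)^*[X]=0$, so $X\in\ker\nabla T(u)^*$. The regularity of $\nabla T(u)$ means $\ker\nabla T(u)^*=\{0\}$, forcing $X=0$, hence all $c_{ij}=0$, contradicting the choice of coefficients. This establishes the claimed linear independence.

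I do not anticipate a real obstacle: the lemma is essentially the standard equivalence between surjectivity of $\nabla T(u)$ onto $\mathcal{S}^{m-r}$ (equivalently, injectivity of the adjoint) and the spanning property of the gradients, once one expresses $X$ in the natural symmetric basis $\{E_{ii}\}\cup\{E_{ij}+E_{ji}:i<j\}$ of $\mathcal{S}^{m-r}$. The only care required is bookkeeping with the off-diagonal factor of $2$ built into the Frobenius inner product on symmetric matrices.
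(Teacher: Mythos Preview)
Your argument is correct and is essentially the contrapositive of the paper's own proof: the paper observes that the matrices $\tfrac{1}{2}(e_ie_j^T+e_je_i^T)$ for $1\le i\le j\le m-r$ are linearly independent in $\mathcal{S}^{m-r}$ and that the injective map $\nabla T(u)^*$ sends them to $\nabla T_{ij}(u)$, whence linear independence follows. Your matrix $X$ is precisely the linear combination $\sum_{i\le j} c_{ij}\cdot\tfrac{1}{2}(e_ie_j^T+e_je_i^T)$, so the two proofs differ only in whether the implication is phrased directly or contrapositively.
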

			\begin{proof}
				For the canonical unit vectors $e_1,\dots,e_{m-r}$ in $\mR^{m-r}$, the matrices
				\[
				\frac{1}{2}(e_ie_j^{T}+e_je_i^{T})  \quad  (1\leq i\leq j\leq m-r)
				\]
				are linearly independent in $\mathbb{S}^{m-r}$.	 		
				Since $\nabla T(u)$ is regular, the vectors
				\[
				\nabla T(u)^*[\frac{1}{2}(e_ie_j^{T}+e_je_i^{T})] \, = \,
				\nabla T_{ij}(u) \quad (1\leq i\leq j\leq m-r)
				\]
				are also linearly independent.

			\end{proof}

			In the following, we prove that if the NDC, SCC and SOSC hold at
			a local minimizer $u$ of \reff{nsdp}, then $f-f_{\min}  \in \widehat{\qmod{G}}_u $.
			
			\begin{thm} \label{mat:bhc}
				Suppose $u$ is a local minimizer of \reff{nsdp},
				\rev{and
					the nondegeneracy condition \reff{CQ}, strict complementarity condition \reff{SCC},
					second order sufficient condition \reff{sosc}  hold at $u$.} 
				Then, $f-f_{\min}$ belongs to the quadratic module in $\widehat{\mathbb{R}[x]}_{u}$
				generated by the image of $\qmod{G}$, i.e.,
				\[
				f-f_{\min}\in \widehat{\qmod{G}}_u .
				\]
				That is, there exist finitely many polynomials
				$g_u^{(1)},g_u^{(2)},\dots,g_u^{(t_u)} \in \qmod{G}$
				and $\sigma_u^{(1)},\dots,\sigma_u^{(t_u)}$
				which are SOS in  $\widehat{\mathbb{R}[x]}_{u}$, such that
				\[
				f-f_{\min}=\sigma_u^{(1)}g_u^{(1)}+\cdots+\sigma_u^{(t_u)}g_u^{(t_u)}.
				\]
			\end{thm}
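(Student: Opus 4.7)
The plan is to reduce Theorem \ref{mat:bhc} to an application of Lemma \ref{aule} via a smooth change of coordinates centered at $u$. Without loss of generality, assume $u = 0$ and $f_{\min} = 0$. Using the locally equivalent reformulation \reff{re:nsdpt}, we have a polynomial matrix $T \in \mathcal{S}^{m-r}\qmod{G}$ with $T(u) = 0$, and Proposition \ref{propt} supplies a multiplier $\Theta \succ 0$ satisfying the KKT conditions together with the regularity of $\nabla T(u)$ and positive definiteness of $\nabla^2 L^{re}(u)$ on $\ker \nabla T(u)$.

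First I would construct the change of coordinates. By Lemma \ref{linear}, the $\sigma(m-r)$ gradients $\nabla T_{ij}(u)$ for $1 \leq i \leq j \leq m-r$ are linearly independent, so they can be extended to a full system of $n$ polynomials with linearly independent gradients at $u$ by choosing the remaining $n - \sigma(m-r)$ of them to form a basis dual to $\ker \nabla T(u)$. By the inverse function theorem this yields analytic local coordinates $v = (v_1, \ldots, v_n)$ near $0$, ordered so that $v_k$ agrees with the corresponding entry $T_{ij}(x)$ for $k \leq \sigma(m-r)$ (matching the layout of \reff{def:V}); under this coordinate change, the matrix $V$ of \reff{def:V} pulls back exactly to $T(x)$, and the coordinate isomorphism identifies $\widehat{\qmod{V}}_0 \subseteq \re[[v]]$ with $\widehat{\qmod{T}}_u \subseteq \widehat{\re[x]}_u$.

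Next I would compute the Taylor expansion of $f$ in the new variables. Since $\nabla L^{re}(u) = 0$, the linear part of $f = L^{re} + \langle \Theta, T\rangle$ in $v$-coordinates is precisely $\langle \Theta, V\rangle$, giving the required positive definite multiplier matrix $C = \Theta$ in \reff{expand0}. For the quadratic part, the Hessian of $L^{re}$ transforms by Jacobian conjugation (the extra contributions from the second derivatives of the coordinate map vanish because $\nabla L^{re}(u) = 0$), and Proposition \ref{propt}(iii) translates directly into the positive definiteness of $f_2(0,\ldots,0, v_{\sigma(m-r)+1},\ldots,v_n)$ in the final $n - \sigma(m-r)$ coordinates. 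Thus the Taylor expansion of $f$ in $v$ matches the hypothesis of Lemma \ref{aule} verbatim, so $f \in \widehat{\qmod{V}}_0$. Transporting back and observing that writing $T = Q + \sum_j P_j^T G P_j$ (with $Q \in \mathcal{S}^{m-r}\Sigma[x]$) shows every generator $w^T T w$ of $\qmod{T}$ lies in $\qmod{G}$, we obtain $f \in \widehat{\qmod{T}}_u \subseteq \widehat{\qmod{G}}_u$.

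The main obstacle I anticipate is the bookkeeping in the second step: one has to verify carefully that under the coordinate change, the linear part of $f$ is exactly $\langle \Theta, V \rangle$ (with no stray higher-order terms spoiling the multiplier being constant) and that the Hessian of $L^{re}$ restricted to the kernel coordinates remains positive definite after the nonlinear coordinate transformation. The $H$-term in the original SOSC \reff{sosc} has already been removed by passing to $T$ via Proposition \ref{propt}, which is precisely what makes the form \reff{expand0} attainable; verifying that no analogous obstruction reappears through the coordinate change is the delicate point that bridges the nonlinear semidefinite optimality conditions to the clean algebraic input required by Lemma \ref{aule}.
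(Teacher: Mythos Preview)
Your proposal is correct and follows essentially the same route as the paper: reduce to the reformulation $T\in\mathcal{S}^{m-r}\qmod{G}$ via Proposition~\ref{propt}, use Lemma~\ref{linear} and the inverse function theorem to build local coordinates $v$ in which the first $\sigma(m-r)$ coordinates are the entries $T_{ij}(x)$, verify from $\nabla L^{re}(u)=0$ and the Jacobian being the identity at $u$ that the Taylor expansion of $f$ in $v$ has linear part $\langle\Theta,V\rangle$ and quadratic part positive definite in the last $n-\sigma(m-r)$ variables, then apply Lemma~\ref{aule} and transport back via $\qmod{T}\subseteq\qmod{G}$. The bookkeeping you flag as the main obstacle is handled in the paper exactly as you anticipate, by an explicit chain-rule computation showing the second-order cross terms vanish because $\nabla L^{re}(u)=0$.
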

			
			\begin{proof}
				By Proposition~\ref{propt},  there exist $T(x) \in \mathcal{S}^{m-r}\mathrm{QM}[G]$ and a neighborhood $U$
				of $u$
				such that $T(u)=0$,   \reff{nsdp} is equivalent to
				\be  \nonumber
				\left\{ \baray{rl}
				\min & f(x)  \\
				\st &  	T(x)\succeq 0, ~x\in U,\\
				\earay \right.
				\ee
				and there exists $ \Theta \in \mathcal{S}_{+}^{m-r}$ such that
				\[
				\nabla 	L^{re}(u)=0 ,·~\langle \Theta,  T(u)\rangle=0,
				\]
				where
				\[
				L^{re}(x)  \coloneqq  f(x)-\langle \Theta,  T(x)\rangle .
				\]
				Furthermore, the linear mapping $\nabla T(u)$ is regular,
				the Lagrange multiplier matrix $\Theta \succ 0$, and the following holds
				\be \label{sosclocal}
				h^{T} \nabla^2 L^{re}(u) h>0 \,\quad \text{for all}
				\quad  0 \ne h \in \operatorname{ker} \nabla T(u).
				\ee
				%Note that  the matrices  $\frac{e_ie_j^{T}+e_je_i^{T}}{2} \in \mathbb{S}^{m-r}$ ($1\leq i\leq j\leq n) $ is linearly independent.		
				%The regularity of $\nabla T(u)$ implies that
				By Lemma \ref{linear}, the gradient vectors
				\[
				\nabla T_{ij}(u) \quad (1\leq i\leq j\leq m-r)
				\]
				are  linearly independent. Up to an invertible linear coordinate transformation
				and shifting by a constant, we can generally assume that $f_{\min}=0$, $u=0$ and
				\[
				\bbm \nabla T_{11}(0) & \nabla T_{12}(0)
				&\cdots  &  \nabla T_{m-r,m-r}(0) \ebm  =
				\left[\begin{array}{l}
					I_{\sigma(m-r)} \\
					0  \\
				\end{array}\right].
				\]
				Hence, we get
				\[
				\operatorname{ker} \nabla T(0)=\left\{h=\left(h_1, \ldots, h_n\right) \in \mathbb{R}^n \mid h_1=\cdots=h_{\sigma(m-r)}=0\right\}.
				\]
				The SOSC at $u$ is equivalent to that the sub-Hessian
				\[
				\left(\frac{\partial^2 L^{re}(0)}{\partial x_i \partial x_j}
				\right)_{\sigma(m-r)+1 \leq i, j \leq n}
				\]
				is positive definite.
				Define the polynomial function
				\[
				\varphi(x)  \coloneqq  \left(\varphi_I(x), \varphi_{I I}(x)
				\right): \mathbb{R}^n \rightarrow \mathbb{R}^n,
				\]
				where
				\[
				\varphi_I(x)=\left[\begin{array}{l}
					T_{11}(x) \\
					T_{12}(x)\\
					\vdots \\
					T_{m-r,m-r}(x)
				\end{array}\right], \quad \varphi_{I I}(x)=\left[\begin{array}{l}
					x_{\sigma(m-r)+1} \\
					x_{\sigma(m-r)+2} \\
					\vdots \\
					x_{n}
				\end{array}\right].
				\]
				Clearly, $\varphi(0)=0$, and the Jacobian of $\varphi$ at 0 is the identity matrix $I_n$.
				By the Implicit Function Theorem, there exists a neighborhood $U^{\prime}\subseteq U$
				of $0$ such that the equation $v=\varphi(x)$
				defines a smooth function $x(v)=\varphi^{-1}(v)$.
				Write that
				\[
				\varphi^{-1}(v)  = (\varphi_1(v), \ldots, \varphi_n(v)).
				\]
				%Thus, $u=\left(u_1, \ldots, u_n\right)$ can serve as a coordinate system for $\mathbb{R}^n$ around 0 and $u=\varphi(x)$.
				In the $v$-coordinate system, $S(G) \cap U^{\prime}$
				can be equivalently described as $V\succeq 0$,
				where $V$ is defined as in \reff{def:V}.
				Define new functions
				\[
				F(v)  \, \coloneqq \, f\left(\varphi^{-1}(v)\right), \quad
				\widehat{L}^{re}(v) \, \coloneqq \,  L^{re}\left(\varphi^{-1}(v)\right)
				= F(v)-\langle \Theta,  V\rangle.
				\]
				Note that $\nabla_x L^{re}(0)=0$ implies $\nabla_v \widehat{L}^{re}(0)=0$,
				so it holds that
				\be \label{rev1}
				\begin{aligned}
					& \frac{\partial F(0)}{\partial V_{ij}}=2\Theta_{ij},  ~1\leq i\leq j \leq m-r,\\
					& \frac{\partial F(0)}{\partial v_k}=0,~k=\sigma(m-r)+1, \ldots, n.
				\end{aligned}
				\ee
				Expand $F(v)$ locally around 0 as
				$$
				F(v)=f_0(v)+f_1(v)+f_2(v)+f_3(v)+\cdots,
				$$
				where each $f_i$ is the degree-$i$  homogeneous part of  $f$  in $v$.
				The equation \reff{rev1} implies that
				$$
				f_0(v)=f(u)\geq 0,~f_1(v)=\langle \Theta,  V\rangle.
				$$
				For $v \in U^{\prime}$, it holds that
				\begin{eqnarray*}
					F(0, \ldots, 0, v_{\sigma(m-r)+1},\dots,v_n)
					& = & \widehat{L}^{re}(0, \ldots, 0, v_{\sigma(m-r)+1}, \ldots, v_{n}) \\
					& = & L^{re}(\varphi^{-1}(0, \ldots, 0, v_{\sigma(m-r)+1}, \ldots, v_{n})) .
				\end{eqnarray*}
				For all $1\leq i,j\leq n$, we have
				\begin{eqnarray*}
					\frac{\partial^2 \widehat{L}^{re}(v)}{\partial v_i \partial v_j}
					&=&\sum_{1 \leq k, s \leq n} \frac{\partial^2 L^{re}(x(v))}{\partial x_k \partial x_s} \frac{\partial \varphi^{-1}_k(v)}{\partial v_i} \frac{\partial \varphi^{-1}_s(v)}{\partial v_j}   \\
					& &  \qquad \quad  +\sum_{1 \leq k \leq n} \frac{\partial L^{re}(x(v))}{\partial x_k} \frac{\partial^2 \varphi^{-1}_k(v)}{\partial v_i \partial v_j} .
				\end{eqnarray*}
				Evaluating the above at $v=0$, we get $x(v)=0$ and
				$$
				\frac{\partial^2 \widehat{L}^{re}(0)}{\partial v_i \partial v_j}=\sum_{1 \leq k, s \leq n} \frac{\partial^2 L^{re}(0)}{\partial x_k \partial x_s} \frac{\partial \varphi^{-1}_k(0)}{\partial v_i} \frac{\partial \varphi^{-1}_s(0)}{\partial v_j}.
				$$
				Since
				\[
				\operatorname{Jac}(\varphi)\Big\vert_{x=0} =
				\operatorname{Jac}\left(\varphi^{-1}\right)\Big\vert_{v=0} = I_n.
				\]
				Therefore, for all $\sigma(m-r)+1 \leq i, j \leq n$, we have
				\[
				\left.\frac{\partial^2 f_2}{\partial v_i \partial v_j}\right|_{v=0}=
				\left.\frac{\partial^2 \widehat{L}^{re}}{\partial v_i \partial v_j}\right|_{v=0}=\left.\frac{\partial^2 L^{re}}{\partial x_i \partial x_j}\right|_{x=0} .
				\]
				Note that the sub-Hessian
				$$
				\left(\frac{\partial^2 L^{re}(0)}{\partial x_i \partial x_j}\right)_{\sigma(m-r)+1 \leq i, j \leq n}
				$$
				is positive definite. Thus, the quadratic form $f_2$ is positive definite in
				$x_{\sigma(m-r)+1}$, $\ldots$, $x_{n}$. By Lemma \ref{aule}, we know that
				$F(v)$ belongs to the quadratic module in $\re[[x]]$ generated by
				$\qmod{V}$. Thus,  $f-f_{\min}$ lies in the quadratic module in $\re[[x]]$
				generated by $\qmod{T}$. Since $T(x) \in \mathcal{S}^{m-r}\mathrm{QM}[G]$,  we know
				$f-f_{\min}$ lies in the quadratic module in $\re[[x]]$
				generated by $\qmod{G}$. Since the completion of $\re[x]_0$
				is exactly the formal power series ring $\re[[x]]$,
				we finally get $f-f_{\min} \in \widehat{\qmod{G}}_0$.

			\end{proof}

			\subsection{ Proof of  Theorem \ref{mianthm1} (also Theorem~\ref{mianthm1:local})}
			\label{proofthm1}
			First, we prove that if the optimality conditions
			hold at a minimizer $u$ of \reff{nsdp}, then   there are a neighborhood $U$
			of $u$  and an element $a_u \in \qmod{G}$
			such that
			\[
			\{x\in \mR^n: a_u(x) \geq 0\} \cap V_{\mR}(f-f_{\min}) \cap U \subset S(G).
			\]
			
			\begin{thm}\label{exist:a}
				Suppose $u$ is a local minimizer of \reff{nsdp},
				\rev{and
					the nondegeneracy condition \reff{CQ}, strict complementarity condition \reff{SCC},
					second order sufficient condition \reff{sosc}  hold at $u$.}  Then, there are a neighborhood $U$
				of $u$  and an element $a_u \in \qmod{G}$
				such that
				\[
				\{x: a_u(x) \geq 0\} \cap V_{\mR}(f-f_{\min}) \cap U \subset S(G).
				\]
			\end{thm}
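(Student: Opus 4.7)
The plan is to work in the locally equivalent reformulation of Section~\ref{sec:eqpmo}. By Proposition~\ref{propt}, in a neighborhood $U'$ of $u$ the constraint $G(x)\succeq 0$ is equivalent to $T(x)\succeq 0$, where $T\in\mathcal{S}^{m-r}\qmod{G}$ satisfies $T(u)=0$, $\nabla T(u)$ is regular, and there is a multiplier $\Theta\succ 0$ fulfilling the KKT system \reff{prkkt} and the SOSC on $\operatorname{ker}\nabla T(u)$. A basic observation is that for any polynomial vector $\xi\in\mR[x]^{m-r}$, $\xi(x)^TT(x)\xi(x)\in\qmod{G}$: writing $T=Q+\sum_jP_j^TGP_j$ with $Q\in\mathcal{S}^{m-r}\Sigma[x]$ and $P_j\in\mR[x]^{m\times(m-r)}$, one gets $\xi^TT\xi=\xi^TQ\xi+\sum_j(P_j\xi)^TG(P_j\xi)\in\Sigma[x]+\qmod{G}\subseteq\qmod{G}$. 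Hence any finite sum $\sum_i\xi_i(x)^TT(x)\xi_i(x)$ plus any SOS polynomial $\sigma\in\Sigma[x]$ lies in $\qmod{G}$. My strategy is to construct $a_u=\sum_i\xi_i(x)^TT(x)\xi_i(x)+\sigma(x)$ of this shape such that $a_u\leq 0$ on $V_{\mR}(f-f_{\min})\cap U$ with equality only at $u$.

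In the coordinates $v=\varphi(x)$ constructed in the proof of Theorem~\ref{mat:bhc}, $T(\varphi^{-1}(v))=V(v')$ and
\[
F(v)-f_{\min}\;=\;\langle\Theta,V(v')\rangle+f_2(v)+O(\|v\|^3),
\]
with $f_2(0,v'')$ positive definite in $v''$. Since $\Theta\succ 0$, $\nabla F(0)\neq 0$ and $V_{\mR}(F-f_{\min})$ is locally a smooth analytic hypersurface through $0$, on which $\langle\Theta,V(v')\rangle=-f_2(v)+O(\|v\|^3)$. The $(x-u)$-linear part of $\sum_i\xi_i^T T\xi_i$ equals $\langle C,\nabla T(u)[x-u]\rangle$ with $C=\sum_i\xi_i(u)\xi_i(u)^T$. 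I will choose the constant vectors $\xi_i(u)$ so that $\sum_i\xi_i(u)\xi_i(u)^T=c\Theta$ for some $c>0$, which is solvable since $\Theta\succ 0$ (spectrally decompose $\Theta=\sum_k\lambda_ke_ke_k^T$ and set $\xi_i(u)=\sqrt{c\lambda_k}\,e_k$). In $v$-coordinates this forces the linear part of $a_u(\varphi^{-1}(v))$ to be exactly $c\langle\Theta,V(v')\rangle$. The $x$-linear parts of the $\xi_i$'s, together with the SOS summand $\sigma$ (whose Hessian at $u$ may be any prescribed PSD matrix), then provide the freedom to arrange that the quadratic part of $a_u(\varphi^{-1}(v))$ equals $c f_2(v)-\epsilon\|v\|^2$ modulo $O(\|v\|^3)$, for some small $\epsilon>0$.

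Substituting $\langle\Theta,V(v')\rangle=-f_2(v)+O(\|v\|^3)$ into $a_u(\varphi^{-1}(v))$ along $V_{\mR}(f-f_{\min})\cap U$ then yields
\[
a_u(\varphi^{-1}(v))\;=\;-c f_2(v)+c f_2(v)-\epsilon\|v\|^2+O(\|v\|^3)\;=\;-\epsilon\|v\|^2+O(\|v\|^3),
\]
which is strictly negative for $0\neq v$ in a sufficiently small punctured neighborhood of $0$. Hence $\{a_u\ge 0\}\cap V_{\mR}(f-f_{\min})\cap U=\{u\}\subseteq S(G)$ for a small enough $U$, which is the desired containment. The main obstacle is establishing the claim about the quadratic part: after fixing $\sum_i\xi_i(u)\xi_i(u)^T=c\Theta$, one must show there is sufficient linear-algebraic room in the cross-terms $2\xi_i(u)^T(\nabla T(u)[x-u])\,L_i(x-u)$ (coming from $x$-linear perturbations $L_i$ of $\xi_i$) together with $\nabla^2\sigma(u)\succeq 0$ to realize the prescribed quadratic form $c f_2-\epsilon\|v\|^2$ in $v$. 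This flexibility should follow from the regularity of $\nabla T(u)$ (which yields $\sigma(m-r)\le n$ and hence enough independent $x$-directions), the positive definiteness of $\Theta$, and the size of the SOS cone; executing this linear-algebraic verification carefully is the most technical part of the proof.
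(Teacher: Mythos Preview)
Your strategy coincides with the paper's: build $a_u\in\qmod{G}$ out of terms $\xi_i(x)^T T(x)\xi_i(x)$ coming from the local reformulation $T\in\mathcal{S}^{m-r}\qmod{G}$, and arrange that $c(f-f_{\min})-a_u$ has positive definite quadratic part at $u$, so that on $\{f=f_{\min}\}$ near $u$ one gets $a_u<0$ except at $u$ itself. The difference is that the paper does not argue abstractly for the existence of suitable $\xi_i$ and $\sigma$; it simply writes down an explicit choice and computes. After normalizing so that $u=0$, $f_{\min}=0$, $\Theta=I_{m-r}$, and $T=X+T'$ with $T'$ of order $\ge 2$ (here $X$ is the symmetric matrix of the first $\sigma(m-r)$ coordinates), one has the expansion $f=\langle I,T\rangle+f_2(y)+\langle H,T\rangle+s$ with $H$ linear and $s$ of order $\ge 3$. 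The paper then takes
\[
a_u \;=\; \operatorname{Trace}\bigl((I_{m-r}+(H-T)/2)^2\,T\bigr)\;=\;\sum_{k}\,\xi_k^T\,T\,\xi_k,
\]
where $\xi_k$ is the $k$th column of $I_{m-r}+(H-T)/2$, so $a_u\in\qmod{G}$ for exactly the reason you gave. A direct expansion yields
\[
f-a_u \;=\; f_2(y)+\langle T,T\rangle+s-\tfrac14\operatorname{Trace}\bigl((H-T)^2T\bigr)\;=\;f_2(y)+\langle X,X\rangle+s',
\]
with $s'$ of order $\ge 3$, and $f_2(y)+\langle X,X\rangle$ is positive definite in all of $x$. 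Hence $\{a_u\ge 0\}\cap V_{\mR}(f-f_{\min})\cap U=\{u\}$.

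So your ``most technical part'' --- matching the quadratic form by juggling the linear parts of the $\xi_i$ and a PSD Hessian from $\sigma$ --- is not needed: the single explicit choice above already makes the quadratic part of $f-a_u$ positive definite, with no SOS correction $\sigma$ and with $c=1$. Your abstract route could be pushed through (the paper's formula is a witness that the required quadratic form is attainable within your ansatz), but it is strictly more work than the one-line definition of $a_u$ followed by a two-line expansion.
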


			\begin{proof}	
				Up to  shifting by a constant,
				we can assume that  $f_{\min}=0$.
				Suppose $u$ is a minimizer of \reff{nsdp} and $\rank \, G(u) = r$.
				By Proposition~\ref{propt}, we know that there exist $T(x) \in \mathcal{S}^{m-r}\mathrm{QM}[G]$ and a neighborhood $U^{\prime}$
				of $u$
				such that $T(u)=0$,  $u$ is a local minimizer of
				\be  \label{locthm1}
				\left\{ \baray{rl}
				\min & f(x)  \\
				\st &  	T(x)\succeq 0,~x\in U^{\prime},  \\
				\earay \right.
				\ee
				and there exits $ \Theta \in \mathcal{S}_{+}^{m-r}$ such that
				\be  \nn %\label{muil:thm}
				\nabla 	L^{re}(u)=0 , \quad \langle \Theta,  T(u)\rangle=0,
				\ee
				where
				$$
				L^{re}(x) \, \coloneqq \,  f(x)-\langle \Theta,  T(x)\rangle .
				$$
				Furthermore, the  linear mapping $\nabla T(u)$ is regular.
				Let
				$X $ be the $(m-r)$-by-$(m-r)$ symmetric polynomial matrix whose upper triangular part is
				\be \nonumber
				\left[\begin{array}{clcl}
					x_1 &x_{2}&\dots &x_{ m-r } \\
					&x_{m-r+1}&\dots & x_{2(m-r)-1} \\
					& &\ddots & \vdots \\
					& & &x_{\sigma(m-r) } \\
				\end{array}\right],
				\ee
				and denote $y=(x_{\sigma(m-r)+1},\dots,x_n)$.
				Up to an invertible linear coordinate transformation,
				we can generally assume that  $u=0$, and
				%{\small
					%\[
					%\left[\begin{array}{cccc}
						%T_{11} &T_{m-r ~1}&\dots &T_{1 ~m-r} \\
						%&T_{22}&\dots & T_{2~m-r} \\
						%& &\ddots &\\
						%& & & T_{m-r ~m-r}\\
						%\end{array}\right]=\left[\begin{array}{cccc}
						%x_1+t_{11} &x_{m-r+1}+t_{12}&\dots &x_{\sigma(m-r)}+t_{1 ~m-r} \\
						%&x_{2}+t_{22}&\dots & x_{\sigma(m-r)-1}+ t_{2 ~m-r}\\
						%& &\ddots &\\
						%& & & x_{m-r}+t_{m-r ~m-r}\\
						%\end{array}\right],
						%\]
						%}
					\[
					T=X+T^{\prime} .
					\]
					Here, $T^{\prime}$ is a symmetric polynomial matrix and
					every entry of $T^{\prime}$ is a polynomial with all terms
					having degrees at least $2$. Furthermore, one can also assume that
					$\Theta=I_{m-r}$, because otherwise we can replace
					$T$ by $\Theta^{\frac{1}{2}}T\Theta^{\frac{1}{2}}$.
					By Theorem \ref{mat:bhc}, we have that
					%\textcolor{red}{(change $r$ to $s$?)}
					\[
					f=\langle I_{m-r},  T\rangle+f_2(y)+\langle H,T\rangle + s,
					\]
					where the quadratic form $f_2(y)$
					%$$
					%h:=f^{(2)}(0, \ldots, 0, x_{\sigma(m-r)+1}, \ldots, x_n)-\sum_{i=1}^{m-r}  T_{ii}^{(2)}(0, \ldots, 0, x_{\sigma(m-r)+1}, \ldots, x_n)
					%$$
					is positive definite in $y$,
					%By assumption we have
					%$$
					%f(x,y) = \sum_{i=1}^m \langle I,X\rangle + q(y) + \langle H,X\rangle+r_1 ,
					%$$
					%where $r_1$ is the summation of all terms of order at least three, with $C \succ  0$ and
					$H$ is a symmetric polynomial matrix with each entry being
					linear forms, and $s$ denotes the sum of all terms with degrees at least $3$.
					%where
					%\[
					%q(y) = \sum_{j,k=1}^n b_{jk}y_jy_k,
					%\]
					%is a positive definite quadratic form.
					Denote
					\be \label{def:au}
					a_u \, = \, \operatorname{Trace}\Big( (I_{m-r}+(H-T)/2)^2T \Big).
					\ee
					Then, we have that
					\be
					\baray{rcl}
					f-a_u &=&\langle I_{m-r},  T\rangle+f_2(y)+\langle H,T\rangle+s
					-\operatorname{Trace}\Big((I_{m-r}+(H-T)/2)^2T \Big)\\
					&=& f_2(y)+\langle T,T\rangle+s-\frac{1}{4}\operatorname{Trace}((H-T)^2T).\\
					\earay
					\ee
					Equivalently, we have that
					\[
					f \, = \, a_u + f_2(y)+\langle X,X\rangle + s^{\prime},
					\]
					where all terms of $s^{\prime}$ have degrees $3$ or higher.
					Since $T(x) \in \mathcal{S}^{m-r}\qmod{G}$, we know that $a_u \in \qmod{G}$.

					Since the form $f_2(y)+\langle X,X\rangle$ is positive definite,
					there exists a neighborhood $U\subseteq U^{\prime}$ of $0$
					such that $f_2(y)+\langle X,X\rangle+s^{\prime}>0$
					for all $0 \ne x \in U$.
					In the following, we show that the polynomial $a_u$ given in the above satisfies
					\[
					\{x\in \mR^n: a_u(x) \geq 0\} \cap V_{\mR}(f-f_{\min}) \cap U = \{ 0 \} \subseteq S(G).
					\]
					%where  $U$ is a neighborhood of $u$.
					Suppose otherwise there exists $0 \neq q\in U$ such that
					\[
					a_u(q)\geq 0,~f(q)=f_{\min}=0.
					\]
					Then, we can get the contradiction
					\[
					0=f(q)>a_u(q)\geq 0.
					\]		
					
				\end{proof}

				In the following, we give the complete proof of  Theorem \ref{mianthm1}
				(also Theorem~\ref{mianthm1:local}), with the usage of results shown in Sections \ref{sec:eqpmo}, \ref{sec:localsos}.
				
				\bigskip
				\noindent
				{\bf Proof of  Theorem \ref{mianthm1} (also Theorem~\ref{mianthm1:local}).}
				Suppose $u$ is a local minimizer of \reff{nsdp}.	By Theorem \ref{mat:bhc}, we know
				$f - f_{\min}  \in \widehat{\qmod{G}}_u$, i.e.,  there exists a finite set of polynomials
				\[
				g_u=\{g_u^{(1)},g_u^{(2)},\dots,g_u^{(t_u)}\} \subseteq \qmod{G}
				\]
				such that $f$ belongs to the quadratic module in $\widehat{\mathbb{R}[x]}_{u}$
				generated by the image of $g_u$. Here, $t_u$ is a positive integer.
				Since the NDC, SCC and SOSC are assumed to hold at every minimizer of \reff{nsdp},
				these minimizers must be isolated. Hence, $f$ has only finitely many zeros in $S(G)$, say, $u^{(1)},u^{(2)},\dots,u^{(N)}$. Denote the polynomial tuple
				\[
				\phi = \bigcup_{i=1}^N  g_{u^{(i)}}\cup \{ a_{u^{(i)}} \}.
				\]
				Here, the polynomials $a_{u^{(i)}}$ are the analogues of $a_u$ as in \reff{def:au}.
				By %Proposition \ref{finitebas},
				\cite[Proposition 9]{sk09},
				there exist finitely many
				$\psi_1,\psi_2,\dots,\psi_{\ell} \in \qmod{G}$ such that
				\[
				S(G) = \{x\in \mR^n:\,  \psi_1(x) \geq 0, \psi_2(x) \geq 0,\dots, \psi_{\ell}(x) \geq 0\}.
				\]
				Since $\qmod{G}$ is \rev{Archimedean},
				there exists $R>0$ such that $R-\|x\|^2\in \qmod{G}$. Denote
				\be \label{fin:phi}
				\Phi \coloneqq \phi \cup \{\psi_1,\dots,\psi_{\ell}\}\cup \{R-\|x\|^2\} .
				\ee
				Clearly, the quadratic module $\qmod{\Phi}$ is \rev{Archimedean} and finitely generated.
				Since $\Phi \subseteq \qmod{G}$, we have
				\[
				\qmod{\Phi}\subseteq \qmod{G}, \quad
				S(G)=\{x\in \mR^n : \phi(x)\geq 0, \,\, \forall \, \phi \in \Phi \}.
				\]
				By  Theorems \ref{mat:bhc} and
				\ref{exist:a}, one can see that the assumptions in Theorem \ref{scal:bhc} are satisfied
				for the quadratic module $\qmod{\Phi}$ and the set $S(G)$. Hence,
				%the conditions in Proposition \ref{scal:bhc} are all satisfied for the quadratic module $ M$,
				it follows from Theorem \ref{scal:bhc} that $f - f_{\min} \in \qmod{\Phi}$. The inclusion $ \qmod{\Phi}\subseteq \qmod{G}$
				implies  $f - f_{\min} \in \qmod{G}$,
				which completes the proof.

				%
				%In the following, we give the proof of Theorem \ref{mianthm1} (Theorem \ref{mianthm1:local}).
				%
				%\subsection{ Proof of  Theorem \ref{mianthm1} (also Theorem~\ref{mianthm1:local})}

				%
				%In the following, we give the proof of Theorem \ref{mianthm2}
				%(Theorem \ref{mianthm2:local}).
				%

				%\bigskip
				%\bigskip
				\subsection{Proof of  Theorem~\ref{mianthm2} (also Theorem~\ref{mianthm2:local})}\label{proofthm2}
				Without loss of generality, we can assume that  $f_{\min}=0$, for convenience of notation.
				Let $\Phi= \{\phi_1,\dots,\phi_s\}$ be the polynomial set as in \reff{fin:phi}
				in the proof of Theorem \ref{mianthm1}.

				(i) Note that $\qmod{\Phi}\subseteq \qmod{G}$ and $\supp{\qmod{\Phi}+\ideal{f-f_{\min}}}$ %$(M+\ideal{f}) \cap-(M+\ideal{f})$
				is zero dimensional, by Theorem~\ref{scal:bhc}. Since
				\[
				\supp{\qmod{\Phi}+\ideal{f-f_{\min}}}\subseteq  \supp{\qmod{G}+\ideal{f-f_{\min}}},
				\]
				we know $\supp{\qmod{G}+\ideal{f}}$ is also zero dimensional.

				(ii)
				Consider the polynomial optimization problem
				\be  \label{n:pop}
				\left\{ \baray{rl}
				\min & f(x)  \\
				\st &  	\phi(x) \geq  0 \, \, ( \forall \, \phi \in \Phi). \\
				\earay \right.
				\ee
				Note that
				$
				S(G)=\{x\in \mR^n : \phi(x) \geq 0 \,  (\forall \, \phi \in \Phi) \}.
				$
				Hence, the optimal value of \reff{n:pop} is also $f_{\min}$.
				The $k$th order SOS relaxation for solving \reff{n:pop} is
				\be \label{sos:opt:stan}
				\left\{\begin{array}{cl}
					\max & \gamma \\
					\text { s.t. } & f-\gamma \in \mathrm{QM}[\Phi]_{ 2k} .
				\end{array}\right.
				\ee
				The dual optimization of \reff{sos:opt:stan} is the $k$th order moment relaxation:
				\be  \label{mom:opt:stan}
				\left\{ \baray{cl}
				\min &  \langle f, w \rangle  \\
				\st   &L_{\phi}^{(k)}[w] \succeq 0 \,\,  (\forall \, \phi \in \Phi) \\
				& M_k[w] \succeq 0, \\
				&  w_0  =  1,  \,  w \in \mathbb{R}^{\mathbb{N}_{2 k}^{n}} .
				\earay \right.
				\ee
				Let $\vartheta_{k}$, $\vartheta_{k}^{\prime}$
				denote the optimal value of \reff{sos:opt:stan}, \reff{mom:opt:stan}, respectively.
				By Theorem \ref{mianthm1}, we have $f-f_{\min}\in \qmod{\Phi}$, so
				$\vartheta_{k}=\vartheta_{k}^{\prime}=f_{\min}=0 $ for all $k\geq k_0$,
				for some order $k_0$.  Since every minimizer of \reff{mom} also gives a minimizer of the moment relaxation \reff{mom:opt:stan}, by using subvectors, it is enough to show that every minimizer of  \reff{mom:opt:stan} has a flat truncation when $k$ is big enough.
				
				Let $w^{(k)}$ be a minimizer of \reff{mom:opt:stan} at the relaxation order $k$.
				%Note that $\Phi \subseteq  \qmod{G}$. Let $k_0$ be sufficiently large such that
				%$\Phi \subseteq \qmod{G}_{k_0}$.
				%Suppose $w^{(k)}$ is a minimizer of the $k$th order moment relaxation \reff{mom},
				%then $\langle f ,  w^{(k)}\rangle = 0$.  Hence, the truncation $w^{(k)}\mid_{2k-2k_0}$
				%is also feasible for the $(k-k_0)$th order relaxation \reff{mom:opt:stan},
				%so $w^{(k)}\mid_{2k-2k_0}$ is also a minimizer of \reff{mom:opt:stan},
				%when $k$ is sufficiently enough.
				%\textcolor{red}{(Why do we need to do this?
					%The $< \cdot, \cdot >$ avoids these issues automatically?)}
				%For notational convenience,
				%we identify $w^{(k)}\mid_{2k-2k_0}$ as $w^{(k)}$.
				Denote
				\[
				Q \, \coloneqq \,  \qmod{\Phi}+\ideal{f-f_{\min}}.
				\]
				Note that $Q$ is also \rev{Archimedean} and the intersection $J:=Q \cap-Q$
				is an ideal. By item (i),
				%the coordinate ring $\frac{\mathbb{R}[x]}{J}$ has dimension 0.
				the ideal $J$ is zero dimensional.
				%\textcolor{red}{(In the item (i), don't we have already shown $\dim J  = 0$?
					%Why do we cite Proposition~\ref{scal:bhc} again?)}
				Denote the zero set
				$$
				Z  \coloneqq  \left\{\begin{array}{ll}
					x \in \mathbb{R}^n \left| \begin{array}{l}
						f(x)=0, \\
						\phi(x) \geq 0 \,\, (\forall \, \phi \in \Phi)
					\end{array}\right.
				\end{array}\right\} .
				$$
				Clearly, $Z$ is the set of all minimizers of  \reff{nsdp}.
				The SOSC implies that the minimizers of \reff{nsdp} are isolated.
				Since $S(G)$ is compact, the set $Z$ is finite and
				the vanishing ideal $I(Z)$ is zero dimensional.
				Let $\left\{h_1, \ldots, h_r\right\}$ be a Gr\"{o}bner basis of $I(Z)$
				with respect to a total degree ordering.
				Since $h_t \equiv 0$  on $Z$ $(t=1,\dots,r)$ and $J$ is zero dimensional,
				by \cite[Corollary~7.4.2]{marshall2008positive},
				there exist $\ell\in \mathbb{N}$, $\eta \in \mR[x]$ and $\psi_0, \psi_i \in \Sigma[x]$ such that
				$$
				h_t^{2 \ell}+\eta  f+
				\sum\limits_{i=1}^s \psi_i \phi_i+\psi_0=0 .
				$$
				When $2 k$ is bigger than the degrees of all above polynomials, we have
				\be \label{realzero}
				\langle h_t^{2 \ell}, w^{(k)}\rangle+\langle\eta  f, w^{(k)}\rangle+\sum\limits_{i=1}^s\langle \psi_i \phi_i, w^{(k)}\rangle+\langle\psi_0, w^{(k)}\rangle=0.
				\ee
				Since $f\in \qmod{\Phi}$,   there exits   $\psi_0^{*}$, $\psi_i^{*}  \in \Sigma[x]_{2k_0}$ such that
				\[
				\deg(\psi_i^{*}\phi_i)\leq 2k_0 ~~(i=1,\dots,s ),
				\]
				\be \label{per}
				f=\sum\limits_{i=1}^s \psi_i^{*} \phi_i+\psi_0^{*} .
				\ee
				Applying the bi-linear operation $\langle \cdot, w^{(k)}\rangle$
				to \reff{per} for $k> k_0$, we get
				\be
				0=\sum\limits_{i=1}^s \langle \psi_i^{*} \phi_i, w^{(k)}\rangle+\langle \psi_0^{*}, w^{(k)}\rangle.
				\ee
				Since  $\psi_0^{*}$,
				$\psi_j^{*} $ are SOS, the constraints
				\[
				L_{\phi_{i}}^{(k)}[w] \succeq 0~(i=1,\dots,s), \quad M_k[w] \succeq 0
				\]
				imply that
				\[
				\langle \psi_i^{*} \phi_i, w^{(k)}\rangle\geq0 ~(i=1,\dots,s) , ~
				\langle \psi_0^{*}, w^{(k)}\rangle \geq0.
				\]
				Hence, we  have that
				\[
				\langle \psi_i^{*} \phi_i, w^{(k)}\rangle =0 ~(i=1,\dots,s) , ~
				\langle \psi_0^{*}, w^{(k)}\rangle =0.
				\]
				%By Lemma 2.5 of \cite{nie2013certifying}, we can get for $k$ big enough,
				%\be \label{asym1}
				%\lim\limits_{\epsilon\rightarrow 0}\langle \psi_j^{\epsilon} c_j\eta , w^{(k)}\rangle=0, ~\lim\limits_{\epsilon\rightarrow 0}\langle \psi_0^{\epsilon}\eta , w^{(k)}\rangle=0.
				%\ee
				By Lemma 2.5 of \cite{nie2013certifying}, we can get for $k$ big enough,
				\be
				\langle \psi_i^{*}
				\phi_i\eta , w^{(k)}\rangle=0~(i=1,\dots,s),\,\, \langle \psi_0^{*}\eta, w^{(k)}\rangle=0.
				\ee
				Hence, we can get
				$$
				\langle \eta  f, w^{(k)}\rangle
				= \sum\limits_{i=1}^s \langle \psi_i^{*}
				\phi_i\eta , w^{(k)}\rangle+\langle \psi_0^{*}\eta , w^{(k)}\rangle
				=  0.
				$$
				It follows from \reff{realzero} that
				\be\label{realzero1}
				\langle h_t^{2 \ell}, w^{(k)}\rangle+\sum\limits_{i=1}^s\langle \psi_i \phi_i, w^{(k)}\rangle+\langle\psi_0, w^{(k)}\rangle=0.
				\ee
				Since $h_t^{2 \ell}$, $\psi_i$ ($i=1,\dots,s $), $\psi_0$ are SOS, we can easily verify that
				\[
				\langle h_t^{2 \ell}, w^{(k)}\rangle\geq 0,
				\langle\psi_0, w^{(k)}\rangle \geq0,~\langle \psi_i \phi_i, w^{(k)}\rangle \geq 0 ~ (i=1,\dots,s).
				\]
				Hence, $\langle h_t^{2 \ell}, w^{(k)}\rangle=0$ and
				$h_t \in \operatorname{ker} M_k[w^{(k)}]$
				(see \cite{LLR08,Lau09} or \cite[Lemma~4.2.6]{niebook}).
				
				\bigskip
				Next, we show that $\left.w^{(k)}\right|_{2 k-2}$ is flat.
				Note that $Z$ is finite. By Theorem 2.6 of \cite{Lau09},  the quotient space $\mathbb{R}[x] / I(Z)$
				is finite-dimensional, and
				let $\left\{q_1, \ldots, q_{\ell} \right\}$ be a  basis of $\mathbb{R}[x] / I(Z)$.
				For arbitrary $\alpha \in \mathbb{N}^n$, there exist $\beta_i\in \mR$, $p_t\in \mR[x]  $ such that
				$$
				x^\alpha=\sum_{i=1}^{\ell} \beta_i q_i+\sum_{t=1}^r p_t h_t,
				\quad \operatorname{deg}\left(p_t h_t\right) \leq|\alpha|.
				$$
				When $ |\alpha| \leq k-1$, we have
				\[
				p_t h_t \in \operatorname{ker}  M_k[w^{(k)}], \quad
				x^\alpha-\sum_{i=1}^{\ell} \beta_i q_i \in \operatorname{ker} M_k[w^{(k)}],
				\]
				since $h_t \in \operatorname{ker}  M_k[w^{(k)}]$.
				Let
				\[
				d_q:=\max\limits_{i \in [\ell]} \operatorname{deg} (q_i).
				\]
				Note that each $\alpha$th column  of $ M_k[w^{(k)}]$
				is a linear combination of its first $d_q$ columns for $d_q+1 \leq|\alpha| \leq k-1$. It implies that when $k-1-d_G \geq d_q$, we have
				\[
				\operatorname{rank} M_{k-1-d_G}[w^{(k)}]=\operatorname{rank} M_{k-1}[w^{(k)}].
				\]
				Thus,  $w^{(k)}$ has a flat truncation when $k$ is big enough.

				\section{Conclusions}
				\label{sec:dis}
				
				Under the \rev{Archimedean property}, we show that the  matrix Moment-SOS hierarchy
				of relaxations \reff{sos}--\reff{mom} has finite convergence,
				if the nondegeneracy condition, strict complementarity condition,
				and second order sufficient condition hold at every minimizer of \reff{nsdp}.
				Furthermore, under the same assumptions, we prove that every minimizer of
				the moment relaxation \reff{nsdp} has a flat truncation,
				when the relaxation order is sufficiently large.
				These results give connections between nonlinear semidefinite optimization theory
				and polynomial matrix optimization.

				\bigskip
				\noindent
				{\bf Acknowledgements.}
				The authors would like to thank the editors and anonymous referees
				for their fruitful comments and suggestions.
				They also thank Claus Scheiderer very much
				for personal communications about nonnegative polynomials.

				%%%%%%%%%%%%%%%%%%%%%
				\iffalse
				\ter{
					In practice,  the Moment-SOS relaxations often have finite convergence at the very low degree relaxations.
					For polynomial optimization, the geometric exactness at a given order is recently studied by Didier Henrion \cite{hd23}. While there exist nice results  for quadratic programming, how to provide suitably sufficient conditions for the finite convergence of low degree relaxations is a very important future work.
				}
				\textcolor{blue}{(It is not necessary to write the above.
					We can drop them if there are no good OPEN questions.)}
				
				\fi
				%%%%%%%%%%%%%%%%%%%%%%%%
				%\ter{
					%It is worth pointing out that all results in this
					%paper can be  extended to deal with separate equality
					%constraints by adapting the techniques recently developed in  \cite{hny2}, but we omit them for simplicity since the proof would be tedious.
					%}

\end{document}